\journal{Computers \& Mathematics with Applications}
\newtheorem{theorem}{Theorem}[section]
\newtheorem{corollary}[theorem]{Corollary}
\newtheorem{lemma}[theorem]{Lemma}
\newtheorem{proposition}[theorem]{Proposition}
\newtheorem{definition}[theorem]{Definition}
\theoremstyle{remark}
\newtheorem{example}[theorem]{Example}
\newtheorem{remark}[theorem]{Remark}
\numberwithin{equation}{section}
\DeclareMathOperator{\ran}{range}
\def\RR{\mathbb R}
\def\CC{\mathbb C}
\def\NN{\mathbb N}
\def\ZZ{\mathbb Z}
\def\TT{\mathbb T}
\def\DD{\mathbb D}
\def\wt{\widetilde}
\def\wh{\widehat}
\def\ll{\ell}
\def\ep{\varepsilon}
\def\la{\lambda}
\def\vphi{\varphi}
\def\ga{\gamma}
\def\De{\Delta}
\def\X{\mathcal{X}}
\def\B{\mathcal{B}}
\def\Bo{\mathcal{B}_0}
\def\L{\mathcal{L}}
\def\W{\mathcal{W}}
\def\U{\mathcal{U}}
\def\Ssp{\mathcal{S}}
\def\A{\mathcal{A}}
\def\M{\mathcal{M}}
\def\ShB{S_{B,\mathrm{back}}}
\def\ShEl{S_{\mathrm{forw}}}
\def\r{\mathrm{r}}
\def\IS{\Gamma}
\def\IO{L}
\def\T{\mathrm{T}}
\def\Nper{N}
\def\c{\mathrm{c}}
\def\t{\mathrm{t}}
\def\Ufd{\mathcal{V}}
\def\E{\mathfrak{E}}
\def\N{\mathfrak{N}}
\def\IOfd{\mathfrak{L}}
\def\Oz{\mathit{0}}
\begin{document}

\begin{frontmatter}

\title{Structured stability radii and exponential stability tests for Volterra difference systems\tnoteref{tfn1,tfn2}}

\tnotetext[tfn1]{The authors were partially supported by the Pacific Institute for
Mathematical Sciences and by the first author's NSERC research grant.}
\tnotetext[tfn2]{The second author is grateful to the University of Calgary for hospitality.}

\author[eb]{E.~Braverman}
\ead{maelena@math.ucalgary.ca}
\address[eb]{Department of Mathematics and Statistics, University of Calgary,
2500 University Drive N. W.,\\ Calgary, AB T2N 1N4, Canada.}
\author[ik]{I.~Karabash\corref{cor1}}
\address[ik]{Institute of Applied Mathematics and Mechanics,
R. Luxemburg str. 74, \\ Donetsk, 83114, Ukraine}


\begin{abstract}
Uniform exponential (UE) stability of linear difference equations with infinite delay is studied
using the notions of a stability radius and a phase space. The state space $\X$ is supposed to be an abstract Banach space.
We work both with  non-fading phase spaces $c_0 (\ZZ^-, \X)$ and $\ll^\infty (\ZZ^-, \X)$
and with exponentially fading phase spaces of the $\ll^p$ and $c_0$ types.
For equations of the convolution type, several criteria of UE stability are obtained in terms of the Z-transform
$\wh K (\zeta)$ of the convolution kernel $K (\cdot)$, in terms of the input-state operator and of the resolvent (fundamental)
matrix. These criteria do not impose additional positivity or compactness assumptions on coefficients $K(j)$.
Time-varying (non-convolution) difference equations are studied via structured
UE stability radii $\r_\t$ of convolution equations. These radii correspond to a feedback scheme with delayed output and
time-varying disturbances. We also consider stability radii $\r_\c$ associated with a time-invariant disturbance operator,
unstructured stability radii, and stability radii corresponding to delayed feedback.
For all these types of stability radii two-sided estimates are obtained. The estimates from above are given
in terms of the Z-transform $\wh K (\zeta)$, the estimate from below via
the norm of the input-output operator. These estimates turn into explicit formulae if the
state space $\X$ is Hilbert or if disturbances are time-invariant.
The results on stability radii are applied to obtain various exponential stability tests
for non-convolution equations. Several examples are provided.

\begin{keyword} discrete Volterra equations \sep
unbounded delay \sep
infinite delay \sep
uniform exponential stability \sep
stability radius \sep
structured perturbations \sep
phase spaces \sep
uncertain feedback \sep
delayed output \sep
delayed feedback

\MSC[2010] 39A30, 39A10, 39A06, 39A12
\end{keyword}
\end{abstract}
\end{frontmatter}

\section{Introduction}

The aim of the paper is to find or, in more involved cases, to estimate exponential stability radii for linear
convolution difference systems  with infinite delay
\begin{equation}
x(n+1) = \sum_{j=0}^{+\infty} K(j) x (n-j) ,  \quad n \geq 0, \label{e Keq intro}
\end{equation}
and then to apply obtained results to
the study of exponential stability of the Volterra difference system
\begin{equation}
x(n+1) = \sum_{j=0}^{+\infty} Q (n,j) x (n-j) ,  \quad n \geq 0, \label{e Veq}
\end{equation}
with time-varying (i.e., $n$-depending) coefficients $Q(n,j)$.
Here $x(\cdot)$ is a discrete function from $\ZZ$ to a
(complex) Banach space $\X$. $\X$ is called \emph{the state space}. The coefficients
$Q (n,j)$ belong to the space $\L (\X)$ of bounded linear operators on $\X$.

Though Volterra difference systems became an object of active investigations only in last two decades,
they have been appearing in various applications at least since 1930s (see the survey \cite{KC-VT-M03}).
These systems naturally arise in the renewal theory  \cite{F68}, in the numerical studies of
Volterra integral equations \cite{CJRV91}, and in the theory of differential equations
with delays \cite{FMN04_JMJ,FMN04_JDEA} (for a list of other applications see \cite{KC-VT-M03}).
Discretization procedures similar to that of \cite{P80,FMN04_JMJ,FMN04_JDEA} applied to
delayed differential and partial differential equations lead to Volterra difference equations with
infinite-dimensional state spaces $\X$.

Following \cite{M97,CP00,BK10}, we consider equations (\ref{e Keq intro}) and (\ref{e Veq}) in phase space settings.
By $x_n$ the semi-infinite prehistory
sequence $ \left\{ \dots , x (n+m), \dots, x(n-1), x(n) \right\}_{m \le 0}$ is denoted. We suppose that the sequence of initial conditions
$x_0 = \{ x(n+m)\}_{m=-\infty}^{0}$ (i.e., the prehistory of the initial time point $n=0$) belongs to
a certain \emph{phase space} $\B$. In this paper, $\B$ is either one of
exponentially weighted $\ll^p $-spaces $\B^{p,\ga}$
with the norms
$$|x_0|_{\B^{p,\ga}} = \left( \sum_{m=-\infty}^{0} |e^{\ga m} x (m) |_\X^p  \right)^{1/p}  , \ \ \  1 \le p < \infty ,~~~
|x_0|_{\B^{\infty,\ga}} =  \sup_{m \le 0 } |e^{\ga m} x (m) |_\X , \ \ \ p = \infty,
$$
or one of exponentially weighted $c_0 $-spaces $\Bo^{\infty,\ga}$ with the norms of $\B^{\infty,\ga}$
(see the definitions in Section \ref{ss Ph sp aux op}).
Then, all the pre-histories $x_n$ for $n \ge 0$ belong to the same
phase space $\B$.
The system
\begin{equation}
x(n+1) = Q (n) x_n ,  \quad n \geq 0, \label{e h}
\end{equation}
is said to be defined on a phase space $\B$ if $Q (n)$ for all $n \ge 0 $ belong to the space $\L (\B,\X)$
of bounded linear operators from $\B$ to $\X$.
It is clear that system (\ref{e Veq}) can be written in the form (\ref{e h})
if $Q(n,j)$ satisfy certain assumptions that depend on the choice of the phase space $\B$.
For instance, (\ref{e Veq}) is defined on $\B^{p,\ga}$ whenever
$\sum_{j=0}^{+\infty} \|e^{\ga j} Q (n,j) \|_{\X \to \X}^{p'} < \infty$, where $p'$ is the H\"{o}lder's conjugate
of $p$ (i.e., $1/p+1/p'=1$). This is archived by putting $Q (n) x_n = \sum_{j=0}^{+\infty} Q (n,j) x (n-j)$, where
the convergence of the series is understood in the sense of the norm topology of $\X$.
With the same reservations (see also the discussion in Section \ref{ss Ex and Rem conv}),
convolution system (\ref{e Keq intro}) in the phase space settings takes the form of the system
$x (n+1) = K x_n$ with a time invariant coefficient $K \in \L(\B,\X)$.

Usually, in the literature  the phase spaces $\B^{\infty,\ga}$ are used. In \cite{M97}, such spaces are denoted by $\B^\ga$.
When $\ga>0$, these spaces are called (exponentially) fading because of exponentially decaying term $e^{\ga m}$ in the norms.
Following the logic of this terminology, it is natural to say that the phase spaces $\B^{p,\ga}$ with $\ga \leq 0$ are \emph{non-fading}.

In this paper we consider two types of uniform exponential (UE) stability for system (\ref{e h}):
UE stability in $\X$ with respect to (w.r.t.) the phase space $\B$, and UE stability in the sense of resolvent matrix.
The definitions are given in Section \ref{ss stab} in accordance with  \cite{HMN91,CKRV00,FMN04_JMJ,BK10}.
Note that usually exponential stability for Volterra difference systems (\ref{e Veq}) is understood in the following way.

\begin{definition}[see e.g. \cite{EM96}] \label{d UE usual}
System (\ref{e Veq}) is called exponentially stable if there exist constants $C, \nu >0 $ such that,
for any $\tau,s \ge 0 $, the solution $x (n)$ to the problem
\begin{eqnarray*}
x(n+1) & = & \sum_{j=0}^{n+s-\tau} Q (n,j) x (n-j) ,  \quad n \geq \tau, \\
\{ x (\tau - s) , \dots , x (\tau - 1), x (\tau ) \} & = & \{ \vphi^{[-s]}, \dots , \vphi^{[-1]}, \vphi^{[0]} \}
\end{eqnarray*}
with arbitrary initial data $\{ \vphi^{[-j]} \}_{j=0}^{s} \in \X^{s+1}$ satisfies
\begin{equation} \label{e Def es}
 |x(n)|_\X \leq  C e^{-\nu (n-\tau)} \max_{0 \le j \le s} | \vphi^{[-j]} |_\X \ \text{ for all }
n \geq \tau .
\end{equation}
\end{definition}

This type of exponential stability is essentially equivalent to the UE stability in $\X$ w.r.t. $\Bo^{\infty,0}$,
see Remark \ref{r EM=SwrtBo} (1). That is why we will pay special attention to the phase space $\Bo^{\infty,0}$
throughout the paper.

We study stability radii associated with the following perturbation of system (\ref{e Keq intro})
\begin{equation} \label{e K per}
x(n+1)=\sum_{j=0}^{+\infty} \ K(j) \ x(n-j) + D \ N (n) \ E x_n ,
\end{equation}
where $E \in \L (\B,\U_1)$ and  $D \in \L (\U_2 , \X)$. An auxiliary Banach space
$U_2$ ($U_1$) is called the \emph{input  (resp., output) space}.
Perturbed system (\ref{e K per}) can be interpreted as a \emph{feedback system with delayed output}, see Fig.\ref{fig1}.
Note that the output $y (n) = E x_n$ depends on the prehistory $x_n = \{ x (n+m) \}_{m=-\infty}^{0}$ (delayed output)
and that the input $v(n)$ is connected with the output by $ v(n) = N(n) y (n)$, where
$N (n) \in \L (\U_1,\U_2)$ are operators of uncertain feedback (or disturbance operators).

\begin{figure}[h]
\centering

\setlength{\unitlength}{0.0007in}
\begingroup\makeatletter\ifx\SetFigFont\undefined%
\gdef\SetFigFont#1#2#3#4#5{%
  \reset@font\fontsize{#1}{#2pt}%
  \fontfamily{#3}\fontseries{#4}\fontshape{#5}%
  \selectfont}%
\fi\endgroup%
{\renewcommand{\dashlinestretch}{30}
\begin{picture}(7224,2439)(0,-10)
\path(912,2412)(6312,2412)(6312,1212)
    (912,1212)(912,2412)
\path(6312,1812)(7212,1812)(7212,312)(4512,312)
\path(2712,612)(2712,12)(4512,12)
    (4512,612)(2712,612)
\path(2712,312)(12,312)(12,1812)
    (912,1812)(762,1887)
\path(762,1737)(912,1812)
\path(4662,387)(4512,312)(4662,237)
\put(237,2037){\makebox(0,0)[lb]{{\SetFigFont{12}{12}{\rmdefault}{\mddefault}{\updefault}$v(n)$}}}
\put(6537,2037){\makebox(0,0)[lb]{{\SetFigFont{12}{12}{\rmdefault}{\mddefault}{\updefault}$y(n)$}}}
\put(2880,280){\makebox(0,0)[lb]{{\SetFigFont{10}{10}{\rmdefault}{\mddefault}{\updefault} $v(n)=N(n) y (n)$}}}
\put(1362,1750){\makebox(0,0)[lb]{{\SetFigFont{10}{10}{\rmdefault}{\mddefault}
{\updefault}${\displaystyle x(n+1)=\sum_{j=0}^{+\infty} K(j)x(n-j)+Dv(n)}$}}}
\put(1362,1412){\makebox(0,0)[lb]{{\SetFigFont{10}{10}{\rmdefault}{\mddefault}{\updefault}$~~y(n)
= Ex_n$}}}
\end{picture}
}
\caption{Feedback interpretation of system (\ref{e K per})}
\label{fig1}
\end{figure}

\emph{The (UES) stability radius} $\r_\t$ is, by definition, a sharp bound on the norms of feedback operators $N (n)$ that
ensures UE stability of the perturbed system (\ref{e K per}), see Section \ref{ss r def} for details.
If the feedback operator does not depend on discrete time $n$, $N(n) \equiv \Delta \in \L (\U_1,\U_2)$, one gets \emph{the stability radius w.r.t.
time-invariant structured perturbations} $D \Delta E$. This radius is denoted $\r_\c$. For systems with bounded delay,
more information about structured stability radii and feedback systems
can be found in \cite{PH93,WH94} and references therein.

It seems that, for discrete systems with infinite delay,
the study of stability radii and of very kindred problems of robust stability
was started in the last decade \cite{SB03,SB04,K06,MN08,NNShM09}. This theory is not enough developed yet.
For convolution system (\ref{e Keq intro}), asymptotic stability radii corresponding
to time-invariant structured perturbations were estimated from above in \cite{MN08,NNShM09}.
These papers assume that the coefficients $K(j)$ are either positive operators
on a finite dimensional state space $\X$ \cite{NNShM09}, or are positive compact operators on a complex
 Banach lattice $\X$ \cite{MN08}. Under several additional positivity and compactness assumptions on the perturbations,
 the stability radius is expressed in terms of the Z-transform $\wh K (1)$ of $K (\cdot)$ taken
at the point  $\zeta =1$.

\emph{The main points and results of the present paper are:}

\begin{itemize}
\item Without positivity or compactness assumptions, \emph{two-sided estimates for time-varying exponential stability radii}
$\r_\t$ of convolution system (\ref{e Keq intro})
 are obtained (Theorem \ref{t r ga>0}, Proposition \ref{p r=0},
and Theorem \ref{t r Bo}). We work both with the exponentially fading phase spaces 
and with non-fading phase spaces
$\Bo^{\infty,0}$ and $\B^{\infty,0}$ (presently, the authors do not know any applications of non-fading phase spaces
$\B^{p,\ga}$ with $\ga <0$).
The estimate from above is given in terms of the Z-transform $\wh K (\zeta)$ of the convolution kernel $K (\cdot)$,
the estimate from below via the norm of the input-output operator $ \IO_K $. These results can be seen as an analogue
of the stability radii theory for first order systems, see e.g. \cite{WH94}.

\item For time-invariant radii $\r_\c$, an explicit formula in terms of Z-transform of $K (\cdot)$ is given.
In the case of a Hilbert state space $\X$, we have shown that the same formula is valid for \emph{time-varying} exponential stability radii $\r_\t$
(see formula (\ref{e r=}) and Theorem \ref{t r Bo}).

\item The above mentioned results are used to study unstructured stability radii (Section \ref{s unstr}) and stability radii corresponding to
a feedback scheme with delayed feedback (Section \ref{ss Del feedback}).

\item As a by-product, in formula (\ref{e r=}) and Corollary \ref{c |I-whK|=|IS|},
we establish connections between the norms of transfer functions and the norms of
input-output and unstructured input-state operators.
(The authors believe that such formulae can be obtained in a more straight way. Similar results
are well known for first order system, see e.g. \cite{WH94}.)

\item The results on stability radii are used  to obtain various exponential stability tests for time-varying
Volterra difference systems (\ref{e Veq}), see Section \ref{ss Stab tests}. It seems that at least some of these tests are new
(related problems were actively discussed e.g. in \cite{CKRV00,E09,BK10}).

\end{itemize}

The method used in this paper is a development of that of our previous paper \cite{BK10} and is based on reduction of
system (\ref{e h}) to a first order system $x(n+1)=\A(n)x(n)$. For the study of stability radii w.r.t.
non-fading phase spaces $\Bo^{\infty,0}$ and $\B^{\infty,0}$, we suggest a reduction in two steps:
from systems in non-fading phase spaces to system in exponentially fading phase spaces, and then to first order systems
(see Sections \ref{ss reduction} and \ref{ss p t r Bo}).
To perform this procedure, we fill two following lacunae in the theories of first order and convolution systems:

\begin{itemize}

\item In Section \ref{ss Stab first ord},  we consider first order systems and extend the estimate
$\r_\t \geq \| \IO_\A \|_{\ll^2 (\U_2) \to \ll^2 (\U_1)}^{-1}$
obtained in \cite{WH94} to $\r_\t \geq \| \IO_\A \|_{\ll^q (\U_2) \to \ll^q (\U_1)}^{-1}$ with arbitrary $1\leq q \leq \infty$
(here $\r_\t$ and $\IO_\A$ are the stability radius and the input-output operator corresponding to the first order system, respectively). This extension occurred to be
essential for the study of unstructured stability radii for the convolution system
and stability radii corresponding to delayed feedback.

\item In Section \ref{ss crit conv}, criteria of UE stability of system (\ref{e Keq intro})
are obtained without the assumption of compactness of coefficients $K(j)$.
The usual assumptions of summability of norms $\|K (j) \|_{\X \to \X}$ is also weakened,
for details see the discussion in Section \ref{ss Ex and Rem conv}. One of key tools of the proposed reduction method
is Theorem \ref{t UEScr ga=0}, which shows that the UE stability of convolution system (\ref{e Keq intro}) w.r.t. the non-fading phase spaces
$\Bo^{\infty,0}$ and  $\B^{\infty,0}$ is equivalent to that w.r.t. fading phase spaces $\B^{p,\ga}$
with small positive $\ga$. Continuing the program of \cite{BK10}, we also obtain an
 exponential stability criterion of Bohl-Perron type for system (\ref{e Keq intro}) in $\B^{\infty,0}$,
 see Corollary \ref{c BPcr ga=0}.
It seems that, for time-varying systems in $\B^{\infty,0}$, finding of similar criteria is still an open problem.
\end{itemize}

Another key point of the present paper is the use of phase spaces $\B^{2,\ga}$. The spaces $\B^{2,\ga}$
are Hilbert spaces whenever the state space $\X$ is Hilbert. This fact, in combination with embedding (\ref{e emB})
of phase spaces, allows us to give an explicit expression for time-varying radii $\r_\t$ in formula (\ref{e r=}), Theorem
\ref{t r Bo}, and Corollaries \ref{c unstr}, \ref{c |I-whK|=|IS|}, \ref{c del feedback ga>0}.

The paper is organized as follows. After introducing notations and basic stability definitions in Section \ref{s Prelim},
we present stability results concerning convolution system (\ref{e Keq intro}) in Section \ref{s conv}.
Section \ref{ss Ex and Rem conv} provides examples to these results and discusses connections with previous
studies \cite{EM96,FMN04_JMJ,FMN04_JDEA,MN05}. In Section \ref{s pert},  perturbed systems are considered:
after introducing perturbation types, stability radii, and input-output operators in Section \ref{ss r def},
the stability radii are estimated in Section \ref{ss radii est}.
The proofs of two main results of Section \ref{s pert}, Theorems \ref{t r ga>0} and
\ref{t r Bo}, are given in Section \ref{s proof radii}, which constitutes the main technical part of the paper.
Section \ref{s unstr} deals with the important special case of unstructured perturbations.
Section \ref{s appl} presents some applications and examples which illustrate the obtained criteria
and estimates: various stability tests for time-varying Volterra difference systems are derived in
Section \ref{ss Stab tests}, stability radii associated with delayed feedback are considered in Section \ref{ss Del feedback},
and, finally, Section \ref{ss ex nonpos} provides an example of calculation of stability radii for a non-positive system.

\section{Notation and basic definitions \label{s Prelim}}

We use the convention that the sum equals zero if the lower index exceeds
the upper index.

For a set $S$ in a normed space, $\overline{S} $ is its closure.
The following sets of complex numbers are used: $\TT:=\{\zeta \in \CC : |\zeta| =1 \}$,
$\DD (\varrho):=\{ \zeta \in \CC : |\zeta|<\varrho \}$, and $\overline{ \DD (\varrho) } := \{ \zeta \in \CC : |\zeta| \leq \varrho \}$.
By $\ZZ$ and $\ZZ^+$ ($\ZZ^-$), the sets of all integers and all nonnegative
(resp., nonpositive) integers are denoted. We write $\ZZ^+_\tau$ for the infinite interval of
integer numbers in $[\tau, +\infty)$. So $\NN = \ZZ_1^+$.

\subsection{Phase spaces, Z-transform, auxiliary operators and functions}
\label{ss Ph sp aux op}

Let $\U$, $\U_1$, $\U_2$ be Banach spaces.
The norm in $\U$ is denoted by $| \cdot |_{\U}$.
Then $\Ssp ( \U )$ ($\Ssp_\pm ( \U )$)
denotes the vector space of all
\emph{discrete functions} $v:\ZZ \to \U$ (resp., $v:\ZZ^\pm \to \U$). Further,
$\L (\U_1 , \U_2 )$ denote the Banach
space of bounded linear operators from $\U_1 $ to $\U_2$, $\| \cdot \|_{\U_1 \to \U_2}$ is the corresponding norm.

The zero vector of a vector space $\W$ is denoted by $0_\W$,
the identity (zero) operator in $\W$ by $I_\W$ (resp., $\Oz_\W$).

An operator $G \in \L(\U) :=\L (\U,\U)$ is called \emph{boundedly invertible} if it is invertible and $G^{-1} \in  \L(\U)$.
The \emph{kernel} of an operator $G \in \L(\U)$ is denoted by
$\ker G := \{ u \in \U : Gu = 0_\U\}$, and the \emph{image} of $G$ is
\[
\ran G := \{ u \in \U : u = Gv \text{ for certian } v \in \U \}.
\]
The duals of the space $\U$ and of the operator $G$ are denoted by
$\U^*$ and $G^*$.

We will use the standard  Banach spaces $ \ll^p ( \U ) = \ll^p (\ZZ^+, \U ) $ of discrete $\U$-valued
$\ll^p$-functions (so $ \ll^p ( \U ) \subset \Ssp_+  ( \U ) $).


The (unilateral) \emph{Z-transform} of a discrete function $ u : \ZZ^+ \to \U$
is understood as the power series
\begin{equation} \label{e Ztr}
\wh u (\zeta) = \sum_{j=0}^{+\infty} \zeta^j u(j)
\end{equation}
and, simultaneously, as the corresponding $\U$-valued function defined on its set of convergence in $\CC$, see e.g. \cite{K81}.
This definition is common for Geophysics. In some papers, the Z-transform of $u$ is defined as $\wh u  (\zeta^{-1})$, see e.g. \cite{EM96}.
These definitions are equivalent, but the first is more convenient for us since,
if \emph{the corresponding convergence radius}
\begin{equation} \label{e R}
R [\wh u] := \left( \limsup_{j\to +\infty} | u (j) |_{\U}^{1/j} \right)^{-1}
\end{equation}
is positive, then $\wh u$ is analytic in the open disc $\DD (R[\wh
u]) $.

A function $u : \ZZ^+ \to \U $ is said to \emph{decay exponentially} if
$ | u (j) |_\U \leq C e^{-\gamma j}$ for some $\gamma , C > 0$ (this is equivalent to $R [\wh u]>1$).

Let a (nontrivial) Banach space $\X$ be our state space.
By $\X^{\ZZ^-}$ we denote the vector space of semi-infinite tuples $\vphi = \{ \vphi^{[m]} \}_{m=-\infty}^{0}$
with elements $\vphi^{[m]}$ in $\X$ and indices $m$ in $\ZZ^-$.
We will say that $\vphi^{[m]}$ is \emph{the m-th coordinate} of
$\vphi$. The standard notation where $\Ssp_- (\X)$ is used instead of $\X^{\ZZ^-}$, see e.g. \cite{M97}, is
inconvenient in the context of the reduction method used in the present paper.

For  $m \in \ZZ^- $ we define \emph{the coordinate operator}
$ P_m : \X^{\ZZ^-}  \to \X$ by
$P_m \vphi = \vphi^{[m]} $.
The operator-valued matrix corresponding to $P_m$ is the row matrix $ ( \dots, \Oz_\X, I_\X, \Oz_\X, \dots,  \Oz_\X)$
with the only non-zero entry at $m$-th position.
The transpose column matrix defines the operator $ P_m^\T :\X \to \X^{\ZZ^-}$, i.e.,
\begin{equation} \label{e PmT}
 ( P_m^\T \psi)^{[j]}  = \left\{ \begin{array}{rr}
\psi , & j = m \\
0_\X  , & j \neq m
\end{array} \right. , \qquad \psi \in \X .
\end{equation}

A linear subspace $\B$ of $\X^{\ZZ^-}$ satisfying a certain set of axioms
is called a phase space (see e.g. \cite{M97}).
We will not discuss those axioms since we consider only exponentially weighted $\ll^p$- and $\c_0$-type phase spaces:
\begin{eqnarray*}
\B^{p,\ga} & := & \left\{ \ \{ \vphi^{[m]} \}_{m=-\infty}^{0} \in \X^{\ZZ^-} \ :
\ | \vphi |_{\B^{p,\ga}} :=
\left( \sum_{m=-\infty}^{0} |e^{\ga m} \vphi^{[m]} |_\X^p  \right)^{1/p} \!\! < \infty
\right\},
\quad 1 \leq p < \infty , \\
\B^{\infty,\ga} & := & \left\{ \ \{ \vphi^{[m]} \}_{m=-\infty}^{0} \in \X^{\ZZ^-} \ :
\ | \vphi |_{\B^{\infty,\ga}} := \sup_{m \in \ZZ^-} |e^{\ga m} \vphi^{[m]} |_\X  < \infty \right\} , \\
\Bo^{\infty,\ga} & := & \{ \vphi \in \B^{\infty,\ga} \ : \ \lim_{m\to -\infty} | e^{\ga m} \vphi^{[m]} |_\X = 0  \} ,
\quad \ \quad | \cdot |_{\Bo^{\infty,\ga}} := | \cdot |_{\B^{\infty,\ga}} .
\end{eqnarray*}
In the notation of \cite{M97}, $\B^{\infty,\ga}$ is $\B^{\ga}$.
The spaces $\B^{p,\ga}$ with $p \in [1,\infty)$ were considered in \cite{BK10}.
We will systematically use the fact that
if $\X$ is a Hilbert space, then so are $\B^{2,\ga}$.

The considered class of phase spaces is totally ordered by the continuous embedding:
\begin{equation} \label{e emB}
 \B^{\infty,\ga_{\scriptstyle 0}} \subset \B^{p_{\scriptstyle 0},\ga} \subset \B^{p_{\scriptstyle 1},\ga} \subset
 \Bo^{\infty,\ga} \subset \B^{\infty,\ga} \subset \B^{1,\ga_{\scriptstyle 1}} ,
 \quad  1 \le p_0 < p_1 < \infty, \ \ga_0 < \ga < \ga_1.
\end{equation}

For a function $x (\cdot) \in \Ssp (\X)$, $x_n \in \X^{\ZZ^-}$ denotes the prehistory of $x(n)$, i.e.,
$ x_n^{[m]} = x (n+m) $, $ m \in \ZZ^- $. One can see that $x_n \in \B$ yields $x_{n+1} \in \B$ for any phase space $\B$.

For an operator $G \in \L (\B, \U)$, let us define a discrete function
$G (\cdot) \in \Ssp_+ \left( \L (\X,\U)
\right)$ by
\begin{equation} \label{e def G dot}
 G (n) = G P_{-n}^\T , \ \ n \in \ZZ^+, \text{ and the associated Z-transform  by } \
\wh G (\zeta) = \sum_{j=0}^{+\infty} \zeta^j G (j) .
\end{equation}

\begin{remark} \label{r p=inf repr}
The definition of $G (\cdot)$ is justified by the following: if $\B = \B^{p,\ga}$ with $p \in [1,\infty)$ or
$\B = \Bo^{\infty,\ga}$,
then
$ G \vphi = \sum_{j=0}^{+\infty} G (j) \vphi^{[-j]} $
for all $ \vphi \in \B$, where the infinite sum is understood in the sense of the strong topology of $\U$.
When $p=\infty$, this representation of $G$ does not hold for certain $G \in \L (\B^{\infty,\ga}, \U)$ and
$\vphi \in \B^{\infty,\ga}$. Such $G$ and $\vphi$ can be constructed, e.g., using Banach
limits, see \cite[Remark 2.9]{BK10} (and also Example \ref{ex l inf} below for another related effect).
\end{remark}

\begin{lemma} \label{l Kn est}
Let $\B=\Bo^{\infty,\ga}$ or  $\B = \B^{p,\ga}$ with $1 \le p \le \infty$.
Assume $G \in \L ( \B, \U)$. Then \linebreak $ \| G (n) \|_{\X \to \U} \leq e^{-\ga n} \| G \|_{\B \to \U}$
for all $n \in \ZZ^+$
and $\ga$ such that $e^\ga \leq R [\wh G ] $.
\end{lemma}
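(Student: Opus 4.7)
The plan is to test the operator $G$ on the elementary sequences $P_{-n}^\T \psi$ with $\psi \in \X$: by the very definition of $G(\cdot)$ in (\ref{e def G dot}) we have $G(n)\psi = G P_{-n}^\T \psi$, so the lemma reduces to bounding the phase-space norm of $P_{-n}^\T \psi$ and invoking boundedness of $G$.

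First, I recall from (\ref{e PmT}) that $P_{-n}^\T \psi$ is the sequence whose sole nonzero coordinate is $\psi$, placed at position $m=-n \in \ZZ^-$. Plugging this directly into the four phase-space norms from Section \ref{ss Ph sp aux op}, each sum or supremum collapses to a single term. For $\B = \B^{p,\ga}$ with $1 \le p < \infty$ one obtains $|P_{-n}^\T \psi|_{\B^{p,\ga}} = \left(|e^{-\ga n}\psi|_\X^p\right)^{1/p} = e^{-\ga n}|\psi|_\X$; for $\B = \B^{\infty,\ga}$ and $\B = \Bo^{\infty,\ga}$ the supremum over $m \in \ZZ^-$ is attained at $m=-n$ and yields the same value $e^{-\ga n}|\psi|_\X$. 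The membership $P_{-n}^\T \psi \in \Bo^{\infty,\ga}$ is automatic since the sequence has finite support.

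Next, since $G \in \L(\B,\U)$ is bounded,
\[
|G(n)\psi|_\U \;=\; |G P_{-n}^\T \psi|_\U \;\leq\; \|G\|_{\B \to \U}\,|P_{-n}^\T \psi|_\B \;=\; e^{-\ga n}\,\|G\|_{\B \to \U}\,|\psi|_\X,
\]
and taking the supremum over $\psi$ in the unit ball of $\X$ produces the claimed operator-norm estimate $\|G(n)\|_{\X \to \U} \leq e^{-\ga n}\|G\|_{\B \to \U}$.

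The side condition $e^\ga \leq R[\wh G]$ is then a trivial corollary: extracting $n$-th roots in this estimate gives $\limsup_{n\to\infty}\|G(n)\|_{\X\to\U}^{1/n} \leq e^{-\ga}$, whence the definition (\ref{e R}) of the convergence radius applied to $\wh G$ yields $R[\wh G] \geq e^\ga$. There is no real obstacle — the whole proof amounts to the observation that the coordinate inclusions $P_{-n}^\T$ realize the phase-space norm exactly, and everything else is just unpacking definitions.
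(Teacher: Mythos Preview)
Your proof is correct and is essentially identical to the paper's own argument: both test $G$ on the single-entry tuple $P_{-n}^\T\psi$, read off its $\B$-norm as $e^{-\ga n}|\psi|_\X$, and then pass to the operator norm and to $R[\wh G]$ via (\ref{e R}). The only cosmetic difference is that you spell out the norm computation separately for each phase-space type, whereas the paper handles them in one line.
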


\begin{proof}
The $\B$-norm of the tuple $\vphi = \{ \vphi^{[m]} \}_{m=-\infty}^{0} = \{ \dots, 0_\X, 0_\X, \psi, 0_\X, 0_\X, \dots  \}$
with the only nonzero entry at $m_0$-th position is $| \vphi |_\B = e^{\ga m_0} | \psi |_\X $ (note that $m_0 \leq 0$).
Since $ G (-m_0) \psi = G P_{m_0}^\T \psi = G \vphi $, we see that
$\| G (-m_0) \|_{\X \to \U} \leq e^{\ga m_0} \| G \|_{\B \to \U}$ for all $m_0 \in \ZZ^-$.
Plugging this into (\ref{e R}), one gets $R [\wh G ] \geq e^\ga $.
\end{proof}

\subsection{Stabilities and the input-state operator}
\label{ss stab}

We say that $Q(\cdot)$ \emph{defines the system (\ref{e h}) on a phase space $\B$} if $Q (n) \in \L (\B,\X)$ for all
$n \in \ZZ^+$.

From now on assume that  $Q (\cdot)$ defines system (\ref{e h}) on a certain phase space $\B$.
Then, for any $(\tau,\vphi) \in \ZZ^+ \times \B$, there exists  unique $x : \ZZ \to 
\X$
such that $x_\tau = \vphi $ and (\ref{e h}) holds for all $n \geq \tau$.
The function $x$ is called a solution to (\ref{e h}) through
$(\tau,\vphi)$, and is denoted by $x (\cdot, \tau, \vphi)$.
For each $n \in \ZZ$, $x_n (\tau, \vphi) := \{ x (n+m, \tau, \vphi) \}_{m=-\infty}^{0}  \in \B $.

Define \emph{the resolvent (fundamental) matrix}
$\{ X_Q (n,\tau) \}_{n \geq \tau \geq 0}$ by the equalities
\begin{equation}
 X_Q(n,\tau) \psi := x (n,\tau,P_0^\T \psi), \ \ \ \psi \in \X,
\end{equation}
recall that $x (\cdot ,\tau,P_0^\T \psi)$ is the solution to  (\ref{e h}) satisfying
\[
\{ \dots, x (\tau-2), x (\tau-1), x (\tau) \} = \{ \dots, 0_\X, 0_\X, \psi \}.
\]
So $ X_Q (n,\tau) \in \L (\X)$.

Let us define \emph{an unstructured input-state operator} $\IS_Q : \Ssp_+ (\X) \to
\Ssp_+ (\X)$ by $\IS_Q (f (\cdot))= x (\cdot)$, where $x=x(\cdot)$
is the solution to the nonhomogeneous system
\begin{equation} \label{e Qnj eq f}
x_0 =0_\B  , \ \ \  x(n+1)=\sum_{j=0}^{+\infty}  Q(n) x_n + f(n), \quad n\geq 0 .
\end{equation}
The unstructured input-state operator and the resolvent matrix are connected by
\begin{equation} \label{e IS=sumQ}
(\IS_Q f) (n) = \sum_{j=0}^{n-1} X_Q (n,j+1) f (j)  \qquad \text{ (see e.g. \cite{CKRV00})} .
\end{equation}

\begin{definition} \label{d UESn}
System (\ref{e h}) is called
\emph{uniformly exponentially stable (UES, in short) in (the sense of) $\X$
with respect to a phase space $\B$}  if it is defined on $\B$ and there exist constants $C, \nu >0 $  such that
\begin{equation}
|x(n, \tau, \vphi)|_\X \leq  C e^{-\nu (n-\tau)} | \vphi |_\B \quad
\text{for all} \ n, \tau \ \text{such that} \ n \geq \tau \geq 0 \ \text{ and } \vphi \in \B.
\label{e ExStX}
\end{equation}
\end{definition}

This stability definition for Volterra systems modifies
that of the first order case following the lines of \cite{HMN91}
and \cite{BK10}.

In \cite{CKRV00}, the exponential stability is understood in the
resolvent matrix sense.

\begin{definition}[\cite{CKRV00}] \label{d UES XX}
System (\ref{e h}) is called \emph{UES in the
resolvent matrix sense}
if there exist $C, \nu >0$ such that $|x(n, \tau, P_0^\T
\psi)|_\X \leq  C e^{-\nu (n-\tau)} | \psi |_\X $ for all $n \geq \tau
\geq 0$ and $\psi \in \X$, or equivalently,
\begin{equation}
\| X_Q (n,\tau) \|_{\X \to \X} \leq  C e^{-\nu (n-\tau)} . \label{e
ExSt XX}
\end{equation}
\end{definition}

\begin{remark} \label{r EM=SwrtBo}
(1) The exponential stability introduced by Definition \ref{d UE usual} is equivalent to the
UE stability in $\X$ w.r.t. $\Bo^{\infty,0}$ in the following sense.
One can define operators $Q(n)$ on finite
tuples $\vphi = \{ \vphi^{[m]} \}_{m=-\infty}^{0}$ (i.e., on tuples that have a finite number of nonzero entries)  by  $ Q (n,j) \vphi = \sum Q (n,j) \vphi^{[-j]} $.
Assume that system (\ref{e Veq}) is exponentially stable. Then, operators $Q(n)$ have 
a dense in $\Bo^{\infty,0}$ domain
and, by (\ref{e Def es}), are bounded as operators from $\Bo^{\infty,0}$ to $\X$. So they can be extended by continuity to the whole space
$\Bo^{\infty,0}$. The resulting system (\ref{e h}) is UES in
$\X$ w.r.t. $\Bo^{\infty,0}$. Indeed, (\ref{e Def es}) implies (\ref{e ExStX}) for finite tuples $\vphi$,
and passing to limit one can extend (\ref{e ExStX}) to all
tuples $\vphi \in \Bo^{\infty,0}$. Inverting this procedure, one can immediately see that each
system (\ref{e h}) that is UES in $\X$ w.r.t. $\Bo^{\infty,0}$ produces an exponentially stable system (\ref{e Veq}).

(2) Clearly, for every phase space $\B$, the UE stability in $\X$ w.r.t. $\B$ implies the UE stability  in the
resolvent matrix sense.
\end{remark}

For $\B = \B^{p,\ga}$, the following criterion of Bohl-Perron type is a reformulation of \cite[Theorems
3.1 and 7.2]{BK10} (see also \cite{BB06} for $q_1=q_2=\infty$). Clearly, the proof of \cite[Theorem 3.1]{BK10}
works for $\B=\Bo^{\infty,\ga}$ as well.

\begin{theorem}[\cite{BK10}] \label{t BP ga>0}
Let $\ga>0$. Let $\B=\Bo^{\infty,\ga}$ or $\B = \B^{p,\ga}$ with $1 \le p \le \infty$.
Let the (ordered) pair $(q_1,q_2) \neq (1,\infty)$ be such that $1 \leq q_1 \leq q_2 \leq
\infty$. Then
\begin{equation} \label{e BPcr}
\text{(\ref{e h}) is UES in } \X \text{ w.r.t. } \B \ \
\Leftrightarrow  \ \ \IS_Q  \in \L (\ll^{q_1} (\X), \ll^{q_2} (\X)) \text{ and }
\sup_{n \in \ZZ^+} \| Q (n) \|_{\B \to \X} < \infty.
\end{equation}
\end{theorem}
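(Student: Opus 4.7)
The plan is to treat this as an extension of \cite[Theorem 3.1]{BK10} and \cite[Theorem 7.2]{BK10} in which the only genuinely new case is $\B=\Bo^{\infty,\ga}$, since all exponentially weighted $\ll^p$-phase spaces (including $p=\infty$) are already covered there. The $\Rightarrow$-direction is a short convolution estimate and the substance lies in $\Leftarrow$, which I would handle by reducing (\ref{e h}) to a first-order difference system on the phase space $\B$.

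For $\Rightarrow$, suppose (\ref{e h}) is UES in $\X$ w.r.t.\ $\B$. By Remark \ref{r EM=SwrtBo}(2), the resolvent matrix satisfies $\|X_Q(n,\tau)\|_{\X\to\X}\le Ce^{-\nu(n-\tau)}$. For any $\vphi\in\B$ one has $Q(n)\vphi=x(n+1,n,\vphi)$, whence $\|Q(n)\|_{\B\to\X}\le Ce^{-\nu}$ uniformly in $n$. The identity (\ref{e IS=sumQ}) then expresses $\IS_Q$ as a discrete convolution with an exponentially decaying operator-valued kernel, which lies in $\ll^s(\L(\X))$ for every $s\in[1,\infty]$. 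An application of Young's inequality produces $\IS_Q\in\L(\ll^{q_1}(\X),\ll^{q_2}(\X))$ for every $1\le q_1\le q_2\le\infty$.

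For $\Leftarrow$, I would rewrite (\ref{e h}) as a first-order system $x_{n+1}=\A(n)x_n$ on $\B$, where $\A(n):\B\to\B$ is defined by $(\A(n)\vphi)^{[0]}=Q(n)\vphi$ and $(\A(n)\vphi)^{[m]}=\vphi^{[m+1]}$ for $m<0$. Because $\ga>0$, the pure backward shift part of $\A(n)$ contracts the $\B$-norm by the factor $e^{-\ga}$, so combined with the hypothesis $\sup_n\|Q(n)\|_{\B\to\X}<\infty$ the operators $\A(n)$ are uniformly bounded on $\B$. One can then run the Bohl-Perron argument for first-order systems exactly as in \cite[Sec.~3]{BK10}: the $\ll^{q_1}$-to-$\ll^{q_2}$ boundedness of $\IS_Q$, together with the contractive shift, forces exponential decay of the evolution family generated by $\A(\cdot)$ on $\B$, which is equivalent to UE stability in $\X$ w.r.t.\ $\B$. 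The $\Bo^{\infty,\ga}$-case requires only one small modification of the $\B^{\infty,\ga}$-argument from \cite{BK10}: finitely supported tuples are dense in $\Bo^{\infty,\ga}$, although not in $\B^{\infty,\ga}$, so estimates obtained on such tuples extend to all $\vphi\in\Bo^{\infty,\ga}$ by continuity.

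The main obstacle is the $\Leftarrow$-direction, specifically the step converting $\ll^{q_1}\to\ll^{q_2}$ boundedness of $\IS_Q$ into \emph{exponential} decay of the solutions (rather than merely $\ll^{q_2}$-summability). Here the positivity of $\ga$ is indispensable—it supplies the contractive factor in the phase-space norm—and the endpoint pair $(q_1,q_2)=(1,\infty)$ must be excluded because it is precisely the pair for which the analogous Datko-type implication fails already in the first-order theory.
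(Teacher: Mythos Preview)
Your plan matches the paper's treatment: the paper does not reprove this theorem but states that the $\B^{p,\ga}$ case is \cite[Theorems~3.1 and~7.2]{BK10} and that ``the proof of \cite[Theorem~3.1]{BK10} works for $\B=\Bo^{\infty,\ga}$ as well,'' which is exactly the reduction-to-first-order strategy you outline.

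One small correction to your $\Rightarrow$-sketch: for a time-varying system, (\ref{e IS=sumQ}) does \emph{not} express $\IS_Q$ as a convolution, since $X_Q(n,j+1)$ depends on both indices and not only on $n-j$. What you actually use is that the \emph{norm} estimate $|(\IS_Qf)(n)|_\X\le C\sum_{j=0}^{n-1}e^{-\nu(n-j-1)}|f(j)|_\X$ is dominated by a scalar convolution with an $\ll^1$ kernel, and Young's inequality is applied to this scalar majorant. The conclusion stands, but the phrasing should be adjusted.
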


The proofs of \cite[Theorems 3.1 and 7.2]{BK10}
essentially use the assumption that the phase space is exponentially fading
(i.e., $\ga >0$). Among other results of the next section, we give a Bohl-Perron type criterion for
Volterra systems of convolution type in the non-fading phase space $\Bo^{\infty,0}$.

\begin{remark}
Other types of connections between stability and properties of unstructured input input-state operator were
considered in \cite{CKRV00,SB03,SB04}.
\end{remark}

\section{UE-stability for Volterra systems of convolution type}
\label{s conv}

\subsection{Criteria of UE-stability in $\Bo^{\infty,0}$ and in fading phase spaces}
\label{ss crit conv}

Let $\ga \in \RR$. Let $\B=\Bo^{\infty,\ga}$ or  $\B = \B^{p,\ga}$ with $1 \le p \le \infty$.
Assume $K \in \L ( \B, \X)$ and
let $K (\cdot) $  be the
 associated discrete function  defined by  (\ref{e def G dot}).
In the case $\B=\B^{\infty,\ga}$, we impose the additional technical assumption that
\begin{multline} \label{e as K=sum}
K \vphi =  \sum_{j=0}^{+\infty} \ K(j) \ \vphi^{[-j]} \ \ \text{ for all } \vphi \in \B^{\infty,\ga},
\text{ where the infinite sum is understood in the sense}
\\ \text{  of the norm topology of } \X.
\end{multline}
Note that for the other phase spaces $B$, this assumption is always fulfilled due to Remark \ref{r p=inf repr}.

Recall that $\wh K (\zeta) $ is the Z-transform of the discrete function $K(\cdot)$.
Lemma \ref{l Kn est} implies $R [\wh K ] \geq e^\ga $.
That is, the Z-transform $ \wh K (\cdot) $ is analytic in $\DD (e^\ga)$.
The sum $\sum_{j=0}^{+\infty} \ K(j) \ \vphi^{[-j]} $ defines a continuous operator on
$\B^{p,\ga_1}$ with any $\ga_1 < \ln R [\wh K ]$ (in particular, with any $\ga_1 < \ga$).
We keep the same notation $K$ for all these operators.

In this section, we study the Volterra system of convolution type
\begin{equation} \label{e Keq}
 x(n+1)=\sum_{j=0}^{+\infty} \ K(j) \ x(n-j) , \quad n\geq 0 .
\end{equation}
In our settings, this system can be written in the form $x (n+1) = K x_n$ and is defined on the phase space $\B$ (as well as on the phase spaces
$\B^{p,\ga_1}$ with $\ga_1 < R [\wh K ]$).

Recall that the unstructured input-state
operator $\IS_K $ associated with (\ref{e Keq}) is defined by $\IS_K (f (\cdot))= x (\cdot)$, where $x=x(\cdot)$
is the solution to the nonhomogeneous system
\begin{equation} \label{e Keq f}
x(n+1)=\sum_{j=0}^{n} \ K(n-j) \ x(j) + f(n) , \quad n\geq 0, \ \ \
x (0) =0_\X .
\end{equation}
Recall also that an operator $G \in \L(\X) $ is called boundedly invertible if
$\ker G = \{ 0_\X \}$ and $G^{-1} \in  \L(\X)$.

\begin{theorem} \label{t UEScr ga>0}
Let $\ga >0$. Let system (\ref{e Keq}) be defined on $\B$, where $\B=\Bo^{\infty,\ga}$ or $\B = \B^{p,\ga}$ with $1 \le p \le \infty$. Let
\begin{equation} \label{e q1q2neq}
1 \le q_1 \le q_2 \le \infty \qquad \text{and} \qquad  (q_1,q_2)
\neq (1,\infty) .
\end{equation}
Then, the following statements are equivalent:
\item[(i)]  System (\ref{e Keq}) is UES in $\X$ w.r.t. $\B$.
\item[(ii)] For all $\zeta  \in \overline{\DD (1)} $, the operators $ I_\X - \zeta \wh K (\zeta) $ are boundedly invertible.
\item[(iii)] System (\ref{e Keq}) is UES in the resolvent matrix sense.
\item[(iv)] $\IS_K \in \L \left( \ll^{q_1}, \ll^{q_2} \right) $.
\end{theorem}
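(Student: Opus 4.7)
The plan is to close the cycle (i)$\Rightarrow$(iii)$\Rightarrow$(ii)$\Rightarrow$(iv)$\Rightarrow$(i), leaning on the Bohl--Perron criterion of Theorem~\ref{t BP ga>0} at one end and on operator-valued analytic function theory at the other. Two edges are free: (i)$\Rightarrow$(iii) is Remark~\ref{r EM=SwrtBo}(2), and (iv)$\Rightarrow$(i) is Theorem~\ref{t BP ga>0} applied with $Q(n)\equiv K$, since the side condition $\sup_{n\in\ZZ^+}\|Q(n)\|_{\B\to\X}<\infty$ in (\ref{e BPcr}) reduces to $\|K\|_{\B\to\X}<\infty$, which is built into the setup.

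For (iii)$\Rightarrow$(ii), set $\Phi(n):=X_K(n,0)$. Time-homogeneity of the convolution kernel yields $X_K(n,\tau)=\Phi(n-\tau)$ together with the recursion $\Phi(0)=I_\X$, $\Phi(n+1)=\sum_{j=0}^{n}K(j)\Phi(n-j)$. Hypothesis (iii) gives $\|\Phi(n)\|_{\X\to\X}\le C e^{-\nu n}$, so $\wh\Phi$ is analytic on $\DD(e^\nu)\supset\overline{\DD(1)}$, while $\wh K$ is analytic on $\DD(e^\ga)\supset\overline{\DD(1)}$ by Lemma~\ref{l Kn est}. Z-transforming the recursion produces the right-inverse identity $(I_\X-\zeta\wh K(\zeta))\wh\Phi(\zeta)=I_\X$ on $\DD(\min(e^\nu,e^\ga))$. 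For the matching left identity, observe that near $\zeta=0$ the Neumann series $\sum_{n\ge 0}(\zeta\wh K(\zeta))^n$ converges to a two-sided inverse of $I_\X-\zeta\wh K(\zeta)$, and by uniqueness this inverse must coincide with the right inverse $\wh\Phi(\zeta)$; hence $\wh\Phi(\zeta)(I_\X-\zeta\wh K(\zeta))=I_\X$ near $0$, and the identity theorem for operator-valued analytic functions propagates this equality over the joint domain of analyticity, in particular across $\overline{\DD(1)}$. Thus (ii) holds.

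For (ii)$\Rightarrow$(iv), the map $\zeta\mapsto I_\X-\zeta\wh K(\zeta)$ is analytic on $\DD(e^\ga)$ and, by (ii), takes values in the open subset of $\L(\X)$ consisting of boundedly invertible operators throughout the compact $\overline{\DD(1)}$. A standard openness/compactness argument then produces $r>1$ with $\overline{\DD(r)}\subset\DD(e^\ga)$ on which $\wh\Phi(\zeta):=(I_\X-\zeta\wh K(\zeta))^{-1}$ is continuous and uniformly bounded. Since $\wh\Phi$ coincides near $\zeta=0$ with the Z-transform of the sequence $\Phi(n)$, the Cauchy integral formula
\begin{equation*}
\Phi(n)=\frac{1}{2\pi i}\oint_{|\zeta|=r}\frac{\wh\Phi(\zeta)}{\zeta^{n+1}}\,d\zeta
\end{equation*}
gives $\|\Phi(n)\|_{\X\to\X}\le M r^{-n}$, so $\{\Phi(n)\}\in\ll^1(\L(\X))$. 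Since $(\IS_K f)(n)=\sum_{j=0}^{n-1}\Phi(n-j-1)f(j)$ is convolution with an $\ll^1$-kernel, Young's inequality yields $\IS_K\in\L(\ll^{q_1},\ll^{q_1})$, and the embedding $\ll^{q_1}\subset\ll^{q_2}$ for $q_1\le q_2$ delivers (iv) under (\ref{e q1q2neq}).

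The main obstacle is the two-sided-inverse step inside (iii)$\Rightarrow$(ii): the convolution recursion for $\Phi$ provides only a \emph{right} inverse of $I_\X-\zeta\wh K(\zeta)$, and in the non-commutative operator-valued setting one cannot simply swap the order of convolution. Bridging this gap through the Neumann series near $\zeta=0$ followed by analytic continuation is the key non-routine move; the remainder of the cycle reduces to Theorem~\ref{t BP ga>0}, openness of the set of invertible operators in $\L(\X)$, and a Cauchy estimate.
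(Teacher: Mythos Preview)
Your proof is correct and traces the same cycle as the paper's: (i)$\Rightarrow$(iii) trivially, (iv)$\Rightarrow$(i) via the Bohl--Perron criterion, and the passage between (ii) and (iii)/(iv) through the identity $[I_\X-\zeta\wh K(\zeta)]^{-1}=\wh X_K(\zeta)$ combined with openness of the invertibles and Young's inequality for convolutions. The one substantive divergence is exactly the step you flag as non-routine, the left-inverse identity $\wh X_K(\zeta)[I_\X-\zeta\wh K(\zeta)]=I_\X$. The paper (Lemmas~\ref{l IS equiv} and~\ref{l whK=whXK new}) handles this by a purely algebraic testing argument: it feeds the specific choices $x=(0_\X,\psi,0_\X,\ldots)$ and $f=(\psi,0_\X,\ldots)$ through the three equivalent characterizations of $\IS_K$ in Lemma~\ref{l IS equiv} to read off both formal-power-series identities $(I_\X-\zeta\wh K)\wh X_K\equiv I_\X$ and $\wh X_K(I_\X-\zeta\wh K)\equiv I_\X$ directly, without any appeal to Neumann series or analytic continuation. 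Your route through the Neumann series near $\zeta=0$ followed by the identity theorem is equally valid; the paper's version is a bit more self-contained at the formal-power-series level, while yours is the standard complex-analytic maneuver.
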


The proof is given in Section \ref{ss proof of UEScr}. A connection of
$ \max_{|\zeta|=1} \| [I_\X - \zeta  \wh K (\zeta)]^{-1} \|_{\X \to \X} $
and $ \| \IS_K \|_{ \ll^q (\X) \to \ll^q ( \X) }  $ is considered in Corollary \ref{c |I-whK|=|IS|}.
Under certain additional assumptions, the equivalencies (i) $\Leftrightarrow$ (ii) $\Leftrightarrow$ (iii)
were obtained in \cite[Theorems 1 and 2]{FMN04_JMJ}, see  for details Remark \ref{r comp ga>0} below.

\begin{proposition} \label{p st emb}
Assume that for another phase space $\B_1$ the
continuous embedding $\B_1 \subset \B$ holds. Then if system (\ref{e h}) is UES
in $\X$ w.r.t. $\B$, it is UES in $\X$ w.r.t. $\B_1$.
\end{proposition}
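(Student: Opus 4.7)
The plan is essentially a direct unwinding of definitions together with the estimate furnished by the continuous embedding. First I would record the key quantitative content of the hypothesis $\B_1 \subset \B$: continuity of the inclusion map yields a constant $C_1 > 0$ such that $|\vphi|_\B \leq C_1 |\vphi|_{\B_1}$ for every $\vphi \in \B_1$.

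Next I would verify that system (\ref{e h}) is defined on $\B_1$ in the sense required by the definition of UE stability in $\X$ w.r.t. $\B_1$. Since $Q(n) \in \L(\B,\X)$ by assumption, the restriction $Q(n)|_{\B_1}$ acts from $\B_1$ to $\X$ with
\[
\|Q(n)\|_{\B_1 \to \X} \leq C_1 \|Q(n)\|_{\B \to \X} < \infty ,
\]
so $Q(n)|_{\B_1} \in \L(\B_1,\X)$ for every $n \in \ZZ^+$. Hence for any $(\tau,\vphi) \in \ZZ^+ \times \B_1$, the unique solution $x(\cdot,\tau,\vphi)$ through $(\tau,\vphi)$ is well-defined by the recursion $x(n+1) = Q(n) x_n$, and its prehistories remain in $\B_1$ (this is the same general observation used for $\B$: $x_n \in \B_1$ implies $x_{n+1} \in \B_1$). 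Crucially, because the values $Q(n) x_n \in \X$ are computed by the same rule regardless of whether we view $x_n$ as an element of $\B_1$ or of $\B$, this solution coincides with the solution to (\ref{e h}) through $(\tau,\vphi)$ viewed as an element of $\B$.

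Finally, I apply the UE stability hypothesis w.r.t. $\B$: there exist $C,\nu > 0$ such that $|x(n,\tau,\vphi)|_\X \leq C e^{-\nu(n-\tau)} |\vphi|_\B$ for all $n \geq \tau \geq 0$ and $\vphi \in \B$. Restricting to $\vphi \in \B_1 \subset \B$ and using the embedding bound gives
\[
|x(n,\tau,\vphi)|_\X \leq C e^{-\nu(n-\tau)} |\vphi|_\B \leq (C C_1) e^{-\nu(n-\tau)} |\vphi|_{\B_1} ,
\]
which is exactly (\ref{e ExStX}) for the phase space $\B_1$ with constants $C C_1$ and $\nu$. No genuine obstacle arises; the only point requiring care is the identification of the solution on $\B_1$ with that on $\B$, which is immediate because the recursion (\ref{e h}) is pointwise in $n$ and the embedding preserves the action of each $Q(n)$.
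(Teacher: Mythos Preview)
Your proof is correct and follows exactly the same approach as the paper: use the continuous embedding inequality $|\cdot|_\B \leq C_1 |\cdot|_{\B_1}$, note that the system is defined on $\B_1$, and combine with the UE stability estimate on $\B$. The paper's own argument is the one-line version of what you spelled out in detail.
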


For the proof, note that the embedding implies that the system is defined on $\B_1$. Now
the statement follows immediately from the UE stability definition and the continuous embedding inequality
$| \cdot |_{\B} \leq C | \cdot |_{ \B_{\scriptstyle 1}}$.

The main result of this section is that, for system (\ref{e Keq}) defined on $\Bo^{\infty,0}$,
\emph{this proposition can be partially reversed}.

\begin{theorem} \label{t UEScr ga=0}
Let (\ref{e Keq}) be
defined on $\B=\Bo^{\infty,0}$ or on $\B = \B^{\infty,0}$.
Then, the following statements are equivalent:
\item[(i)]  System (\ref{e Keq}) is UES in $\X$ w.r.t.  $\B$.
\item[(ii)] There exists $\ga_0 >0$ such that (\ref{e Keq}) is UES in $\X$ w.r.t. $\B^{p,\ga}$
for all $(p,\ga) \in [1,\infty] \times (0,\ga_0]$.
\item[(iii)] System (\ref{e Keq}) is UES in the resolvent matrix sense.
\end{theorem}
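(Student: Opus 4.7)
Of the three implications, two are essentially free. (i) $\Rightarrow$ (iii) is Remark \ref{r EM=SwrtBo}(2). For (ii) $\Rightarrow$ (i), the embedding chain (\ref{e emB}) with $\ga_0 = 0 < \ga$ gives $\B^{\infty,0} \subset \B^{1,\ga}$ and $\Bo^{\infty,0} \subset \B^{1,\ga}$ continuously, whereupon Proposition \ref{p st emb} transports UES w.r.t.\ the larger fading space down to UES w.r.t.\ the non-fading subspace.

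The real work is (iii) $\Rightarrow$ (ii). Under $\|X_K(n)\|_{\X \to \X} \le C e^{-\nu n}$ the Z-transform $\wh X_K$ is analytic on $\DD(e^\nu)$ and continuous on $\overline{\DD(1)}$. My plan is to establish exponential decay $\|K(j)\|_{\X \to \X} \le C_1 e^{-\ga_0 j}$ for some $\ga_0 > 0$; once this is secured, $\sum_j \|K(j)\| e^{\ga j} < \infty$ for each $0 < \ga < \ga_0$, so $K$ automatically defines a bounded operator on every $\B^{p,\ga}$ with $p \in [1,\infty]$ and $0 < \ga < \ga_0$, and Theorem \ref{t UEScr ga>0} applied in each such $(p,\ga)$ --- whose hypothesis (iii) is exactly the assumed one --- yields UES w.r.t.\ $\B^{p,\ga}$, which is (ii). The decay of $K(j)$ will follow from analytically extending $\wh K$ across $\TT$ via the rearrangement $\wh K(\zeta) = \zeta^{-1}(I_\X - \wh X_K(\zeta)^{-1})$ of the identities $(I_\X - \zeta \wh K(\zeta)) \wh X_K(\zeta) = I_\X = \wh X_K(\zeta)(I_\X - \zeta \wh K(\zeta))$ valid on $\DD(1)$, combined with Cauchy's estimate; the task thus reduces to showing $\wh X_K$ is boundedly invertible on $\overline{\DD(1)}$, after which continuity of $\wh X_K$ and compactness of $\TT$ propagate bounded invertibility to some disc $\DD(e^{\ga_0})$.

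The crucial preparatory estimate --- and the place where the non-fading hypothesis enters decisively --- is the uniform bound $\|\wh K(\zeta)\|_{\X \to \X} \le \|K\|_{\B \to \X}$ for $\zeta \in \DD(1)$: plugging the test tuple $\vphi^{[m]} := \zeta^{-m} \psi$ (which lies in $\Bo^{\infty,0}$ when $|\zeta|<1$, since $|\zeta^{-m}| = |\zeta|^{|m|} \to 0$ as $m \to -\infty$) into $K$ gives $K\vphi = \wh K(\zeta) \psi$ and $|\vphi|_{\B^{\infty,0}} = |\psi|_\X$. In the $\B^{\infty,0}$ case the same bound extends to $|\zeta| = 1$ via assumption (\ref{e as K=sum}). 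For any $\zeta_0 \in \TT$ I would then pass to the strong limit $r \to 1^-$ in $(I_\X - r\zeta_0 \wh K(r\zeta_0)) \wh X_K(r\zeta_0) = I_\X$: operator-norm continuity of $\wh X_K$ at $\zeta_0$, Abel's theorem applied to the series $\sum K(j) \zeta_0^j \wh X_K(\zeta_0)\psi$, and the uniform bound (to control the composition error $\wh K(r\zeta_0)[\wh X_K(r\zeta_0)\psi - \wh X_K(\zeta_0)\psi]$) together give $(I_\X - \zeta_0 \wh K(\zeta_0)) \wh X_K(\zeta_0) = I_\X$ in the strong operator topology; the symmetric identity is obtained the same way, so $I_\X - \zeta_0 \wh K(\zeta_0)$ is a bounded two-sided inverse of $\wh X_K(\zeta_0)$. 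The main obstacle is precisely this strong-limit step --- $\wh K(\zeta_0)$ on $\TT$ is a priori only defined through strong convergence of its power series, and it is exactly the uniform bound, an artefact of the non-fading hypothesis, that rescues the argument. That this hypothesis cannot be dispensed with is illustrated by the toy $\wh X_K(\zeta) = I_\X - \zeta$ (corresponding to $K(j) \equiv -I_\X$), which produces a super-exponentially decaying resolvent yet has $\wh X_K(1) = 0$, and for which $K$ fails to be defined on $\B^{\infty,0}$.
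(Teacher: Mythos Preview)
Your overall architecture matches the paper's: the two easy implications are dispatched exactly as the paper does, and for (iii) $\Rightarrow$ (ii) you correctly reduce to showing that $K(\cdot)$ decays exponentially, via analytic continuation of $\wh K(\zeta) = \zeta^{-1}\bigl(I_\X - \wh X_K(\zeta)^{-1}\bigr)$ once $\wh X_K$ is known to be boundedly invertible on a neighbourhood of $\overline{\DD(1)}$. You also isolate the decisive ingredient --- the uniform bound $\|\wh K(\zeta)\|_{\X\to\X}\le\|K\|_{\B\to\X}$ on $\DD(1)$ obtained from the test tuple $\vphi^{[m]}=\zeta^{-m}\psi$ --- exactly as the paper does in Proposition \ref{e Kl1=>RK>1}.

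The gap is in the passage to $\zeta_0\in\TT$ in the $\Bo^{\infty,0}$ case. Abel's theorem presupposes convergence of $\sum_j K(j)\zeta_0^j v$; it does not furnish it. For $K\in\L(\Bo^{\infty,0},\X)$ this series need not converge at $|\zeta_0|=1$: take $\X=c_0$, $K(j)a:=a_{[1]}e_j$; then $K\vphi=(\vphi^{[0]}_{[1]},\vphi^{[-1]}_{[1]},\dots)\in c_0$ for every $\vphi\in\Bo^{\infty,0}$, so $K\in\L(\Bo^{\infty,0},c_0)$, yet for $\psi=e_1$ and $\zeta_0=1$ the partial sums $\sum_{j\le N}K(j)\psi=(1,\dots,1,0,\dots)$ diverge in $c_0$. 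Your uniform bound controls only the norms of partial sums, not their convergence, so the expression $\wh K(\zeta_0)$ on which your limit identity rests may simply fail to exist. (Your argument is fine in the $\B^{\infty,0}$ case, where assumption (\ref{e as K=sum}) forces convergence.)

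The paper avoids this entirely. From the uniform bound it deduces $\|\wh X_K(\zeta)^{-1}\|=\|I_\X-\zeta\wh K(\zeta)\|\le 1+\|K\|$ on $\DD(1)$, and then applies Lemma \ref{l G bound inv}: an $\L(\X)$-valued function $G$ analytic on $\DD(\varrho)$ with $\varrho>1$, invertible on $\DD(1)$ with $\sup_{\DD(1)}\|G^{-1}\|<\infty$, is automatically invertible on a neighbourhood of $\overline{\DD(1)}$. The proof passes the lower bounds $|G(\zeta)\psi|\ge C^{-1}|\psi|$ and $|G(\zeta)^*\psi^*|\ge C^{-1}|\psi^*|$ to the limit $\zeta\to\zeta_0\in\TT$ using only continuity of $G$ (not of $G^{-1}$), obtaining injectivity, closed range, and dense range of $G(\zeta_0)$. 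The point is that $\wh K$ never has to be evaluated on $\TT$. An even quicker route, once you have the uniform inverse bound, is a Neumann-series perturbation: for $r$ close to $1$, $\|\wh X_K(\zeta_0)-\wh X_K(r\zeta_0)\|<\bigl(1+\|K\|\bigr)^{-1}$ by norm continuity of $\wh X_K$, so invertibility of $\wh X_K(r\zeta_0)$ transfers to $\wh X_K(\zeta_0)$.
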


The proof is given in Section \ref{ss proof of UEScr}.

Note the following simple fact:
\begin{multline} \label{e RwhK>1 equiv}
 K (\cdot) \text{ decays exponentially } \ \Longleftrightarrow \ \text{ there exists } \ga_0>0 \text{ such that } K \in \L (\B^{p,\ga},\X) \\
\text{ for all }
(p,\ga) \in [1,\infty] \times (0,\ga_0]
\end{multline}
(obviously, 'for all $(p,\ga) \in $ ...' can be replaced by 'for a certain pair $(p,\ga) \in $ ...' saving the equivalence).
This fact together with Theorems \ref{t UEScr ga=0} and \ref{t UEScr ga>0}
implies immediately the following statement, which may be considered as
\emph{a Bohl-Perron type criterion for  Volterra system of convolution type in $\Bo^{\infty,0}$}.

\begin{corollary} \label{c BPcr ga=0}
Let $q_1$ and $q_2$ satisfy (\ref{e q1q2neq}). Then, system (\ref{e Keq})
is UES in $\X$ w.r.t. $\Bo^{\infty,0}$ (w.r.t. $\B^{\infty,0}$) if and only if
$\IS_K \in \L \left( \ll^{q_1}, \ll^{q_2} \right) $ and $K (\cdot)$ decays exponentially.
\end{corollary}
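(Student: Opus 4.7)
The plan is to chain together Theorems \ref{t UEScr ga>0} and \ref{t UEScr ga=0} via the characterization (\ref{e RwhK>1 equiv}) of exponential decay of $K(\cdot)$. No new estimates are needed; the argument is essentially a diagram chase, and the only delicate point is to ensure the Volterra operator $K$ is bounded on each phase space involved so that the hypotheses of those theorems are met.

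For the ``only if'' direction, I would first apply Theorem \ref{t UEScr ga=0} (i)$\Rightarrow$(ii): from UES w.r.t.\ $\B\in\{\Bo^{\infty,0},\B^{\infty,0}\}$ one obtains some $\ga_0>0$ such that (\ref{e Keq}) is UES w.r.t.\ every $\B^{p,\ga}$ with $(p,\ga)\in[1,\infty]\times(0,\ga_0]$. In particular, $K\in\L(\B^{p,\ga},\X)$ for such $(p,\ga)$, so by (\ref{e RwhK>1 equiv}) the kernel $K(\cdot)$ decays exponentially. Fixing any admissible pair $(q_1,q_2)$ and any such $(p,\ga)$, Theorem \ref{t UEScr ga>0} (i)$\Rightarrow$(iv) then yields $\IS_K\in\L(\ll^{q_1}(\X),\ll^{q_2}(\X))$.

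For the ``if'' direction, I would first use (\ref{e RwhK>1 equiv}) to convert the exponential decay of $K(\cdot)$ into the existence of $\ga_0>0$ with $K\in\L(\B^{p,\ga},\X)$ for all $(p,\ga)\in[1,\infty]\times(0,\ga_0]$, so that (\ref{e Keq}) is defined on each such $\B^{p,\ga}$. Next I would verify that the system is defined on $\Bo^{\infty,0}$ and $\B^{\infty,0}$ as well: exponential decay of $K(\cdot)$ together with boundedness of $\vphi\in\B^{\infty,0}$ makes $\sum_j K(j)\vphi^{[-j]}$ absolutely norm-convergent, which simultaneously gives $K\in\L(\B^{\infty,0},\X)$ and the technical assumption (\ref{e as K=sum}). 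Picking any $(p,\ga)$ with $\ga\in(0,\ga_0]$, Theorem \ref{t UEScr ga>0} (iv)$\Rightarrow$(iii) produces UES in the resolvent matrix sense, and Theorem \ref{t UEScr ga=0} (iii)$\Rightarrow$(i) finally upgrades this to UES in $\X$ w.r.t.\ $\Bo^{\infty,0}$ (respectively $\B^{\infty,0}$). The only nontrivial bookkeeping is the definedness of the system on the $\ll^{\infty}$-type spaces at $\ga=0$; once that is in hand, the rest is a direct citation of the two theorems.
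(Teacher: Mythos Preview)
Your proof is correct and follows essentially the same route as the paper: the paper simply remarks that the corollary follows immediately from (\ref{e RwhK>1 equiv}) together with Theorems \ref{t UEScr ga=0} and \ref{t UEScr ga>0}, and you have unpacked precisely that chain of implications. Your explicit verification that exponential decay of $K(\cdot)$ forces $K\in\L(\B^{\infty,0},\X)$ and (\ref{e as K=sum}) is a welcome bit of care, though in the paper's presentation definedness on $\B$ is already a standing assumption of the section.
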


\begin{corollary} \label{c Zcr ga=0}
System (\ref{e Keq}) is UES in $\X$ w.r.t. $\Bo^{\infty,0}$ ( w.r.t. $\B^{\infty,0}$) if and only if
$K (\cdot)$ decays exponentially and the operators $ I_\X - \zeta \wh K (\zeta) $ are boundedly invertible for all $\zeta  \in \overline{\DD(1)} $ .
\end{corollary}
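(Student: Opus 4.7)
The plan is to read this off as a direct consequence of Theorems \ref{t UEScr ga>0} and \ref{t UEScr ga=0}, glued together by the elementary equivalence (\ref{e RwhK>1 equiv}) that converts exponential decay of $K(\cdot)$ into membership $K \in \L(\B^{p,\ga},\X)$ for some $\ga>0$. So the corollary should need essentially no new work beyond a careful bookkeeping of which phase spaces the system is defined on.

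For the direction ``UES $\Rightarrow$ exponential decay plus invertibility,'' I would start from the assumption that (\ref{e Keq}) is UES in $\X$ w.r.t.\ $\B \in \{\Bo^{\infty,0},\B^{\infty,0}\}$. Theorem \ref{t UEScr ga=0} (i) $\Rightarrow$ (ii) then supplies $\ga_0 > 0$ such that (\ref{e Keq}) is UES in $\X$ w.r.t.\ $\B^{p,\ga}$ for every pair $(p,\ga) \in [1,\infty]\times(0,\ga_0]$. Fixing any such pair, the very statement of UES on $\B^{p,\ga}$ presupposes $K \in \L(\B^{p,\ga},\X)$, whence (\ref{e RwhK>1 equiv}) forces $K(\cdot)$ to decay exponentially. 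Applying (i) $\Rightarrow$ (ii) of Theorem \ref{t UEScr ga>0} to the UES convolution system on $\B^{p,\ga}$ yields the required bounded invertibility of $I_\X - \zeta \wh K(\zeta)$ for all $\zeta \in \overline{\DD(1)}$.

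For the converse, assume $K(\cdot)$ decays exponentially and $I_\X - \zeta \wh K(\zeta)$ is boundedly invertible on $\overline{\DD(1)}$. By (\ref{e RwhK>1 equiv}) there is $\ga_0>0$ with $K \in \L(\B^{p,\ga},\X)$ whenever $(p,\ga) \in [1,\infty]\times(0,\ga_0]$, so (\ref{e Keq}) is defined on these fading phase spaces, and Theorem \ref{t UEScr ga>0} (ii) $\Rightarrow$ (i) gives UES in $\X$ w.r.t.\ each of them. To invoke Theorem \ref{t UEScr ga=0} (ii) $\Rightarrow$ (i) I also need (\ref{e Keq}) to be defined on the target phase space $\B$. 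This is where a small verification enters: exponential decay of $K(\cdot)$ yields $\sum_j \|K(j)\|_{\X\to\X} < \infty$, so the series $K\vphi = \sum_{j\ge 0} K(j)\vphi^{[-j]}$ converges absolutely in $\X$ for every $\vphi \in \B^{\infty,0}$, which both furnishes $K \in \L(\B^{\infty,0},\X)$ and certifies the technical hypothesis (\ref{e as K=sum}) for $\B=\B^{\infty,0}$. Restriction to $\Bo^{\infty,0}$ is automatic. Theorem \ref{t UEScr ga=0} (ii) $\Rightarrow$ (i) then delivers UES in $\X$ w.r.t.\ $\B$.

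The only potential snag is precisely this last definability check on $\B^{\infty,0}$, but as noted it reduces to the trivial observation that an exponentially decaying operator-valued sequence is norm-summable; no further analysis is required.
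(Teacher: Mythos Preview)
Your argument is correct and follows exactly the route the paper intends: Corollary \ref{c Zcr ga=0} is stated without proof immediately after the remark that (\ref{e RwhK>1 equiv}) together with Theorems \ref{t UEScr ga=0} and \ref{t UEScr ga>0} yields such consequences, and your write-up just makes this explicit. The only addition you make beyond the paper's one-line justification is the definability check on $\B^{\infty,0}$ (including assumption (\ref{e as K=sum})), which is indeed the small bookkeeping detail the paper leaves to the reader.
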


In the case $\X=\CC^n$, Corollary \ref{c Zcr ga=0} was obtained in \cite[Theorems 5 and 2]{EM96}.
Note that when $\X$ is finite-dimensional,
the condition that $ I_\X - \zeta \wh K (\zeta) $ is boundedly invertible for $\zeta  \in \overline{\DD(1)} $ turns into
the condition
\[
\det [I_\X - \zeta \wh K (\zeta)] \neq 0 ,   \ \ \ \ \zeta  \in \overline{\DD(1)}
\]
of \cite[Theorem 2]{EM96}.
In the case when $\X$ is a Banach space and the operators $K(j)$ are compact,
a statement close to Corollary \ref{c Zcr ga=0} follows from \cite[Theorems 4 and 2]{FMN04_JDEA},
see for details Remark \ref{r comp ga=0} below.

\subsection{Proofs of Theorems \ref{t UEScr ga>0} and \ref{t UEScr ga=0}}
\label{ss proof of UEScr}


Let $\ShEl$ be the right shift in $\Ssp_+ (\X)$, i.e., $(\ShEl x)
(j) = \left\{ \begin{array}{ll} 0, & j=0 \\
x (j-1) , & j \in \NN
\end{array} \right. $.
By $\ShEl^\T$ we define the operator with the transpose $\L(\X)$-valued matrix, i.e.,
\[
(\ShEl^\T x) (j) = x (j+1) \ \ \ \text{ for all } j \in \ZZ^+ .
\]
In other words, $\ShEl^\T$ is the backward shift with truncation of the coordinate with the negative index $-1$.

Obviously, for arbitrary $K (\cdot) \in \Ssp_+ (\L(X))$,
\begin{equation} \label{e IS bijec}
\ShEl^\T \IS_K \ \text{ is a self-bijection of } \ \Ssp_+ (\X)
\end{equation}
(here $\IS_K$ is the unstructured input-state operator defined via  (\ref{e Keq f})).

For convolution system (\ref{e Keq}) the resolvent matrix is
a Toeplitz matrix, i.e., $ X_K (n,j) = X_K (n-j)$ with $X_K
(\cdot)\in \Ssp_+ (\L (\X))$.
In particular, one can define the
Z-transform $\wh X_K (\zeta)$ of $X_K (\cdot)$ (at least as a formal
power series).

In the next lemma, assertions (ii) and (iii) are understood in
the power series sense,  $(\ShEl^\T x)\wh \ (\zeta) $ is the Z-transform of  $(\ShEl^\T x) (\cdot)$
defined by (\ref{e Ztr}).

\begin{lemma} \label{l IS equiv}
For systems of convolution type, the following assertions are equivalent:
\item[(i)] $x (n) = (\IS_K f) (n)$, $n \ge 0$,
\item[(ii)] $ [I_\X - \zeta \wh K (\zeta)] \ (\ShEl^\T x)\wh \ \, (\zeta) = \wh
f(\zeta)$ and $x(0) = 0_\X$,
\item[(iii)] $ (\ShEl^\T x)\wh \ \, (\zeta)   = \wh X_K (\zeta) \ \wh f (\zeta)$
 and $x(0) = 0_\X$.
\end{lemma}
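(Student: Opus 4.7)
The plan is to apply the Z-transform to each equation and reduce the three equivalences to algebraic identities in the formal power series ring $\L(\X)[[\zeta]]$. The statement explicitly works in the power series sense, so no convergence issue arises; I only need to track how the Z-transform interacts with the shift $\ShEl^\T$ and with the Cauchy product of power series.

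I would first establish (i) $\Leftrightarrow$ (ii). By definition, (i) means that $x$ satisfies (\ref{e Keq f}): $x(0)=0_\X$ and $x(n+1)=\sum_{j=0}^{n} K(n-j)\,x(j)+f(n)$. Since $x(0)=0_\X$, the basic shift identity $\wh x(\zeta)=\zeta\,(\ShEl^\T x)\wh{\,}(\zeta)$ holds. Applying the Z-transform termwise to the recurrence, the left side becomes $(\ShEl^\T x)\wh{\,}(\zeta)$, and the convolution sum on the right becomes $\wh K(\zeta)\,\wh x(\zeta)=\zeta\,\wh K(\zeta)\,(\ShEl^\T x)\wh{\,}(\zeta)$ by the Cauchy product formula. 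Rearranging gives (ii). Conversely, (ii) does not fix $\wh x(\zeta)$ itself but only $(\ShEl^\T x)\wh{\,}(\zeta)$, so the condition $x(0)=0_\X$ is additional information; once both are imposed, one simply reads off (\ref{e Keq f}) by comparing coefficients of $\zeta^n$.

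For the equivalence (ii) $\Leftrightarrow$ (iii), I would first derive the identity
\begin{equation*}
[I_\X-\zeta\,\wh K(\zeta)]\,\wh X_K(\zeta) \;=\; I_\X \;=\; \wh X_K(\zeta)\,[I_\X-\zeta\,\wh K(\zeta)]
\end{equation*}
by repeating the previous Z-transform computation for the homogeneous equation with impulse initial data. The resolvent sequence $X_K(n):=X_K(n,0)$ satisfies $X_K(0)=I_\X$, $X_K(m)=0_\X$ for $m<0$, and the convolution recurrence $X_K(n+1)=\sum_{j=0}^{n} K(j)\,X_K(n-j)$, so the Z-transform produces $\zeta^{-1}[\wh X_K(\zeta)-I_\X]=\wh K(\zeta)\,\wh X_K(\zeta)$, which is the left half of the identity. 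Because $I_\X-\zeta\,\wh K(\zeta)$ has constant term $I_\X$, it is invertible in $\L(\X)[[\zeta]]$ (by the formal geometric series), so $\wh X_K(\zeta)$ is its two-sided inverse, yielding both halves. Given this, the equivalence is immediate: multiplying (ii) on the left by $\wh X_K(\zeta)$ gives (iii), and multiplying (iii) on the left by $I_\X-\zeta\,\wh K(\zeta)$ gives (ii); the side condition $x(0)=0_\X$ is common to both.

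There is no substantial obstruction here: the proof is routine bookkeeping with formal power series over $\L(\X)$. The only subtle point is the shift identity $\wh x(\zeta)=\zeta\,(\ShEl^\T x)\wh{\,}(\zeta)$, which is valid precisely because of the assumption $x(0)=0_\X$ that is built into each of the three statements.
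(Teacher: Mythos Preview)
Your argument for (i) $\Leftrightarrow$ (ii) is exactly the paper's, including the key shift identity $\wh x(\zeta)=\zeta\,(\ShEl^\T x)\wh{\ }(\zeta)$ under $x(0)=0_\X$.

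For the remaining equivalence you take a different route. The paper does not go through (ii) $\Leftrightarrow$ (iii); instead it proves (i) $\Leftrightarrow$ (iii) directly from the convolution representation $(\IS_K f)(n)=\sum_{j=0}^{n-1} X_K(n-j-1)\,f(j)$ (formula (\ref{e IS=sumQ}) specialized to the time-invariant case), which under the Z-transform becomes (iii) immediately. You instead first establish the two-sided inverse identity $[I_\X-\zeta\,\wh K(\zeta)]\,\wh X_K(\zeta)=I_\X$ by Z-transforming the resolvent recurrence, and then pass between (ii) and (iii) by multiplication. This is correct, and your invertibility argument (constant term $I_\X$, hence a two-sided unit in $\L(\X)[[\zeta]]$) is clean. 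The practical difference is one of packaging: in the paper the inverse identity is the content of the \emph{next} lemma (Lemma \ref{l whK=whXK new}), proved by testing Lemma \ref{l IS equiv} on delta inputs; your proof folds that identity into the present lemma via a direct computation, making this step self-contained at the cost of duplicating what the paper isolates as a separate result.
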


\begin{proof}
\textbf{(i) $\Leftrightarrow$ (ii)}. System (\ref{e Keq f})
implies $(\ShEl^\T x) (n)= (K * x) (n) + f (n) $, where '*' stands for convolution. Applying the Z-transform
and taking into account the fact that $\wh x(\zeta) = \zeta (\ShEl^\T x)\wh \ \, (\zeta)$ for $x$ such that $x (0) =
0_\X$, we get $(\ShEl^\T x)\wh \ \, (\zeta) =
\zeta \wh K (\zeta) \ (\ShEl^\T x)\wh \ \, (\zeta) + \wh f (\zeta) $.   This yields (ii).
Inverting the above calculations, we see that (ii) $\Rightarrow$ (i).

The equivalence \textbf{(i) $\Leftrightarrow$ (iii)} follows from (\ref{e IS=sumQ}), which, for system (\ref{e Keq f}),
 takes the form
\begin{equation} \label{e IS K conv}
(\IS_K f) (n) = \sum_{j=0}^{n-1} X_K (n-j-1) f (j) .
\end{equation}
\end{proof}

Put 
\[
R_{\min} := \min \{ R (\wh K), R (\wh X_K) \} .
\]

\begin{lemma}  \label{l whK=whXK new}
$(i)$ $I_\X - \zeta \wh K (\zeta)$ and $\wh X_K (\zeta)$ are two-sided inverses
to each other in the ring of formal power series, i.e.,
\begin{equation} \label{e K XK inv}
 [I_\X - \zeta \wh K (\zeta)] \wh X_K (\zeta) \equiv \wh X_K (\zeta) [I_\X - \zeta \wh K (\zeta)] \equiv I_\X .
\end{equation}

$(ii)$ For all $\zeta \in \DD (R_{\min})$, the operator $I_\X - \zeta \wh K (\zeta) $ is boundedly invertible and
$
[I_\X - \zeta \wh K (\zeta)]^{-1} = \wh X_K (\zeta) .
$
\end{lemma}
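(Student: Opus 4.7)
The plan is to derive a convolution recurrence for the Toeplitz fundamental matrix $X_K(\cdot)$, apply the Z-transform to obtain a right-sided identity, and then separately handle two-sidedness (part (i)) and analyticity/bounded invertibility on $\DD(R_{\min})$ (part (ii)).

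First, I would exploit that system (\ref{e Keq}) has time-invariant coefficients to write $X_K(n,\tau) = X_K(n-\tau)$. For $\psi \in \X$, the solution $x(\cdot,\tau,P_0^\T\psi)$ satisfies $x(\tau) = \psi$ and $x(\tau-k) = 0_\X$ for all $k \ge 1$, so substituting into (\ref{e Keq}) and truncating the sum at the vanishing entries gives, after the shift $m = n-\tau$,
\begin{equation*}
X_K(m+1) = \sum_{j=0}^{m} K(j) X_K(m-j), \qquad m \ge 0, \qquad X_K(0) = I_\X.
\end{equation*}
Multiplying by $\zeta^{m+1}$ and summing over $m \ge 0$ collapses the left side to $\wh X_K(\zeta) - I_\X$, while the right side, being a Cauchy product in the formal power series ring $\L(\X)[[\zeta]]$, becomes $\zeta \wh K(\zeta) \wh X_K(\zeta)$. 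Thus
\begin{equation*}
[I_\X - \zeta \wh K(\zeta)]\, \wh X_K(\zeta) = I_\X
\end{equation*}
as formal power series.

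To upgrade this right-sided identity to (i), I would invoke the standard fact that a formal power series $A(\zeta) \in \L(\X)[[\zeta]]$ whose constant term $A_0$ is invertible in $\L(\X)$ admits a unique two-sided inverse, constructed coefficient-by-coefficient by solving the lower-triangular recursion $B_n = -A_0^{-1}\sum_{k=1}^{n} A_k B_{n-k}$ (and analogously from the left); left and right inverses then coincide by the usual $C = C A B = B$ argument. Since the constant coefficient of $I_\X - \zeta\wh K(\zeta)$ is $I_\X$, the right inverse $\wh X_K(\zeta)$ established above is automatically the two-sided inverse, which is (i).

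For (ii), observe that $R[\wh K] \ge R_{\min}$ and $R[\wh X_K] \ge R_{\min}$, so by (\ref{e R}) both power series converge absolutely in $\L(\X)$-norm on the open disc $\DD(R_{\min})$ and define $\L(\X)$-valued analytic functions there. Absolute convergence lets the Cauchy product computation be carried out in $\L(\X)$ pointwise, so the formal identity of part (i) becomes a genuine operator equation in $\L(\X)$ at each $\zeta \in \DD(R_{\min})$; equivalently, $\wh X_K(\zeta)$ is a two-sided inverse of $I_\X - \zeta\wh K(\zeta)$ in $\L(\X)$. This is exactly the bounded invertibility claim with $[I_\X - \zeta\wh K(\zeta)]^{-1} = \wh X_K(\zeta)$. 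The only point requiring care in the whole argument is the two-sidedness in the noncommutative power series ring in (i); the convergence step in (ii) is then routine.
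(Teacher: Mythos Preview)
Your proof is correct, but it follows a different route from the paper's. You derive the convolution recurrence for $X_K(\cdot)$ directly, take the Z-transform to obtain the right identity $[I_\X-\zeta\wh K(\zeta)]\wh X_K(\zeta)=I_\X$, and then appeal to the general fact that a formal power series over $\L(\X)$ with invertible constant term has coinciding left and right inverses. The paper instead leans on Lemma~\ref{l IS equiv}: since $\ShEl^\T\IS_K$ is a bijection of $\Ssp_+(\X)$, the two Z-transform characterisations (ii) and (iii) of that lemma can be tested with \emph{any} $x$ or $f$. Choosing $x=\{0_\X,\psi,0_\X,\dots\}$ produces the left identity $\wh X_K(\zeta)[I_\X-\zeta\wh K(\zeta)]\equiv I_\X$, and choosing $f=\{\psi,0_\X,\dots\}$ produces the right one; two-sidedness is thus obtained directly, without the abstract ring-theoretic step. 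Your approach is more self-contained and bypasses Lemma~\ref{l IS equiv} entirely, which is a genuine simplification; the paper's approach, on the other hand, extracts both identities by the same mechanism and ties the lemma into the input-state operator framework already in place. Part~(ii) is handled identically in both.
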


\begin{proof}
It is enough to prove (i), statement (ii) follows immediately from (i).
By (\ref{e IS bijec}), we can test statements (ii) and (iii) of
Lemma \ref{l IS equiv} with arbitrary $x \in \Ssp_+ (\X)$
satisfying $x(0)=0_\X$ or with arbitrary $f \in \Ssp_+ (\X)$.

Testing with $\{ x (0), x(1), x(2) ,  x(3) \dots \} = \{ 0_\X, \psi , 0_\X , 0_\X , \dots \}$,
where $\psi \in \X$ is arbitrary,
we see that:
\begin{itemize}
\item the power series  $(\ShEl^\T x)\wh \ (\zeta)$ has only the zero-order term $ \zeta^0 \psi\equiv \psi$,
\item $ [I_\X - \zeta \wh K (\zeta)] \ \psi \equiv \wh f(\zeta)$,
\item and $ \psi \equiv \wh X_K (\zeta) \ \wh f (\zeta)$.
\end{itemize}
 Combining the two last equalities, one gets $ \psi \equiv \wh X_K (\zeta) \ [I_\X - \zeta \wh K (\zeta)] \ \psi$.
This implies $ X_K (\zeta) [I_\X - \zeta \wh K (\zeta)] \equiv I_\X $.

Testing (ii) and (iii) of Lemma \ref{l IS equiv} with $\{ f (0), f(1), f(2) , \dots \} = \{ \psi , 0_\X , 0_\X , \dots \}$, we see that
$  [I_\X - \zeta \wh K (\zeta)] \wh X_K (\zeta) \equiv I_\X $.
\end{proof}

Note that (\ref{e ExSt XX}) implies the following equivalence
\begin{equation} \label{e R>1=UESXinX}
(\ref{e Keq}) \text{ is UES in the resolvent matrix sense} \ \ \Leftrightarrow \ \ R [\wh X_K] > 1.
\end{equation}

\begin{proof}[Proof of Theorem \ref{t UEScr ga>0}]
First recall that $R [\wh K] \geq e^\ga > 1$ since (\ref{e Keq}) is defined on $\B$ with $\ga>0$.

\textbf{(i) $\Rightarrow$ (iii)}. Plugging $ \vphi = \{ \dots, \vphi^{[-2]}, \vphi^{[-1]}, \vphi^{[0]} \} = \{ \dots, 0_\X, 0_\X, \psi \} $ into (\ref{e ExStX}), we see
that the condition of Definition \ref{d UES XX} is satisfied.

\textbf{(iii) $\Rightarrow$ (ii)}. By (\ref{e R>1=UESXinX}), $R [\wh X_K] >1$.  Thus,
$R_{\min} >1$ and Lemma \ref{l whK=whXK new} (ii) completes the proof.

\textbf{(ii) $\Rightarrow$ (iii)}. By \cite[Sec.
VII.6]{DSh58}, the set of $\zeta \in \DD (R [\wh K])$ such that $I_\X -
\zeta \wh K (\zeta)$ is boundedly invertible is open, and, moreover,
$[I_\X - \zeta \wh K (\zeta)]^{-1}$ is analytic on this set. Since
$R [\wh K]>1$, (ii) implies that $[I_\X - \zeta \wh K
(\zeta)]^{-1} $ is analytic in $\DD (\varrho)$ with certain $\varrho > 1$.
This and (\ref{e K XK inv}) imply $R [\wh X_K] \ge \varrho >1$, and, due to (\ref{e R>1=UESXinX}), statement (iii).

\textbf{(iii) $\Rightarrow$ (iv)}. It is enough to consider the case $q_1 = q_2=q$.
By (iii), $ X_K (n-j) = X_K (n,j) $ satisfy  (\ref{e ExSt XX}). In particular,
$X_K (\cdot) \in \ll^1 \left( \L (\X) \right)$.
Applying Young's inequality for convolutions (see e.g. \cite[Problem
VI.11.10]{DSh58}) to (\ref{e IS K conv}), one obtains
$ | (\IS_K  f) (\cdot) |_{\textstyle \ll^{q}} \leq  | X_K (\cdot)
|_{\textstyle \ll^1} | f (\cdot) |_{\textstyle \ll^q} $.

\textbf{(iv) $\Rightarrow$ (i)} due to Theorem \ref{t BP ga>0}.
\end{proof}

\begin{lemma} \label{l G bound inv}
Assume that $\varrho >1$ and that an $\L(\X)$-valued function $G$ is analytic in $\DD (\varrho)$, boundedly invertible in
$\DD (1)$, and $\sup_{\zeta \in \DD (1)} \| G^{-1} (\zeta) \|_{\X \to \X} = C < \infty$. 
Then, $G$ is boundedly invertible
in a certain open neighborhood of $\overline{\DD (1)}$.
\end{lemma}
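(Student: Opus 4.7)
The plan is to first upgrade the uniform bound on $G^{-1}$ inside $\DD(1)$ to bounded invertibility of $G(\zeta_0)$ at every boundary point $\zeta_0 \in \TT$, and then to promote this pointwise invertibility on the compact set $\overline{\DD(1)}$ to invertibility on an open neighborhood via Neumann series together with uniform continuity.

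For the boundary step, I would fix $\zeta_0 \in \TT$ and pick a sequence $\zeta_n \in \DD(1)$ with $\zeta_n \to \zeta_0$. Since $G$ is analytic on $\DD(\varrho)$ with $\varrho > 1$, it is continuous in the operator norm on $\overline{\DD(1)}$, so $G(\zeta_n) \to G(\zeta_0)$. Consider
\[
B_n := G^{-1}(\zeta_n) G(\zeta_0) = I_\X + G^{-1}(\zeta_n)\bigl[G(\zeta_0) - G(\zeta_n)\bigr].
\]
Using the uniform bound $\|G^{-1}(\zeta_n)\|_{\X\to\X} \leq C$, we get $\|B_n - I_\X\|_{\X\to\X} \leq C \|G(\zeta_0) - G(\zeta_n)\|_{\X\to\X} \to 0$. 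Hence, for all sufficiently large $n$, $B_n$ is boundedly invertible by a standard Neumann series argument and $\|B_n^{-1}\|_{\X\to\X} \leq 2$. Because $G(\zeta_n)$ is itself boundedly invertible and $G(\zeta_0) = G(\zeta_n)\,B_n$, we conclude that $G(\zeta_0)$ is boundedly invertible with $G^{-1}(\zeta_0) = B_n^{-1}\, G^{-1}(\zeta_n)$ and $\|G^{-1}(\zeta_0)\|_{\X\to\X} \leq 2C$. Thus $G$ is boundedly invertible on all of $\overline{\DD(1)}$ with a uniform bound $M := 2C$.

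For the neighborhood step, fix any $\epsilon > 0$ small enough that $1 + \epsilon < \varrho$. Since $G$ is analytic, hence continuous, on the compact set $\overline{\DD(1+\epsilon)}$, it is uniformly continuous there, so there exists $\delta \in (0,\epsilon)$ with $\|G(\zeta) - G(\zeta')\|_{\X\to\X} < 1/(2M)$ whenever $\zeta, \zeta' \in \overline{\DD(1+\epsilon)}$ and $|\zeta - \zeta'| < \delta$. For each $\zeta_0 \in \overline{\DD(1)}$ and $\zeta$ with $|\zeta - \zeta_0| < \delta$, factor
\[
G(\zeta) = G(\zeta_0)\bigl[I_\X + G^{-1}(\zeta_0)\bigl(G(\zeta) - G(\zeta_0)\bigr)\bigr],
\]
where the bracketed factor is invertible by Neumann series because $\|G^{-1}(\zeta_0)(G(\zeta) - G(\zeta_0))\|_{\X\to\X} < 1/2$. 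Hence $G(\zeta)$ is boundedly invertible. The open $\delta$-neighborhood of $\overline{\DD(1)}$ therefore lies in the invertibility set of $G$, which is the desired open neighborhood of $\overline{\DD(1)}$.

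The main obstacle is the first step: at a boundary point $\zeta_0 \in \TT$ one cannot invoke Neumann series centered at $\zeta_0$ directly, because invertibility there is exactly what is to be proved. The uniform bound $C$ on $G^{-1}$ throughout the open disk is essential precisely to carry out the limiting argument that forces $B_n \to I_\X$; without it, there would be no way to cross from invertibility inside $\DD(1)$ to invertibility on $\TT$. Once the boundary case is settled, the remainder is the routine Neumann-series-plus-compactness argument.
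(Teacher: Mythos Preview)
Your proof is correct, but the boundary step differs from the paper's argument. The paper passes to the limit in the lower bounds $|G(\zeta)\psi|_\X \ge C^{-1}|\psi|_\X$ and $|[G(\zeta)]^*\psi^*|_{\X^*} \ge C^{-1}|\psi^*|_{\X^*}$ (which hold on $\DD(1)$) to obtain the same inequalities at $\zeta_0 \in \TT$; from these it deduces $\ker G(\zeta_0)=\{0\}$, $\ker [G(\zeta_0)]^*=\{0\}$, and closedness of $\ran G(\zeta_0)$, hence bounded invertibility. Your route instead factors $G(\zeta_0)=G(\zeta_n)B_n$ with $B_n\to I_\X$ and invokes Neumann series, which is more elementary since it avoids dual spaces and the closed-range/image-density argument entirely. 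The price is only a slightly weaker explicit bound ($2C$ versus the paper's $C$) on $\|G^{-1}(\zeta_0)\|$, which is irrelevant for the lemma's conclusion. For the neighborhood step, the paper simply cites that the invertibility set of an analytic operator-valued function is open (Dunford--Schwartz) and uses compactness of $\overline{\DD(1)}$; your explicit uniform-continuity-plus-Neumann-series argument is just a spelled-out version of the same fact.
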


\begin{proof}
It follows from the assumptions, that $C>0$ and the following inequalities hold
\begin{equation} \label{e G>||}
| G(\zeta) \psi |_\X \geq C^{-1} |\psi |_\X  , \ \ 
\left| [G(\zeta)]^* \psi^* \right|_{\X^*} \geq C^{-1} |\psi^* |_{\X^*} \
\text{ for all } \zeta \in \DD (1), \psi \in \X, \psi^* \in \X^* .
\end{equation}
For arbitrary $\zeta_0 \in \TT \ = \{ |z| =1 \}$, let us take $\{ \zeta_n \} \subset \DD (1)$
such that $\zeta_n \to \zeta_0$ as $n \to \infty$. Passing to the limit in (\ref{e G>||}) and using the continuity of $G$, we get
for $\zeta_0 \in \TT$,
\begin{equation} \label{e G>|| T}
| G(\zeta_0) \psi |_\X \geq C^{-1} |\psi |_\X
\text{ and } \left| [G(\zeta_0)]^* \psi^* \right|_{\X^*} \geq C^{-1} |\psi^* |_{\X^*} .
\end{equation}
This implies that $\ker G (\zeta_0) = \{0_\X\}$ and $\ker [G (\zeta_0)]^* = \{0_{\X^*} \}$. The latter equality
also implies $ \overline{\ran G (\zeta_0)} = \X$ (see \cite[Lemma VI.2.8]{DSh58}). On the other hand, (\ref{e G>|| T})
yields $ \overline{\ran G (\zeta_0)} = \ran G (\zeta_0) $ (see \cite[Exercise VI.9.15]{DSh58}).
Hence $G (\zeta_0)$ is a self-bijection of $\X$. By (\ref{e G>|| T}),  $[G (\zeta_0)]^{-1}$ is bounded.

Thus, $G(\zeta)$ is boundedly invertible for all  $\zeta \in \overline{\DD (1)}$.
The set of $\zeta$, where $G (\zeta) $ is invertible with a bounded inverse, is open in $\DD (\varrho)$
(see \cite[Lemma VII.6.1]{DSh58}).
So $G $ is boundedly invertible on $\DD (R_1)$ with certain $R_1 \in (1,\varrho)$.
\end{proof}

\begin{proposition}\label{e Kl1=>RK>1}
Let (\ref{e Keq}) be defined on $\Bo^{\infty,0}$ and be UES in the resolvent matrix sense.
Then, the discrete function $K (\cdot) $ decays exponentially.
\end{proposition}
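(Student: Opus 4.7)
The plan is to apply Lemma \ref{l G bound inv} to $G(\zeta) = \wh X_K(\zeta)$, and then to use the reciprocal relation of Lemma \ref{l whK=whXK new} to transfer the conclusion to $\wh K$.

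UE stability in the resolvent matrix sense gives $\|X_K(n)\|_{\X \to \X} \le C e^{-\nu n}$, so $R[\wh X_K] > 1$ and $\wh X_K$ is analytic on some disc $\DD(\varrho)$ with $\varrho > 1$. On the other hand, $K \in \L(\Bo^{\infty,0}, \X)$ combined with Lemma \ref{l Kn est} (for $\ga = 0$) gives $R[\wh K] \ge 1$, so $\wh K$ is at least analytic on $\DD(1)$. By Lemma \ref{l whK=whXK new}(ii), $[\wh X_K(\zeta)]^{-1} = I_\X - \zeta \wh K(\zeta)$ for every $\zeta \in \DD(1)$.

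Next I would establish the uniform bound $\|\wh K(\zeta)\|_{\X \to \X} \le M := \|K\|_{\Bo^{\infty,0} \to \X}$ on $\DD(1)$. For fixed $\zeta \in \DD(1)$ and $\psi \in \X$, the tuple $\vphi = \{\zeta^{-m} \psi\}_{m \le 0}$ lies in $\Bo^{\infty,0}$ (since $|\zeta|<1$) with $|\vphi|_{\Bo^{\infty,0}} = |\psi|_\X$. By Remark \ref{r p=inf repr}, $K\vphi = \sum_{j=0}^{+\infty} K(j)\,\zeta^j \psi = \wh K(\zeta) \psi$, whence $|\wh K(\zeta)\psi|_\X \le M |\psi|_\X$. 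Consequently $\|[\wh X_K(\zeta)]^{-1}\|_{\X \to \X} \le 1 + M$ uniformly on $\DD(1)$. Applying Lemma \ref{l G bound inv} to $G = \wh X_K$, we conclude that $\wh X_K$ is boundedly invertible on some open disc $\DD(R_1)$ with $R_1 > 1$, and that $[\wh X_K]^{-1}$ is analytic there.

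Finally, since $\wh X_K(0) = X_K(0) = I_\X$, the function $F(\zeta) := \zeta^{-1}\bigl(I_\X - [\wh X_K(\zeta)]^{-1}\bigr)$ has a removable singularity at the origin and extends analytically to all of $\DD(R_1)$. On $\DD(1)$ this extension agrees with $\wh K(\zeta)$ by the inverse relation above, so the Taylor coefficients of $F$ at $0$ are exactly $K(j)$. Therefore $R[\wh K] \ge R_1 > 1$, which is the claimed exponential decay of $K(\cdot)$. The delicate step is the uniform bound $\|\wh K(\zeta)\|_{\X \to \X} \le M$ on $\DD(1)$: the crude coefficient bound $\|K(n)\|_{\X \to \X} \le \|K\|$ coming from Lemma \ref{l Kn est} alone would only yield $\|\wh K(\zeta)\| \le M/(1-|\zeta|)$, which blows up as $|\zeta|\to 1$. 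The choice of test tuples $\{\zeta^{-m}\psi\}_{m \le 0}$, whose $\Bo^{\infty,0}$-norm is governed by a supremum rather than a sum, exploits the specific geometry of $\Bo^{\infty,0}$ to bypass this obstruction.
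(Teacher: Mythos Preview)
Your proof is correct and follows essentially the same route as the paper's: bound $\|\wh K(\zeta)\|$ uniformly on $\DD(1)$ via the test tuples $\{\zeta^{-m}\psi\}$, apply Lemma~\ref{l G bound inv} to $\wh X_K$, and read off the analytic continuation of $\wh K$ from $\zeta^{-1}(I_\X - [\wh X_K(\zeta)]^{-1})$. Your added remarks on the removable singularity at $\zeta=0$ and on why the naive coefficient bound from Lemma~\ref{l Kn est} would fail are good clarifications beyond what the paper spells out.
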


\begin{proof}
By the assumptions, $R [\wh X_K] > 1$ and $K \in \L (\Bo^{\infty,0},\X)$. Hence, $R [\wh K] \geq 1$. So for all $\zeta \in \DD (1)$,
(\ref{e K XK inv}) holds true and
\begin{equation} \label{e |X|<1+|K|}
\| \, [\wh X_K (\zeta)]^{-1} \|_{\X \to \X} \leq 1 + \| \wh K (\zeta ) \|_{\X \to \X} .
\end{equation}
On the other hand, for $\zeta \in \DD (1)$, $\psi \in \X$, and $\vphi :=  \{ \zeta^{-m} \psi \}_{m=-\infty}^0$, one has
$| \vphi |_{\Bo^{\infty,0}} \leq | \psi |_\X $ and
\[
| \wh K (\zeta ) \psi |_\X = \left| \sum_{j=0}^{+\infty} \zeta^j K (j) \psi \right|_\X =
| K \vphi |_\X \leq \| K \|_{\Bo^{\infty,0} \to \X} |\psi |_\X .
\]
So $ \sup_{\zeta \in \DD (1)} \| \wh K (\zeta ) \|_{\X \to \X} \leq  \| K \|_{\Bo^{\infty,0} \to \X} < \infty$.
Due to (\ref{e |X|<1+|K|}),
$ \sup_{\zeta \in \DD (1)} \| \, [\wh X_K (\zeta)]^{-1} \, \|_{\X \to \X} < \infty$.

Hence, we can apply Lemma \ref{l G bound inv}
to the $\L(X)$-valued function $\wh X_K $. We see that $\wh X_K$ is boundedly invertible on
$\DD (\varrho)$ with certain $\varrho > 1$.
By (\ref{e K XK inv}), the function $\zeta^{-1} \left( I_\X - [\wh X_K (\zeta)]^{-1} \right)$ is
an analytic continuation of $\wh K (\zeta)$ from $\DD (1)$ to $\DD (\varrho)$. Thus, $R [\wh K] \ge \varrho > 1$.
In other words, $K (\cdot) $ decays exponentially.
\end{proof}

\begin{proof}[Proof of Theorem \ref{t UEScr ga=0}]
\textbf{(iii) $\Rightarrow$ (ii)}. By (\ref{e R>1=UESXinX}), $ R [ \wh X_K ] > 1 $. By Proposition \ref{e Kl1=>RK>1},
$K(\cdot)$ decays exponentially.
From (\ref{e RwhK>1 equiv}), we see that there exists $ \ga_0 >0 $ such that (\ref{e Keq}) is defined on $\B^{p,\ga}$
for all $(p,\ga) \in [1,\infty] \times (0,\ga_0]$. So the assumption of Theorem \ref{t UEScr ga>0} is
fulfilled and the UE stability in the resolvent matrix sense implies the UE stability in $\X$ w.r.t. $\B^{p,\ga}$
for all $(p,\ga) \in [1,\infty] \times (0,\ga_0]$.

Proposition \ref{p st emb} proves the implication \textbf{(ii) $\Rightarrow$ (i)}.
For the implication \textbf{(i) $\Rightarrow$ (iii)} see the proof of Theorem \ref{t UEScr ga>0} (i) $\Rightarrow$ (iii).
\end{proof}

\subsection{Examples and remarks}
\label{ss Ex and Rem conv}

\begin{remark}
In the case $\X = \CC^n$ the proof of Proposition \ref{e Kl1=>RK>1}
can be simplified and Lemma \ref{l G bound inv}
is not needed. The reason is the obvious fact that,
 \begin{equation} \label{e Keq def = l1}
\text{for } \X = \CC^n, \text{ system (\ref{e Keq}) is
defined on } \Bo^{\infty,0} \text{ exactly when } K(\cdot) \in \ll^1 (\L(\X)).
\end{equation}
The proof of Proposition \ref{e Kl1=>RK>1} can be simplified in the following way (cf. \cite[Theorem 4]{EM96}): $K(\cdot) \in \ll^1 (\L(\X))$ yields that
$\wh K (\zeta)$ is convergent and uniformly bounded in $\overline{\DD (1)}$. Under the assumptions of Proposition \ref{e Kl1=>RK>1},
one can see that $\wh X (\zeta) $ is invertible in $\overline{\DD (1)}$, an so in an open neighborhood of $\overline{\DD (1)}$.
This means that $\wh K (\zeta)$ is convergent in an open neighborhood of $\overline{\DD (1)}$.
Thus, $K(\cdot)$ decays exponentially.
\end{remark}

For infinite-dimensional $\X$, \emph{the condition that (\ref{e Keq}) is
defined on $\Bo^{\infty,0}$ does not imply $K(\cdot) \in \ll^1 (\L(\X))$}. This is shown by the following example.

\begin{example} \label{ex c0}
Let $\X = c_0$, where $c_0$ is the usual Banach space of all convergent to zero sequences $a=[a_{[n]}]_{n=0}^{+\infty}=
[a_{[0]}, a_{[1]}, a_{[2]}, \dots ]$ of complex numbers.
Define an operator $K \in \L (\Bo^{\infty,0}, c_0 )$ by
 \begin{equation} \label{e K diag}
 K \vphi = \left[ (\vphi^{[0]})_{[0]} , (\vphi^{[-1]})_{[1]}, (\vphi^{[-2]})_{[2]}, \dots \right] .
\end{equation}
That is,
$  K \vphi = \sum_{m=-\infty}^0 K (-m) \vphi^{[m]} $ with $K(n) \in \L (c_0)$ defined by
$(K(n) a)_{[k]} = \delta_{nk} a_{[n]}$. Here $\delta_{nk}$ is  Kronecker's delta, and the infinite sum is understood
in the strong topology of $c_0$.

So $K$ is a bounded operator from $\Bo^{\infty,0}$ to $c_0$. System (\ref{e Keq}) is
defined on $\Bo^{\infty,0}$. On the other hand, $\| K(n) \|_{c_0 \to c_0} =1$ and so $ K(\cdot) \not \in \ll^1 $.
\end{example}

The following modification of the last example shows that the convolution system (\ref{e Keq}) can be defined on
$\B^{\infty,0}$ under weaker
assumptions on $K(j)$ than (\ref{e as K=sum}), and that such wider settings may sometimes be more natural.

\begin{example} \label{ex l inf}
Let $\X = \ll^\infty (\ZZ^+, \CC )$ be the Banach space of bounded sequences $[a_{[n]}]_{n=0}^{+\infty}$
of complex numbers. Let $K \in \L (\B^{\infty,0}, \X)$ be defined by (\ref{e K diag}).
Consider the corresponding discrete function $K(\cdot)$ (see (\ref{e def G dot}) for the definition).
Then $K(n)$ are defined as in Example \ref{ex c0}, but the infinite sum
$\sum_{m=-\infty}^0 K (-m) \vphi^{[m]} $  is divergent in the strong topology of $\X$  whenever
$\vphi \in \B^{\infty,0}$ does not satisfy $\lim_{j\to+\infty} (\vphi^{[-j]})_{[j]} = 0 $.
However, the representation $  K \vphi = \sum_{m=-\infty}^0 K (-m) \vphi^{[m]} $ still holds true for all
$\vphi \in \B^{\infty,0} $ if the sum is understood in the
weak$^*$ topology of $\X$.
(Concerning the representation $  K \vphi = \sum_{m=-\infty}^0 K (-m) \vphi^{[m]} $  in
the weak topology of $\X$, we refer a reader to the criterion of weak convergence \cite[Theorem 8.1.1]{S50}, \cite[Theorem IV.6.31]{DSh58}.)
\end{example}

\begin{remark}
Theorem \ref{t UEScr ga>0} remains valid if the additional assumption (\ref{e as K=sum}) is dropped
(i.e., it is valid for systems $x(n+1) = K x_n$ defined on $\B^{\infty,\ga}$).
Indeed, let the system $x(n+1) = K x_n$ be defined on $\B^{\infty,\ga}$ with $\ga>0$.
Then (\ref{e as K=sum}) holds for every $\vphi \in B^{p,\ga_1}$ with $\ga_1 \in (0,\ga)$.
In other words, on the narrower space $B^{p,\ga_1}$, the system $x(n+1) = K x_n$ takes the convolution form (\ref{e Keq}).
Therefore the equivalencies (ii) $\Leftrightarrow$ (iii) $\Leftrightarrow$ (iv) of Theorem \ref{t UEScr ga>0} hold true.
By Theorem \ref{e BPcr}, the equivalence (i) $\Leftrightarrow$ (iv) holds for the system $x(n+1) = K x_n$ on the original
phase space $\B^{\infty,\ga}$. This completes the proof.
\end{remark}

\begin{remark} \label{r comp ga>0}
Under the assumption $\sum_{j=1}^{+\infty} \| e^{j\ga} K (j)  \|_{\X \to \X} < \infty$,
system (\ref{e Keq}) was studied in \cite{FMN04_JMJ} in the settings of the phase space $\B^{\infty,\ga}$ with $\ga>0$.
In particular, the implication (ii) $\Rightarrow$ (iii) and the equivalence (i) $\Leftrightarrow$ (iii)
of Theorem \ref{t UEScr ga>0} were proved. The implication (iii) $\Rightarrow$ (ii) was proved for the case when all operators
$K(j)$ are compact. This compactness assumption is superfluous.
\end{remark}

\begin{remark} \label{r comp ga=0}
Under the assumption that $K(j)$ are compact operators and $\sum_{j=1}^{+\infty} \| K (j) \|_{\X \to \X} < \infty$,
it was shown in \cite[Theorem 4]{FMN04_JDEA} that (\ref{e Keq}) is UES in $X$ w.r.t. $\B^{\infty,0}$ exactly when (\ref{e Keq})
is uniformly asymptotically stable (UAS) and $K(\cdot) $ decays exponentially.
The result of \cite[Theorem 2]{FMN04_JDEA} and \cite[Theorem 1]{MN05} extend the criteria of UA stability of \cite{EM96}
to Banach space settings imposing the compactness assumption on $K(j)$ (see also \cite{MN08} for related results on positive systems).
In addition, \cite[Remark 1]{FMN04_JDEA} and \cite[Remark 1]{MN05} discuss the problem of removing the compactness assumption.
While our paper is concerned with UE stability, and so does not directly address the problem of \cite[Remark 1]{MN05}, Theorem \ref{t UEScr ga=0} and its proof may shed some light on this problem since they do not require
the compactness of operators $K(j)$ for a very kindred question of UE stability.
\end{remark}

The following example shows that the condition that system (\ref{e Keq})
is defined on $\Bo^{\infty,0}$ or $\B^{p,\ga}$ with $\ga>0$
cannot be dropped in Theorems \ref{t UEScr ga>0} and \ref{t UEScr ga=0}.
It also shows that the condition that $K(\cdot)$ decays exponentially
cannot be omitted in Corollary \ref{c BPcr ga=0}.

\begin{example}
Take $\X =\CC$ and $K(j) = - 2^{j+1}$. This leads to the system
\begin{equation} \label{e ex 2 j+1}
x(n+1) = - \sum_{j=0}^\infty 2^{j+1} x (n-j) ,
\end{equation}
which has the following properties.
\item[(i)] $R [\wh K] =1/2$ and system (\ref{e ex 2 j+1}) is not defined in $\B^{p,\ga}$ whenever $\ga > - \ln 2$ (for arbitrary $p$).
So (\ref{e ex 2 j+1}) is not UES in $\X$ w.r.t. these spaces.
\item[(ii)] $\IS_K \in \L (\ll^q )$ for each $1 \leq q \leq \infty$.
\item[(iii)] (\ref{e ex 2 j+1}) is UES in the resolvent matrix sense.

Assertion (i) is obvious. To check (ii) and (iii), note that
a solution $x (\cdot)$ to the nonhomogeneous system
$
x(n+1) = - \sum_{j=0}^n 2^{j+1} x (n-j) + f(n),  \ x(0)=0,
$
is given by
$x(1) = f(0)$ and $x(n) = f(n-1) -2 f(n-2)$, $n \geq 2$. In particular, $X_K (1) = -2$, $X_K (n) = 0$ for $n \geq 2$.
\end{example}

\section{Stability radii for various classes of perturbations}
\label{s pert}

\subsection{Definitions, a feedback scheme with delayed output}
\label{ss r def}

Let system (\ref{e Keq}) be defined on a phase space $\B$.
We consider linear time-invariant and time-varying structured perturbations of (\ref{e
Keq}) on $\B$. The structure of perturbations is described by the
operators $E \in \L (\B,\U_1)$ and $D \in \L (\U_2 , \X)$, where
$\U_{1,2}$ are auxiliary Banach spaces.

The perturbations of the following types are considered:
\begin{description}
\item[(Sc)] $  x(n+1)=\sum_{j=0}^{+\infty} \ K(j) \ x(n-j) + D \ \De \ E x_n $,
\item[(St)] $  x(n+1)=\sum_{j=0}^{+\infty} \ K(j) \ x(n-j) + D \ \De (n) \ E x_n $.
\end{description}
The corresponding \emph{disturbance (or unknown feedback) mappings} $\De$ and $\De (n)$ have
the following properties:
\begin{description}
\item[(Pc)] $ \De  \in \L (\U_1,\U_2)$ is a time-invariant disturbance operator.
\item[(Pt)] $ \De (\cdot) \in \ll^\infty \left( \L (\U_1,\U_2) \right) $ is an operator-valued function describing time-varying linear disturbances.
\end{description}

The perturbed systems (Sc)-(St) can be interpreted as \emph{feedback systems with delayed output}, see Fig.\ref{fig1}.
Note that the output $y (n) = E x_n$ depends on the prehistory $x_n = \{ x (n+m) \}_{m=-\infty}^{0}$ (delayed output)
and that the input $v(n)$ is connected with the output by $ v(n) = N(n) y (n)$, where
an unknown operator $N (n)$ of feedback is given by $\De$ or $\De (n)$, respectively.
$U_2$ ($U_1$) turns into the \emph{input  (resp., output)} space.

%

\begin{definition} \label{d IO K}
\emph{The input-output operator} $\IO_{K}: \Ssp_+ (\U_2) \to \Ssp_+ (\U_1)$
corresponding to Fig.\ref{fig1} is defined by $\IO_K : v (\cdot) \to y (\cdot)$,  where
$y (n) = E \left(\{ x (n+m) \}_{m=-\infty}^0 \right)$ and $x (\cdot)$ is the solution to the
system
\begin{equation} \label{e Keq in}
x(n+1)=\sum_{j=0}^{n} \ K(n-j) \ x(j) + D v (n) , \quad n\geq 0, \ \
\ x_0 =0_\B .
\end{equation}
\end{definition}

\begin{definition} \label{d rUES}
\emph{The (UE) stability radius} of (\ref{e Keq})  w.r.t. perturbations of the
structure $(D,E)$, the disturbances of the class (Pc), and the phase
space $\B$ is defined by
\begin{equation*} \label{e def r c}
\r_\c (K; D , E; \B)  =  \inf \{ \| \De \|_{\U_1 \to \U_2} \  : \ \De \in \L (\U_1,\U_2), \text{ and \textrm{(Sc)}
is not UES } \}.
\end{equation*}
Usually, we will drop $K$ in this notation.
The stability radius $\r_\t (D , E; \B)$ w.r.t. the
disturbances of the class (Pt) is defined in the
analogous way
\begin{equation*} \label{e def r t}
\r_\c (D , E; \B)  =  \inf \{ | \De (\cdot) |_{\ll^\infty } \  : \ \De (\cdot) \in \ll^\infty \left( \L
(\U_1,\U_2) \right), \text{ and \textrm{(St)} is not UES } \}.
\end{equation*}
\end{definition}

Identifying an operator $\Delta \in \L (\U_1,\U_2)$ with the constant discrete
function $\{ \Delta, \Delta, \cdots \}$, one gets a norm-preserving embedding
$\L (\U_1,\U_2) \subset \ll^\infty \left( \L (\U_1,\U_2) \right)$.
This implies
\begin{equation} \label{e rc>rt>rnt}
 \r_\c (D , E; \B) \ge \r_\t (D , E; \B)  .
\end{equation}

It follows from Proposition \ref{p st emb} that, for phase spaces $\B$ and $\B_1$,
\begin{equation} \label{e r emb}
\text{continuous embedding } \B_1 \subset \B \ \Longrightarrow \
\r_i (D , E; \B_1) \ge \r_i (D , E;\B) , \ \ \text{ where } i = \c, \t .
\end{equation}

\subsection{Main results: stability radii in $\Bo^{\infty,0}$ and in $\B^{p,\ga}$ with $\ga > 0$}
\label{ss radii est}

By the operator $E \in \L (\B,\U_1)$, we define a function $E(\cdot)$ and the associated Z-transform $\wh E(\cdot)$ in the way shown by (\ref{e def G dot}) .

\begin{theorem} \label{t r ga>0}
Let $\ga >0$ and $1 \leq q \leq \infty$. Let $\B=\Bo^{\infty,\ga}$ or $\B=\B^{p,\ga}$ with $1 \le p \le \infty$.
Let (\ref{e Keq}) be UES in $\X$ w.r.t. $\B$.
Then $\IO_K \in \L \left( \ll^q (\U_2), \ll^q (\U_1)\right) $ and
\begin{multline}
 \Bigl( \max_{|\zeta|=1} \| \wh E (\zeta)  [I_\X - \zeta  \wh K (\zeta) ]^{-1} D \|_{\U_2 \to \U_1} \Bigr)\!^{-1} =
 \r_\c (D , E; \B) \ge \r_{\t} (D , E; \B) \ge
 \| \IO_K \|_{ \ll^q (\U_2) \to \ll^q (\U_1)}^{-1}  .\label{e rGAS>}
\end{multline}

If, additionally, $q=2$ and $\X$, $\U_1$, $\U_2$ are Hilbert spaces,
then (\ref{e rGAS>}) holds with equalities, i.e.,
\begin{multline}
 \Bigl( \max_{|\zeta|=1} \| \wh E (\zeta)  [I_\X - \zeta  \wh K (\zeta)]^{-1} D \|_{\U_2 \to \U_1} \Bigr)\!^{-1} =
 \r_\c (D , E; \B)  = \r_\t (D , E; \B)
 =
 \| \IO_K \|_{\ll^2 (\U_2) \to \ll^2 (\U_1)}^{-1}  .
\label{e r=}
\end{multline}
\end{theorem}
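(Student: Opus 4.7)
The proof splits into four steps: (a) boundedness of $\IO_K$ on $\ll^q$, (b) the explicit formula for $\r_\c$, (c) the small-gain lower bound $\r_\t \ge \|\IO_K\|^{-1}$, and (d) collapsing the chain in the Hilbert $q=2$ case. For (a), by Theorem \ref{t UEScr ga>0} the assumed UES of (\ref{e Keq}) makes $\IS_K$ bounded on $\ll^q(\X)$; since $\IO_K$ sends $v$ to $\{E x_n\}_n$ with $x = \IS_K(Dv)$, and $E x_n = \sum_{j=0}^{n} E(j) x(n-j)$ is causal convolution with an exponentially decaying $\ll^1$-kernel (Lemma \ref{l Kn est} applied with $\ga > 0$), Young's inequality yields the claimed boundedness.

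For (b), observe that (Sc) is again a convolution system, with kernel $K_\Delta(j) = K(j) + D \Delta E(j)$ and Z-transform $\wh K_\Delta(\zeta) = \wh K(\zeta) + D \Delta \wh E(\zeta)$. By Theorem \ref{t UEScr ga>0}, (Sc) is UES iff $I_\X - \zeta \wh K_\Delta(\zeta)$ is boundedly invertible for every $\zeta \in \overline{\DD(1)}$. Factoring
\[
I_\X - \zeta \wh K_\Delta(\zeta) = \bigl(I_\X - \zeta \wh K(\zeta)\bigr)\bigl(I_\X - \zeta (I_\X - \zeta \wh K(\zeta))^{-1} D \Delta \wh E(\zeta)\bigr),
\]
and using the standard equivalence of invertibility of $I - AB$ and of $I - BA$, failure is equivalent to non-invertibility of $I_{\U_2} - \zeta \Delta \wh E(\zeta)(I_\X - \zeta \wh K(\zeta))^{-1} D$ on $\U_2$. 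The Hahn--Banach identity $\inf\{\|\Delta\| : I - \Delta T \text{ not boundedly invertible}\} = \|T\|^{-1}$ (as in \cite{WH94}) then yields
\[
\r_\c(D,E;\B)^{-1} = \sup_{|\zeta| \le 1} |\zeta|\,\|\wh E(\zeta)(I_\X - \zeta \wh K(\zeta))^{-1} D\|_{\U_2 \to \U_1};
\]
the maximum modulus principle, applied to this operator-valued function analytic in $\overline{\DD(1)}$ (Theorem \ref{t UEScr ga>0}), places the supremum on $|\zeta|=1$. The intermediate bound $\r_\c \ge \r_\t$ is (\ref{e rc>rt>rnt}).

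For (c), let $\Delta(\cdot) \in \ll^\infty(\L(\U_1,\U_2))$ with $|\Delta(\cdot)|_{\ll^\infty} < \|\IO_K\|^{-1}$ and write $M_\Delta$ for multiplication by $\Delta(\cdot)$ on $\ll^q(\U_1)$. The nonhomogeneous version of (St) with input $g$ and $x_0 = 0_\B$ can be rewritten as $x = \IS_K(D M_\Delta y + g)$ with $y(n) = E x_n$, leading to the fixed-point identity $(I - \IO_K M_\Delta) y = \Psi g$, where $\Psi : \ll^q(\X) \to \ll^q(\U_1)$, $g \mapsto \{E(\IS_K g)_n\}_n$, is bounded by the argument of step (a). Since $\|\IO_K M_\Delta\| < 1$, a Neumann-series inversion gives $y$, and then $x$, in $\ll^q$, each bounded in terms of $\|g\|_{\ll^q}$. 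Hence the unstructured input-state operator of (St) is $\ll^q \to \ll^q$ bounded; combined with the trivial bound $\sup_n \|K + D \Delta(n) E\|_{\B \to \X} < \infty$, Theorem \ref{t BP ga>0} delivers UES of (St), so $\r_\t \ge \|\IO_K\|^{-1}$.

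Finally, for (d), $\IO_K$ is a causal convolution whose Z-transform is the transfer function $T(\zeta) = \zeta \wh E(\zeta)(I_\X - \zeta \wh K(\zeta))^{-1} D$; for Hilbert $\X, \U_1, \U_2$ the Plancherel theorem on $\TT$ gives
\[
\|\IO_K\|_{\ll^2(\U_2) \to \ll^2(\U_1)} = \max_{|\zeta|=1}\|T(\zeta)\| = \max_{|\zeta|=1}\|\wh E(\zeta)(I_\X - \zeta \wh K(\zeta))^{-1} D\|,
\]
forcing the chain (\ref{e rGAS>}) to collapse to the equalities in (\ref{e r=}). The main technical obstacle I expect is step (c): verifying rigorously that the fixed-point identity $y = \IO_K M_\Delta y + \Psi g$ actually holds in $\ll^q$ requires some care with causality and with the infinite sums defining $E x_n$, as does showing $\Psi$ is bounded in the non-Hilbert case. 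A secondary subtlety in step (b) is that the Hahn--Banach trick must produce a \emph{time-invariant} $\Delta$ realising $\|T_{\zeta_0}\|^{-1}$ at a prescribed $\zeta_0 \in \TT$; in general Banach $\U_{1,2}$ this requires an explicit rank-one construction.
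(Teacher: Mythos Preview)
Your proof is correct but takes a genuinely different route from the paper's. The paper (Section~\ref{ss reduction}) proceeds by \emph{reduction of order}: it rewrites (\ref{e Keq}) as a first-order system $w(n+1)=Aw(n)$ on $\W=\B$ with $A=P_0^\T K+\ShB$, translates the structure to $\wt D=P_0^\T D$, $\wt E=E$, and then invokes the first-order stability-radius results of \cite{WH94} (supplemented by Theorem~\ref{t r>L fo} for general $q$). A resolvent computation (Proposition~\ref{p resolvent}) identifies $\wt E(\la I_\B-A)^{-1}\wt D$ with $\zeta\wh E(\zeta)[I_\X-\zeta\wh K(\zeta)]^{-1}D$, closing the loop. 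In the Hilbert case with $p\neq 2$, the phase space $\B$ is not itself Hilbert, so the paper needs an extra embedding step $\B^{2,\ga_1}\subset\B$ together with (\ref{e r emb}) to force the equalities.

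You instead stay in the Volterra picture throughout: step~(b) uses Theorem~\ref{t UEScr ga>0} plus the rank-one destabilisation trick directly on $I_\X-\zeta\wh K_\Delta(\zeta)$; step~(c) transplants the Neumann-series/Bohl--Perron argument (which is exactly the paper's proof of Theorem~\ref{t r>L fo}) to the Volterra system via Theorem~\ref{t BP ga>0}; and step~(d) reads off the $\ell^2$-norm of $\IO_K$ from its transfer function by Plancherel. Your approach is more self-contained and, notably, handles the Hilbert case uniformly in $p$ with no embedding trick---since $\IO_K$ is a causal convolution between $\U_2$- and $\U_1$-valued sequences, Plancherel only needs $\U_1,\U_2$ Hilbert (indeed your argument does not actually use that $\X$ is Hilbert). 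The paper's approach, on the other hand, buys reuse of \cite{WH94} as a black box and a clean conceptual bridge between Volterra and first-order theory that it exploits again elsewhere. The two ``obstacles'' you flag are real but routine: the fixed-point identity in~(c) holds because all maps involved are causal finite sums for each fixed $n$, and the rank-one $\Delta$ in~(b) is exactly the Hahn--Banach construction you sketch.
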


\begin{remark} Let $\X$, $\U_1$, and $\U_2$ be Hilbert spaces, but $p \neq 2$. 
Then, the phase space $\B$
is not a Hilbert space, but, according to the theorem, equalities (\ref{e r=}) still hold (cf.
\cite[Corollary 4.5]{WH94}). The proof of this part of the theorem requires an additional step.
\end{remark}

The proof is given in Section \ref{ss reduction}.

\emph{Let us turn to stability radii in the non-fading phase space
$\Bo^{\infty,0}$}. If $D = 0_{\U_2 \to \X}$ or $E = 0_{\B \to
\U_1}$, the answer is trivial and not interesting: all the stability
radii are equal to $\infty$. In the case $D \neq 0_{\U_2 \to \X}$,
it occurs that the stability radii may be positive only if the
operator $E$, which is initially assumed to be in $\L
(\Bo^{\infty,0},\U_1)$, satisfies an additional condition.

\begin{proposition} \label{p r=0}
Let (\ref{e Keq}) be UES in $\X$ w.r.t. $\Bo^{\infty,0}$ and $D \neq
0_{\U_2 \to \X}$. If $\r_\c (D , E; \Bo^{\infty,0}) > 0$,
then the discrete function $E (\cdot)$ decays exponentially.
\end{proposition}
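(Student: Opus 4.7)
The plan is to exploit the hypothesis $\r_\c>0$ to produce a rich family of rank-one perturbations $\De$ for which the perturbed system is still UES in $\X$ w.r.t.\ $\Bo^{\infty,0}$, then read off exponential decay of the associated perturbed kernel via Proposition \ref{e Kl1=>RK>1}, and finally combine these pointwise decay statements with the Baire category theorem to obtain uniform exponential decay of $E(\cdot)$.

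First I would fix $u_0\in\U_2$ with $Du_0\neq 0_\X$ (possible since $D\neq 0_{\U_2\to\X}$) and, by Hahn--Banach, pick $\ell\in\X^*$ with $\ell(Du_0)=1$. For each $\phi^*\in\U_1^*$ set $\De:=\phi^*(\cdot)\,u_0\in\L(\U_1,\U_2)$, a rank-one operator with $\|\De\|_{\U_1\to\U_2}=\|\phi^*\|_{\U_1^*}\,|u_0|_{\U_2}$. Whenever $\|\phi^*\|\,|u_0|<\r_\c$, the perturbed system (Sc) is UES in $\X$ w.r.t.\ $\Bo^{\infty,0}$, hence (Remark \ref{r EM=SwrtBo}(2)) UES in the resolvent matrix sense. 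Its convolution kernel
\[
\wt K(n)=K(n)+D\De E(n)=K(n)+\phi^*\!\left(E(n)\cdot\right)Du_0
\]
lies in $\L(\Bo^{\infty,0},\X)$, so Proposition \ref{e Kl1=>RK>1} applied to $\wt K$ (and to the unperturbed $K$) gives that both $\wt K(\cdot)$ and $K(\cdot)$ decay exponentially; therefore so does $D\De E(\cdot)$. Applying $\ell$ to the identity $D\De E(n)\psi=\phi^*(E(n)\psi)\,Du_0$ yields
\[
\|E(n)^*\phi^*\|_{\X^*}=\sup_{|\psi|_\X\le 1}|\phi^*(E(n)\psi)|\le\|\ell\|_{\X^*}\,\|D\De E(n)\|_{\X\to\X},
\]
so $\|E(n)^*\phi^*\|_{\X^*}$ decays exponentially; by scaling $\phi^*$, the same pointwise conclusion holds for every $\phi^*\in\U_1^*$, albeit with a rate and constant that may depend on $\phi^*$.

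The main and only nontrivial step is to upgrade this pointwise exponential decay to a uniform bound $\|E(n)\|_{\X\to\U_1}=\|E(n)^*\|_{\U_1^*\to\X^*}\le Ce^{-\nu n}$. For this I would use the Baire category theorem on the Banach space $\U_1^*$ by writing
\[
\U_1^*=\bigcup_{k,M\in\NN} B_{k,M},\qquad B_{k,M}:=\bigl\{\phi^*\in\U_1^*:\|E(n)^*\phi^*\|_{\X^*}\le M(1+1/k)^{-n}\text{ for all }n\in\ZZ^+\bigr\}.
\]
Each $B_{k,M}$ is a closed, symmetric, convex subset of $\U_1^*$ (closedness follows from continuity of $\phi^*\mapsto E(n)^*\phi^*$ for every fixed $n$), and the previous paragraph shows that the countable union covers $\U_1^*$. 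By Baire, some $B_{k,M}$ contains a ball $B(\phi_0^*,\delta)$; the standard shift-and-rescale argument then gives $\|E(n)^*\phi^*\|_{\X^*}\le(2M/\delta)(1+1/k)^{-n}$ for all $\|\phi^*\|\le 1$, which is the desired exponential decay of $E(\cdot)$. The hard part is precisely this last Baire step: extracting a common decay rate from the continuum of $\phi^*$-dependent rates. An alternative route through weak-to-norm analyticity of $\wh E$ beyond $\DD(1)$ is conceivable, but it would require the same uniform-radius input and thus ultimately reduce to the same category argument.
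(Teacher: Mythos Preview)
Your proof is correct. The initial reduction---rank-one perturbations $\De=\phi^*(\cdot)\,u_0$, UES of the perturbed system, and an appeal to Proposition~\ref{e Kl1=>RK>1} (equivalently Theorem~\ref{t UEScr ga=0}) to get exponential decay of the perturbed kernel, hence of $D\De E(\cdot)$---mirrors the paper exactly. The genuine difference is the last step. The paper argues by contradiction: assuming $E(\cdot)$ does not decay, it fixes a $\ga>0$, uses the unboundedness of $e^{\ga n_k}\|E(n_k)\|$ and the uniform boundedness principle to produce a single functional $u^*\in\U_1^*$, builds the corresponding $\De_0$, and shows that $D\De E$ cannot be bounded on $\B^{\infty,\ga}$. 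You instead establish pointwise exponential decay of $\|E(n)^*\phi^*\|$ for \emph{every} $\phi^*\in\U_1^*$ and then uniformize the rate via the Baire category theorem. Your route is a little longer but cleanly decouples the choice of test functional from the exponent: in the paper's argument the $\ga$ comes from Theorem~\ref{t UEScr ga=0} applied to the \emph{perturbed} system and so depends on $\De_0$, while $\De_0$ is itself built from $\ga$---an interdependence that needs care to unwind. The Baire step avoids this subtlety entirely.
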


The proof is not too long and illustrates well the use of Theorem
\ref{t UEScr ga=0} (i) $\Leftrightarrow$ (ii).

\begin{proof}
Assume $\r_\c (D , E; \Bo^{\infty,0}) > 0$. Then for any $\De_0
\in \L (\U_1,\U_2) $ there exists small $\ep = \ep (\De_0) > 0 $ such that the time-invariant system (Sc) with $\De =
\ep \De_0$ is UES in $\X$ w.r.t. $\Bo^{\infty,0}$. By Theorem \ref{t
UEScr ga=0} (i) $\Leftrightarrow$ (ii), there exists $\ga >0$ such
that both systems (\ref{e Keq}) and (Sc) are defined on
$\B^{\infty,\ga}$. Hence,
\begin{equation} \label{e ext DDe0E}
\text{the operator } D \De E \text{ can be extended by continuity to } \B^{\infty,\ga} .
\end{equation}

Assume now that $E (\cdot)$ does not decay exponentially. 
Then, there exist an increasing sequence $n_k$ such that
$\lim_{k\to \infty} \| e^{\ga n_k} E (n_k) \|_{\X \to \U_1} = \infty$. Choose $\psi (k) \in \X$
with the properties $| \psi (k) |_\X = 1$ and $|E (n_k) \psi (k)|_{\U_1} > \frac 12 \ \| E (n_k) \|_{\X \to \U_1}$
for all $k \in \NN$.
Consider $\vphi (k) \in \Bo^{\infty,0}$ defined by
\[
\vphi^{[m]} (k) = \left\{ \begin{array}{ll} e^{\ga n_k} \psi (k) & \text{ if } m = -n_k \\
0_\X, & \text{otherwise}
\end{array} \right. \ .
\]
Then
\begin{equation} \label{e phi=1 Ephi to inf}
 | \vphi (k) |_{\B^{\infty,\ga}}  = 1 \ \ \text{ for all } k, \ \ \ \
 \text{ but } \ \
\lim_{k\to \infty } | E \vphi (k) |_{\U_1} = \infty .
\end{equation}
Indeed, $| E \vphi (k) |_{\U_1} = \left| \sum_{j=0}^{\infty} E (j) \vphi^{[-j]} (k) \right|_{\U_1} =
e^{\ga n_k} | E(n_k) \psi (k)|_{\U_1}$ and so
\[
| E \vphi (k) |_{\U_1} \geq \frac 12 \ e^{\ga n_k} \| E (n_k) \|_{\X \to \U_1} \to \infty.
\]

By (\ref{e phi=1 Ephi to inf}) and the uniform boundedness principle, there exists
$u^* \in \U_1^*$ such that $\left| u^* \left( E \vphi (k) \right) \right| \to \infty$.
Let $u_2 \in \U_2$ be such that $\psi = D u_2 \neq 0_\X$.
Consider an operator $\De_0 \in \L (\U_1,\U_2) $
defined by $\De_0 u_1 = u^* (u_1) \ u_2$. Then
\[
| D \De E \vphi (k) |_{\X} = \ep (\De_0) \, | D \De_0 E \vphi (k) |_{\X} =
\ep (\De_0) \ | \psi |_\X  \ | u^* (E \vphi (k)) |  \to \infty .
\]
This contradicts (\ref{e ext DDe0E}).
\end{proof}

\begin{theorem} \label{t r Bo}
Let $1\leq q \leq \infty$. Let (\ref{e Keq}) be UES in $\X$ w.r.t. $\Bo^{\infty,0}$, and let $E (\cdot)$ decay exponentially.
Then
$\IO_K \in \L \left( \ll^q (\U_2), \ll^q (\U_1)\right) $ and
\begin{multline}
 \Bigl( \max_{|\zeta|=1} \| \wh E (\zeta)  [I_\X - \zeta  \wh K (\zeta)]^{-1} D \|_{\U_2 \to \U_1} \Bigr)\!^{-1} =
 \r_\c (D , E; \Bo^{\infty,0}) \ge \r_{\t} (D , E; \Bo^{\infty,0}) \ge  \| \IO_K
\|_{ \ll^q (\U_2) \to \ll^q (\U_1)}^{-1}  .\label{e rUES> Bo}
\end{multline}

If, additionally, $q=2$ and $\X$, $\U_1$, $\U_2$ are Hilbert spaces, then
(\ref{e rUES> Bo}) holds with equalities.
\end{theorem}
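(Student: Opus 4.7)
The plan is to reduce Theorem \ref{t r Bo} to the already-established Theorem \ref{t r ga>0} by inserting an auxiliary exponentially fading phase space $\B^{p,\ga}$ with small $\ga>0$, applying Theorem \ref{t r ga>0} there, and then transferring the conclusions back to $\Bo^{\infty,0}$ via the continuous embedding (\ref{e emB}) and the convolution-specific stability equivalences of Theorems \ref{t UEScr ga>0} and \ref{t UEScr ga=0}.

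For the setup, I would first invoke Theorem \ref{t UEScr ga=0} applied to (\ref{e Keq}) to obtain $\ga_0>0$ such that (\ref{e Keq}) is UES w.r.t.\ every $\B^{p,\ga}$ with $(p,\ga)\in[1,\infty]\times(0,\ga_0]$, and then use the exponential decay of $E(\cdot)$ together with equivalence (\ref{e RwhK>1 equiv}) (applied with $E$ in place of $K$) to get $\ga_1>0$ such that $E\in\L(\B^{p,\ga},\U_1)$ for every $(p,\ga)\in[1,\infty]\times(0,\ga_1]$. Fixing any $\ga\in(0,\min\{\ga_0,\ga_1\})$ and any $p\in[1,\infty]$, and writing $M:=\max_{|\zeta|=1}\|\wh E(\zeta)[I_\X-\zeta\wh K(\zeta)]^{-1}D\|_{\U_2\to\U_1}$, all hypotheses of Theorem \ref{t r ga>0} are satisfied on $\B^{p,\ga}$, which yields $\IO_K\in\L(\ll^q(\U_2),\ll^q(\U_1))$ and
\[
M^{-1}=\r_\c(D,E;\B^{p,\ga})\ge\r_\t(D,E;\B^{p,\ga})\ge\|\IO_K\|_{\ll^q(\U_2)\to\ll^q(\U_1)}^{-1}.
\]
Since $\Bo^{\infty,0}\subset\B^{p,\ga}$ continuously by (\ref{e emB}), the monotonicity (\ref{e r emb}) transfers the two lower bounds to the non-fading phase space, giving $\r_\c(D,E;\Bo^{\infty,0})\ge M^{-1}$ and $\r_\t(D,E;\Bo^{\infty,0})\ge\|\IO_K\|_{\ll^q(\U_2)\to\ll^q(\U_1)}^{-1}$.

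To match the lower bound for $\r_\c$ and produce the first equality in (\ref{e rUES> Bo}), I would take any $\ep>0$ and, using that $M^{-1}=\r_\c(D,E;\B^{p,\ga})$ is an infimum, pick $\De\in\L(\U_1,\U_2)$ with $\|\De\|\le M^{-1}+\ep$ for which (Sc) fails to be UES w.r.t.\ $\B^{p,\ga}$. The key observation is that (Sc) is itself of convolution type and is defined on $\Bo^{\infty,0}$ (because $D\De E\in\L(\Bo^{\infty,0},\X)$ by the hypothesis on $E$), so Theorem \ref{t UEScr ga>0}(i)$\Leftrightarrow$(iii) converts failure of UE stability on $\B^{p,\ga}$ into failure of UE stability in the resolvent matrix sense, and a further application of Theorem \ref{t UEScr ga=0}(i)$\Leftrightarrow$(iii) converts the latter into failure on $\Bo^{\infty,0}$. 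Hence $\r_\c(D,E;\Bo^{\infty,0})\le M^{-1}+\ep$, and letting $\ep\to 0$ combined with (\ref{e rc>rt>rnt}) produces the chain (\ref{e rUES> Bo}). In the Hilbert case with $q=2$, Theorem \ref{t r ga>0} additionally supplies $M^{-1}=\|\IO_K\|_{\ll^2(\U_2)\to\ll^2(\U_1)}^{-1}$, which forces all inequalities in the chain to become equalities.

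I expect the main obstacle to be precisely this matching upper bound step for $\r_\c$: it hinges on the convolution (time-invariant) structure of (Sc) so that Theorem \ref{t UEScr ga=0} can be used to transport instability from $\B^{p,\ga}$ back to $\Bo^{\infty,0}$. No analogous mechanism is available for time-varying perturbations (St), which is exactly why the statement settles for a one-sided inequality on $\r_\t$ outside the Hilbert $q=2$ regime.
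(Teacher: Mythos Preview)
Your proposal is correct and follows essentially the same route as the paper: reduce to a fading phase space $\B^{p,\ga}$ via Theorems \ref{t UEScr ga=0} and \ref{t UEScr ga>0}, apply Theorem \ref{t r ga>0} there, and transfer back with (\ref{e r emb}). The only cosmetic difference is in the upper bound for $\r_\c$: the paper argues the contrapositive (if (Sc) is UES w.r.t.\ $\Bo^{\infty,0}$ then, via Theorem \ref{t UEScr ga=0}, it is UES w.r.t.\ $\B^{2,\ga}$, forcing the inclusion of destabilizing sets and hence $\r_\c(\Bo^{\infty,0})\le\r_\c(\B^{2,\ga})=r_0$), whereas you pick an explicit near-optimal destabilizing $\De$ on $\B^{p,\ga}$ and push the instability to $\Bo^{\infty,0}$ through the resolvent-matrix equivalence --- these are the same argument read in opposite directions.
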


The proof is given in Section \ref{ss p t r Bo}.

The assumption that $E (\cdot)$ decays exponentially is always satisfied in the important case
when $E$ defines \emph{perturbations with bounded delay},
i.e., when $E (n) = \Oz_\X$ for $n$ large enough.

\section{Unstructured perturbations and the norm of the input-state operator} \label{s unstr}

For a fixed phase space $\B$, consider the perturbed system
\begin{equation} \label{e unstr}
x(n+1)=\sum_{j=0}^{+\infty} \ K(j) \ x(n-j) + \Nper \! (n) x_n  ,
\end{equation}
where the restrictions similar to (Pc), (Pt) are imposed on the disturbance mappings $\Nper (n) \in \L(\B,\X)$,
$n \in \ZZ^+$.
That is, $\Nper (n)$ is supposed to be either
time invariant $ \Nper (n ) =  \De $ with $\De  \in \L (\B,\X)$ or time-varying
$ \Nper ( n ) =  \De (n) $ with $\De (\cdot) \in \ll^\infty \left( \L (\B,\X) \right)$.

The definition of the corresponding stability radii $\r_i (K; \B)$ can
be given in the way similar to that of Definition \ref{d rUES}, or,
alternatively, one can notice that the perturbed systems under consideration are particular cases of
(Sc),(St) with the very simple choice of the perturbation structure
\[
\U_1 = \B, \ \ \ E=I_\B, \ \ \  \U_2 = \X, \ \ \ D=I_\X.
\]
So, \emph{the unstructured stability radii} can be defined by
\[
\r_i (K; \B) := \r_i (K; I_\X , I_\B; \B)
, \ \ \ i=\c,\t.
\]

The input-output operator $\IO_K $ (see Definition \ref{d IO K})
turns into the unstructured input to prehistory of state operator, i.e.,
$\IO_K : v (\cdot) \to x_\bullet $,
where $x_n = \{ x (n+m) \}_{m=-\infty}^0$ and $x (\cdot)$ is the solution to the
system
$x(n+1)=\sum_{j=0}^{n} \ K(n-j) \ x(j) + v (n)$, $x_0 =0_\B$.

The discrete function $E (\cdot)$ for $E=I_{\B^{p,\ga}}$ is $E (n) = P_{-n}^\T $, see (\ref{e def G dot}).

\emph{Let us start with the unstructured radii in the case of the non-fading phase space $\B = \Bo^{\infty,0}$.}
Since $\| E (n) \|_{\X \to \Bo^{\infty,0}} = \| P_{-n}^\T \|_{\X \to \Bo^{\infty,0}} = 1$, we see that the discrete function
$E (\cdot)$ does not decay exponentially. By Proposition \ref{p r=0}, $ \r_\c (K; \Bo^{\infty,\ga}) =0 $.
Due to (\ref{e rc>rt>rnt}), we get the following.

\begin{corollary} \label{c unstr Bo}
$ \r_\c (K; \Bo^{\infty,0}) = \r_\t (K; \Bo^{\infty,0}) = 0$.
\end{corollary}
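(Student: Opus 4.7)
The plan is to apply Proposition \ref{p r=0} in contrapositive form to the unstructured data $\U_1 = \Bo^{\infty,0}$, $\U_2 = \X$, $D = I_\X$, $E = I_{\Bo^{\infty,0}}$. Since $D = I_\X \neq 0_{\U_2 \to \X}$, that proposition reduces the question of whether $\r_\c > 0$ to whether the associated discrete function $E(\cdot)$ decays exponentially. So I would aim to compute $E(\cdot)$ explicitly for the unstructured identity operator and observe that it manifestly fails to decay.

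A preliminary observation is that one has to separate the degenerate case where (\ref{e Keq}) is not itself UES in $\X$ w.r.t. $\Bo^{\infty,0}$: in that situation the zero disturbance already destabilises the system, so the infimum in Definition \ref{d rUES} is realised by $\|\Delta\|=0$ and both radii vanish trivially. In the remaining case the hypothesis of Proposition \ref{p r=0} is met. Using formula (\ref{e def G dot}) with $E = I_{\Bo^{\infty,0}}$, I compute $E(n) = I_{\Bo^{\infty,0}} P_{-n}^\T = P_{-n}^\T$ as an element of $\L(\X,\Bo^{\infty,0})$. For any $\psi \in \X$, the tuple $P_{-n}^\T \psi$ has $\psi$ in position $-n$ and zeros elsewhere, so its $\Bo^{\infty,0}$-norm equals $|\psi|_\X$; consequently $\|E(n)\|_{\X \to \Bo^{\infty,0}} = 1$ for every $n \in \ZZ^+$, which certainly precludes exponential decay.

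Applying Proposition \ref{p r=0} contrapositively therefore yields $\r_\c(K; \Bo^{\infty,0}) = 0$, and combining this with the general inequality (\ref{e rc>rt>rnt}) together with the non-negativity of the stability radii gives $\r_\t(K; \Bo^{\infty,0}) = 0$ as well. I do not foresee any genuine obstacle here: once Proposition \ref{p r=0} is available, the entire argument collapses to identifying $E(\cdot) = P_{-\bullet}^\T$ and noting that the unit norms of the coordinate embeddings obstruct any form of decay.
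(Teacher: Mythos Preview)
Your argument is correct and follows essentially the same route as the paper: identify $E(n)=P_{-n}^\T$, note $\|P_{-n}^\T\|_{\X\to\Bo^{\infty,0}}=1$ so $E(\cdot)$ does not decay exponentially, apply Proposition~\ref{p r=0} contrapositively to get $\r_\c=0$, and then use (\ref{e rc>rt>rnt}) for $\r_\t$. Your explicit treatment of the degenerate case where (\ref{e Keq}) itself fails to be UES is a small but legitimate refinement, since Proposition~\ref{p r=0} carries that hypothesis; the paper leaves this case implicit.
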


\emph{Consider fading-phase spaces}, i.e., the case when $\ga>0$ and $\B = \B^{p,\ga}$ or $\B
= \Bo^{\infty,\ga}$ .

\begin{corollary} \label{c unstr}
Let $\ga >0$ and $1 \le p \le \infty$. Let $\B=\B^{p,\ga}$ or $\B=\Bo^{\infty,\ga}$ (in the latter case it is assumed that
$p=\infty$).
Let (\ref{e Keq}) be UES in $\X$ w.r.t. $\B$.
Then $\IS_K \in \L \left( \ll^p (\X) \right) $ and
\begin{multline}
(1-e^{-p\ga})^{1/p}  \left( \max_{|\zeta|=1} \| \, [I_\X - \zeta  \wh K (\zeta)]^{-1} \|_{\X \to \X} \right)^{-1} =
 \r_\c (K; \B) \ge \r_{\t} (K; \B) \ge \\ \ge
\left( 1-e^{-p\ga} \right)^{1/p}
\| \IS_K \|_{ \ll^p (\X) \to \ll^p ( \X) }^{-1}  ,
\label{e r> unstr}
\end{multline}
where  $e^{-p\ga} $ and $1/p$ have to be understood as zero when
$p=\infty$.

If, additionally,  $p=2$ and $\X$ is a Hilbert space, then
(\ref{e r> unstr}) holds with equalities.
\end{corollary}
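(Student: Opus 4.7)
The plan is to derive Corollary \ref{c unstr} as a direct specialization of Theorem \ref{t r ga>0} to the unstructured structure $\U_1 = \B$, $\U_2 = \X$, $E = I_\B$, $D = I_\X$ with $q = p$. The only real work is to translate the two ``generic'' quantities appearing in (\ref{e rGAS>}) — namely $\max_{|\zeta|=1}\|\wh E(\zeta)[I_\X-\zeta\wh K(\zeta)]^{-1}\|_{\X\to\B}$ and $\|\IO_K\|_{\ll^p(\X)\to\ll^p(\B)}$ — into the unstructured quantities $\max_{|\zeta|=1}\|[I_\X-\zeta\wh K(\zeta)]^{-1}\|_{\X\to\X}$ and $\|\IS_K\|_{\ll^p(\X)\to\ll^p(\X)}$, respectively. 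Both translations introduce the same factor $(1-e^{-p\ga})^{-1/p}$, which produces the displayed constant $(1-e^{-p\ga})^{1/p}$ after inversion.

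First I would note that UE stability of (\ref{e Keq}) w.r.t. $\B$ implies $\IS_K \in \L(\ll^p(\X))$ by Theorem \ref{t UEScr ga>0} applied with $q_1 = q_2 = p$ (for $p=\infty$, one uses $(q_1,q_2)=(\infty,\infty)$, which is admissible). For the first translation, since $E = I_\B$ gives $E(n) = P_{-n}^\T$, the series $\wh E(\zeta)\psi = \sum_{j=0}^\infty \zeta^j P_{-j}^\T \psi$ produces the tuple whose $m$-th coordinate ($m\le 0$) equals $\zeta^{-m}\psi$. On $|\zeta|=1$ each coordinate has $\X$-norm $|\psi|_\X$, so
\begin{equation*}
|\wh E(\zeta)\psi|_{\B^{p,\ga}} = |\psi|_\X \Bigl(\textstyle\sum_{m=-\infty}^{0} e^{p\ga m}\Bigr)^{1/p} = (1-e^{-p\ga})^{-1/p}|\psi|_\X,
\end{equation*}
and composing with $[I_\X-\zeta\wh K(\zeta)]^{-1}$ yields $\|\wh E(\zeta)[I_\X-\zeta\wh K(\zeta)]^{-1}\|_{\X\to\B} = (1-e^{-p\ga})^{-1/p}\|[I_\X-\zeta\wh K(\zeta)]^{-1}\|_{\X\to\X}$. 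The $p=\infty$ convention gives $(1-e^{-\infty})^{-1/\infty} = 1$, which is the correct value $\sup_m e^{\ga m}|\psi|_\X = |\psi|_\X$.

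For the second translation, with $D=I_\X$ the input-output operator acts by $(\IO_K v)(n) = (\IS_K v)_n$. Writing $x = \IS_K v$ and using $x(k)=0$ for $k\le 0$, for $1\le p<\infty$ the substitution $k = n+m$ (so $m = k-n \le 0$, $n \ge k \ge 1$) gives
\begin{equation*}
|\IO_K v|_{\ll^p(\B)}^p = \sum_{n=0}^{\infty}\sum_{m=-\infty}^{0} e^{p\ga m}|x(n+m)|_\X^p = \sum_{k=1}^{\infty}|x(k)|_\X^p\sum_{j=0}^{\infty} e^{-p\ga j} = \frac{1}{1-e^{-p\ga}}|\IS_K v|_{\ll^p(\X)}^p,
\end{equation*}
so $\|\IO_K\|_{\ll^p(\X)\to\ll^p(\B)} = (1-e^{-p\ga})^{-1/p}\|\IS_K\|_{\ll^p\to\ll^p}$; the $p=\infty$ case reduces to $\sup_{k\ge 1}|x(k)|_\X$ and is consistent with the conventions. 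Plugging both translations into (\ref{e rGAS>}) of Theorem \ref{t r ga>0} yields (\ref{e r> unstr}).

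For the Hilbert equality assertion, I would observe that when $\X$ is Hilbert and $p=2$, the phase space $\B = \B^{2,\ga}$ is Hilbert, and both $\U_1 = \B^{2,\ga}$ and $\U_2 = \X$ are Hilbert, so the equality part of Theorem \ref{t r ga>0} applies verbatim and forces all three inequalities in (\ref{e r> unstr}) to be equalities. The main potential pitfall is bookkeeping of the $p=\infty$ conventions (and the fact that $\B = \Bo^{\infty,\ga}$ is admitted only in the $p=\infty$ case), but the same computations go through with $\sup$ replacing the weighted $\ell^p$-sum; no additional argument is needed.
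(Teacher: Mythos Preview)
Your proposal is correct and follows essentially the same approach as the paper: specialize Theorem \ref{t r ga>0} with $\U_1=\B$, $\U_2=\X$, $E=I_\B$, $D=I_\X$, $q=p$, then compute $\wh E(\zeta)\psi$ as the geometric-weight tuple to extract the factor $(1-e^{-p\ga})^{-1/p}$ on the transfer-function side, and relate $\IO_K$ to $\IS_K$ via the same double-sum/substitution argument to extract the identical factor on the input-output side. The only cosmetic difference is that you justify $\IS_K\in\L(\ll^p(\X))$ directly from Theorem \ref{t UEScr ga>0}, whereas the paper leaves this implicit in the identity $\|\IO_K\|=(1-e^{-p\ga})^{-1/p}\|\IS_K\|$ together with the boundedness of $\IO_K$ from Theorem \ref{t r ga>0}.
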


\begin{proof}
Since $E (n) = P_{-n}^\T $, we see that
$\left( \wh E (\zeta)
\psi \right)^{[m]} = \zeta^{-m} P_{-n}^\T \psi $ for $m \in
\ZZ^-$ and $\psi \in \X$.
Taking $\zeta \in \{ z \in \CC : |z| =1 \}$ and $v \in \X$, we have for $p <\infty$
\begin{eqnarray*}
\left| \wh E (\zeta)  [I_\X - \zeta  \wh K (\zeta)]^{-1} D v
\right|_{\B^{p,\ga}}^p = \!\! \sum_{m=-\infty}^{0} e^{pm\ga} \left| [I_\X -
\zeta  \wh K (\zeta)]^{-1} v \right|_\X^p  = (1-e^{-p\ga})^{-1}  \left| [I_\X -
\zeta \wh K (\zeta)]^{-1} v \right|_\X^p ,
\end{eqnarray*}
and $\left| \wh E (\zeta)  [I_\X - \zeta  \wh K (\zeta)]^{-1} D v
\right|_{\B_{(0)}^{\infty,\ga}} = \left| [I_\X - \zeta  \wh K (\zeta)]^{-1} v
\right|_\X $ when $p=\infty$. Theorem \ref{t r ga>0} gives
\begin{multline*}
(1-e^{-p\ga})^{1/p}  \left( \max_{|\zeta|=1} \| (I_\X - \zeta  \wh K (\zeta))^{-1} \|_{\X \to \X} \right)^{-1} =
 \r_\c (K; \B) \ge \r_{\t} (K; \B) \ge
 \| \IO_K \|_{ \ll^q (\X) \to \ll^q (\B)}^{-1} ,
\label{e r> IO unstr}
\end{multline*}
where $q$ can be chosen arbitrary in the range $1 \le q \le \infty$.

With this extremely simple choice of the structure, the operator $\IO_K$ can be
expressed through the unstructured input-state operator $\IS_K$.
Indeed,
\begin{equation} \label{e IO=(IS)}
(\IO_K v) (n) = x_n = \{ \dots, x(n-1) , x(n) \} = \{  \dots , (\IS_K v) (n-1) , (\IS_K v) (n) \} .
\end{equation}
Put $q=p$. Then, in the case $p<\infty$,
\begin{equation} \label{e IO=IS}
\| \IO_K \|_{ \ll^p (\X) \to \ll^p ( \B^{p,\ga} )} = \left( 1-e^{-p\ga} \right)^{-1/p}
\| \IS_K \|_{ \ll^p (\X) \to \ll^p ( \X) } .
\end{equation}
In fact, (\ref{e IO=(IS)}) implies
\begin{eqnarray*}
| \IO_K v |_{\ll^p ( \B^{p,\ga} )}^p & = & \sum_{n=1}^{+\infty} |
x_n |^p_{\B^{p,\ga}} = \sum_{n=1}^{+\infty} \sum_{j=0}^{n-1}
| e^{-j\ga} x (n-j) |_\X^p = 
\left( 1-e^{-p\ga} \right)^{-1} \sum_{k=1}^{+\infty} | (\IS_K v) (k) |_\X^p .
\end{eqnarray*}
When $p=\infty$, we obviously have
$\| \IO_K \|_{ \ll^\infty (\X) \to \ll^\infty ( \B^{\infty,\ga} )} =
\| \IS_K \|_{ \ll^\infty (\X) \to \ll^\infty ( \X) } $
\end{proof}

\begin{corollary} \label{c |I-whK|=|IS|}
Assume that $1 \le p \le \infty$, $\IS_K \in \L (\ll^p (\X))$, and $R [\wh K] > 1$.
Then $I_\X - \zeta \wh K (\zeta) $ is boundedly invertible for all $\zeta \in \overline{\DD(1)}$ and
\begin{equation}
 \max_{|\zeta|=1} \| \, [I_\X - \zeta  \wh K (\zeta)]^{-1} \|_{\X \to \X} =
 \max_{|\zeta|\leq 1} \| \, [I_\X - \zeta  \wh K (\zeta)]^{-1} \|_{\X \to \X}
 \le
\| \IS_K \|_{ \ll^p (\X) \to \ll^p ( \X) } .
\label{e |I-whK|<|IS|}
\end{equation}

If, additionally,  $\X$ is a Hilbert space and $p=2$, then
the equality hold in (\ref{e |I-whK|<|IS|}).
\end{corollary}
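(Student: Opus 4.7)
The plan is to derive this corollary as a direct consequence of Corollary \ref{c unstr} applied on an exponentially fading phase space with arbitrarily small weight, after verifying that the hypotheses of that corollary are met.

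\emph{Setting up UE stability and bounded invertibility.} Since $R[\wh K]>1$, equivalence (\ref{e RwhK>1 equiv}) gives a $\ga_0>0$ such that $K\in\L(\B^{p,\ga},\X)$ for every $\ga\in(0,\ga_0]$, so system (\ref{e Keq}) is defined on $\B^{p,\ga}$. Because $(p,p)$ satisfies (\ref{e q1q2neq}) for every $p\in[1,\infty]$, the assumption $\IS_K\in\L(\ll^p(\X))$ together with Theorem \ref{t UEScr ga>0} (iv)$\Rightarrow$(i),(ii) shows both that (\ref{e Keq}) is UES in $\X$ w.r.t.\ $\B^{p,\ga}$ and that $I_\X-\zeta\wh K(\zeta)$ is boundedly invertible for every $\zeta\in\overline{\DD(1)}$; this yields the first assertion of the corollary.

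\emph{The inequality.} With $\B=\B^{p,\ga}$ the hypotheses of Corollary \ref{c unstr} are now satisfied, so writing $M:=\max_{|\zeta|=1}\|[I_\X-\zeta\wh K(\zeta)]^{-1}\|_{\X\to\X}$ and observing that neither $M$ nor $\|\IS_K\|_{\ll^p\to\ll^p}$ depends on the auxiliary parameter $\ga$, the chain
\[
(1-e^{-p\ga})^{1/p}\,M^{-1}=\r_\c(K;\B^{p,\ga})\ge\r_\t(K;\B^{p,\ga})\ge(1-e^{-p\ga})^{1/p}\,\|\IS_K\|_{\ll^p\to\ll^p}^{-1}
\]
can be divided through by the common positive factor $(1-e^{-p\ga})^{1/p}$ (understood as $1$ when $p=\infty$). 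This produces the desired bound $M\le\|\IS_K\|_{\ll^p\to\ll^p}$. In the Hilbert case with $p=2$, the ``additionally'' clause of Corollary \ref{c unstr} turns the same chain into equalities, yielding equality in (\ref{e |I-whK|<|IS|}).

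\emph{Equating the two maxima.} Since $\wh K$ is analytic on $\DD(R[\wh K])$ with $R[\wh K]>1$ and since the set of boundedly invertible operators in $\L(\X)$ is open (used already in the proof of Theorem \ref{t UEScr ga>0} via \cite[Sec.\ VII.6]{DSh58}), the map $F(\zeta):=[I_\X-\zeta\wh K(\zeta)]^{-1}$ is $\L(\X)$-valued analytic on some open neighborhood of $\overline{\DD(1)}$. For each fixed $v\in\X$ and $\psi^*\in\X^*$ the scalar function $\zeta\mapsto\psi^*(F(\zeta)v)$ is holomorphic there, so the classical maximum modulus principle bounds it on $\overline{\DD(1)}$ by its maximum over $\TT$; taking the supremum over unit vectors $v\in\X$ and $\psi^*\in\X^*$ gives $\max_{|\zeta|\le1}\|F(\zeta)\|=\max_{|\zeta|=1}\|F(\zeta)\|$, which is the remaining equality in (\ref{e |I-whK|<|IS|}).

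\emph{Main obstacle.} There is no genuinely hard step; all nontrivial analytic content is already packaged into Theorem \ref{t UEScr ga>0} and Corollary \ref{c unstr}. The only point requiring care is arranging for the $\ga$-dependent prefactor $(1-e^{-p\ga})^{1/p}$ from the fading-space estimate to cancel, which it does because both $M$ and $\|\IS_K\|_{\ll^p\to\ll^p}$ are $\ga$-independent.
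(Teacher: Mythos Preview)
Your proof is correct and follows essentially the same route as the paper's: use $R[\wh K]>1$ to place the system on some $\B^{p,\ga}$ with $\ga>0$, invoke Theorem \ref{t UEScr ga>0} to get UE stability and bounded invertibility, then read off the inequality (and the Hilbert-space equality) from Corollary \ref{c unstr} after cancelling the common factor $(1-e^{-p\ga})^{1/p}$, with the maximum modulus principle supplying the equality of the two maxima. Your version is simply more explicit about the cancellation and about how the maximum principle is applied to the operator-valued function via scalar pairings.
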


\begin{proof}
It follows from $R [\wh K] > 1$, that (\ref{e Keq}) is defined on $B^{p,\ga}$ for certain $\ga>0$.
By Theorem \ref{t UEScr ga>0}, the assumption $\IS_K \in \L (\ll^p )$ implies the UE stability of (\ref{e Keq}) in $\X$ w.r.t.
$B^{p,\ga}$, and also implies, the bounded invertibility of $I_\X - \zeta \wh K (\zeta) $
for all $\zeta  \in \overline{\DD (1)} $.
Now (\ref{e |I-whK|<|IS|}) follows from Corollary \ref{c unstr} and the maximum modulus principle.
\end{proof}

\section{Proofs of Theorems \ref{t r ga>0} and \ref{t r Bo}: two reductions}
\label{s proof radii}

\subsection{Stability radii for first order systems}
\label{ss Stab first ord}

First, we consider stability radii for a \emph{linear first
order time-varying} system
\begin{equation}
w (n+1) = \A (n) w (n) ,  \quad n \in \ZZ^+,  \label{e A s}
\end{equation}
where $ w \in \Ssp_+ (\W)$, $\A (n) \in \L (\W)$ for all $n$, and $\W$ is a certain
Banach space.

Let $\U_1$, $\U_2 $ be auxiliary Banach spaces. Let  $\wt E \in \L
(\W,\U_1)$ and $\wt D \in \L (\U_2 , \W)$. Following \cite{WH94},
consider two classes of structured perturbations for (\ref{e
A s})
\begin{description}
\item[(FOSc)] $ w(n+1) = \A (n) w (n) + \wt D \ \De \ \wt E w (n) $,
\item[(FOSt)] $ w(n+1) = \A (n) w (n) + \wt D \ \De (n) \ \wt E w(n) $,
\end{description}
where the disturbance mappings  $\De$ and $\De (n)$   have
the properties (Pc) and (Pt) of Section \ref{ss r def}, respectively.

The UE stability for first order
systems is defined as usual (i.e., the norm $| \cdot |_\W$
replaces both the norms $| \cdot |_\X$ and $| \cdot |_\B$ in Definition \ref{d UESn}, see e.g. \cite{P87,SS04,BK10}).

The stability radii of (\ref{e A s})  w.r.t. perturbations of the
structure $(\wt D, \wt E)$ and the disturbances classes (Pc) and (Pt) are
defined by
\begin{eqnarray*} \label{e def r fo c}
\r_\c (\A; \wt D , \wt E)  & := & \inf \{ \| \De \|_{\U_1 \to \U_2} \  \ : \ \De \in
\L(\U_1,\U_2), \text{ and (FOSc)  is not UES } \}, \\
\r_\t (\A; \wt D , \wt E) & := & \inf \{ | \De (\cdot) |_{\ll^{\infty}} \  \ : \ \De (\cdot) \in
\ll^\infty \left(\L(\U_1,\U_2) \right), \text{ and (FOSt)  is not UES } \}. \label{e def r fo t}
\end{eqnarray*}

The unstructured input-state
operator $\IS_\A : \Ssp_+ (\W) \to \Ssp_+ (\W)$ associated with (\ref{e A s}) is defined by $(\IS_\A f) (\cdot)= w (\cdot)$, where $w=w(\cdot)$
is the solution to the nonhomogeneous system
\begin{equation*} \label{e Asys f}
w(n+1)=\A (n) w (n) + f(n) , \quad n\geq 0, \ \ \
w (0) =0_\W .
\end{equation*}
The input-output operator $\IO_{\A}: \Ssp_+ (\U_2) \to \Ssp_+ (\U_1)$ corresponding to (\ref{e A s})
and the perturbation structure $(\wt D, \wt E)$ is defined analogously to Definition \ref{d IO K}, i.e.,
$(\IO_\A v) (n) := \wt E w(n)$, where $w (\cdot)$ is the solution to $w(n+1)=\A (n) w (n) + \wt D v (n) ,$ $\ w(0) =0_\W $.

For any $1 \le q \le \infty$, the following criterion of Bohl-Perron type holds
\begin{equation} \label{e BPcr fo}
\text{system (\ref{e A s}) is UES } \ \
\Longleftrightarrow  \ \ \IS_\A \in \L \left( \ll^{q} (\W) \right) ,
\end{equation}
see \cite{P87,AVM96,SS04} (and also discussion in \cite[Sect.~2.2]{BK10}) for a stronger version of this result.
In particular, UE stability of (\ref{e A s}) implies $\IO_\A \in \L \left( \ll^q (\U_2) , \ll^q (\U_1) \right)$.

\begin{theorem}\label{t r>L fo}
Suppose (\ref{e A s}) is UES and $1 \leq q \leq \infty$.
Then
\[
\r_\c (\A; \wt D , \wt E) \ge
\r_\t (\A; \wt D , \wt E)  \ge  \| \IO_\A \|_{\ll^q (\U_2) \to \ll^q (\U_1)}^{-1} .
\]
\end{theorem}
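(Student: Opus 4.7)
The first inequality $\r_\c \geq \r_\t$ follows from the same embedding argument that yields (\ref{e rc>rt>rnt}): any time-invariant $\Delta \in \L(\U_1,\U_2)$ can be identified with the constant sequence $\{\Delta,\Delta,\dots\} \in \ll^\infty(\L(\U_1,\U_2))$ in a norm-preserving way, so the infimum defining $\r_\t$ is taken over a larger set.

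For the nontrivial inequality $\r_\t(\A; \wt D, \wt E) \geq \|\IO_\A\|_{\ll^q(\U_2) \to \ll^q(\U_1)}^{-1}$, the plan is to fix an arbitrary $\Delta(\cdot) \in \ll^\infty(\L(\U_1,\U_2))$ with $|\Delta(\cdot)|_{\ll^\infty} < \|\IO_\A\|^{-1}$ and verify that (FOSt) is UES via the Bohl-Perron criterion (\ref{e BPcr fo}). To this end, I would introduce the auxiliary signals $y(n) := \wt E w(n) \in \U_1$ and $v(n) := \Delta(n) y(n) \in \U_2$, rewrite (FOSt) with forcing $f(n)$ as
\begin{equation*}
w(n+1) = \A(n) w(n) + \wt D v(n) + f(n), \qquad w(0) = 0_\W,
\end{equation*}
and apply $\IS_\A$ (which is bounded on $\ll^q(\W)$ by (\ref{e BPcr fo}) applied to the unperturbed UES system) to obtain $w = \IS_\A(\wt D v + f)$. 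Composing with $\wt E$ on the left yields $y = \IO_\A v + \wt E \IS_\A f$, while the definition of $v$ gives $v(n) = \Delta(n) y(n)$. Denoting by $M_\Delta$ the pointwise multiplication operator $M_\Delta : [u(n)]_n \mapsto [\Delta(n) u(n)]_n$ from $\ll^q(\U_1)$ to $\ll^q(\U_2)$, these two equations combine into the fixed-point equation
\begin{equation*}
(I - M_\Delta \IO_\A)\, v \;=\; M_\Delta \wt E \IS_\A f \qquad \text{in } \ll^q(\U_2).
\end{equation*}

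Since $\|M_\Delta\|_{\ll^q(\U_1) \to \ll^q(\U_2)} \leq |\Delta(\cdot)|_{\ll^\infty}$, the operator norm of $M_\Delta \IO_\A$ on $\ll^q(\U_2)$ is strictly less than $1$, so $I - M_\Delta \IO_\A$ is boundedly invertible by the Neumann series. Therefore for every $f \in \ll^q(\W)$ the signal $v$ lies in $\ll^q(\U_2)$, hence so does $\wt D v$, and consequently $w = \IS_\A(\wt D v + f) \in \ll^q(\W)$. This shows that the unstructured input-state operator of the perturbed system (FOSt) is bounded on $\ll^q(\W)$; by (\ref{e BPcr fo}), (FOSt) is UES. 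As this holds for every admissible $\Delta(\cdot)$ with $|\Delta(\cdot)|_{\ll^\infty} < \|\IO_\A\|^{-1}$, we conclude $\r_\t(\A; \wt D, \wt E) \geq \|\IO_\A\|_{\ll^q(\U_2) \to \ll^q(\U_1)}^{-1}$.

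The main point requiring care is that the Bohl-Perron criterion (\ref{e BPcr fo}) is being invoked for the \emph{perturbed} system, which is legitimate because $\Delta(\cdot) \in \ll^\infty$ implies the perturbation $\wt D \Delta(n) \wt E \in \ll^\infty(\L(\W))$, so the perturbed coefficients are uniformly bounded in $\L(\W)$ as required. The other routine check, that $\|M_\Delta\|_{\ll^q(\U_1)\to\ll^q(\U_2)} = |\Delta(\cdot)|_{\ll^\infty}$ uniformly in $1 \leq q \leq \infty$, follows from the pointwise inequality $|\Delta(n)u|_{\U_2} \leq \|\Delta(n)\|\,|u|_{\U_1}$ and handles the extension of \cite{WH94} from $q=2$ to arbitrary $q$ since no Hilbert-space structure is used anywhere in the argument.
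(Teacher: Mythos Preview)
Your proof is correct and follows essentially the same approach as the paper's: both use the Neumann series based on the smallness of $M_\Delta \IO_\A$ in $\L(\ll^q(\U_2))$ to show that the input-state operator of the perturbed system (FOSt) is bounded on $\ll^q(\W)$, and then invoke the Bohl--Perron criterion (\ref{e BPcr fo}). The only cosmetic difference is that the paper writes out the resulting perturbed input-state operator explicitly as $\wt\IS = \IS_\A \M_{\wt D}\bigl[\sum_{j\ge 0}(\M_\Delta \IO_\A)^j\bigr]\M_\Delta \M_{\wt E}\IS_\A + \IS_\A$, whereas you arrive at the same object by first deriving and then solving the fixed-point equation for $v$; expanding your solution recovers the paper's formula verbatim.
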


In the case $q=2$, this result is known, see \cite[Theorem 3.1]{WH94}.
The proof of \cite{WH94} can be modified to cover $1 \le q < \infty$ if one uses \cite[Theorem 4.2]{PR84}
instead of \cite[Proposition 2.4 (iv)]{WH94}. However this proof does not work when $q=\infty$.
The following proof, which includes the case $q=\infty$, is based on the Bohl-Perron criterion (\ref{e BPcr fo}).

\begin{proof}[The proof of Theorem  \ref{t r>L fo}.]
It is enough to prove the second inequality.

For a function $\De : \ZZ^+ \to \L (\U_1,\U_2) $, we define the operator $\M_{\De} : \Ssp_+ (\U_1) \to \Ssp_+ (\U_2)$
of multiplication on $\De (\cdot)$ by $(\M_{\De} y) (n) = \De (n) y (n)$.
Similarly, by $\M_{\wt D}$ ($\M_{\wt E}$), the operator of multiplication on $\wt D$ (resp., $\wt E$) in the space $\Ssp_+ (\U_2)$
(resp.,  $\Ssp_+ (\W)$) is denoted,
\[
(\M_{\wt D} v) (n) = \wt D v (n) , \ \ v:\ZZ^+ \to \U_2 , \ \ \ \ (\M_{\wt E} w) (n) = \wt E w (n) , \ \ w:\ZZ^+ \to \W .
\]
The unstructured input-state
operator $\IS_\A$ and the input-output operator $\IO_\A$ are connected by
\begin{equation}
\IO_\A  = \M_{\wt E} \IS_\A \M_{\wt D} .
\end{equation}

Suppose (\ref{e A s}) is UES. Then $\IS_\A \in \L \left( \ll^q (\W) \right) $ and
$\IO_\A \in \L \left(\ll^q (\U_2),\ll^q (\U_1)\right) $. Assume that
$| \De (\cdot) |_{\ll^\infty} < \| \IO_\A \|_{\ll^q (\U_2) \to \ll^q (\U_1)}^{-1} $.
Since $| \De (\cdot) |_{\ll^\infty} = \| \M_{\De} \|_{\ll^q (\U_1) \to \ll^q (\U_2)}$,
we have
\[
\| \M_{\De} \IO_\A \|_{{\ll^q (\U_2) \to \ll^q (\U_2)}} < 1.
\]
This allows one to define an operator $\wt \IS \in \L \left(\ll^q (\W)\right)$ by
\[
\wt \IS :=  \IS_\A \M_{\wt D} \left[\sum_{j=0}^{+\infty} (\M_{\De} \IO_\A)^j \right] \M_{\De} \M_{\wt E} \IS_\A + \IS_\A .
\]
This definition implies $\wt \IS =  \IS_\A (\M_{\wt D} \M_{\De} \M_{\wt E} \wt \IS + I_{\ll^q } )$.
So $w(\cdot) = (\wt \IS f) (\cdot)$ is the solution to the system
\[
w(n+1) = \A (n) w (n) + \wt D \ \De (n) \ \wt E w(n) + f (n), \ n \geq 0 , \ \ w(0)=0 .
\]
In other words, $\wt \IS$ is the unstructured input-state operator of the perturbed system (FOSt).
Since $\wt \IS $ is bounded in $\ll^q (\W)$, the Bohl-Perron criterion (\ref{e BPcr fo}) implies that
the perturbed system (FOSt) is UES. This completes the proof.
\end{proof}

Now we apply the above theorem to strengthen some of the results of \cite{WH94} on \emph{linear first
order time-invariant systems} so that they fit to our needs.

When $\A (n) = A $ for all $n$ with $A \in \L (\W)$, system (\ref{e A s}) takes the form
\begin{equation}
w (n+1) = A w (n) ,  \quad n \in \ZZ^+.  \label{e Asys}
\end{equation}
The corresponding input-output operator and stability radii are denoted by $\IO_{A}$ and $\r_i (A; \wt D , \wt E) $,
$i=\c,\t$, respectively.

\begin{theorem}[cf. \cite{WH94}] \label{t r fo}
Suppose (\ref{e Asys}) is UES and $1 \leq q \leq \infty$.
Then $\IO_A \in \L \left( \ll^q (\U_2) , \ll^q (\U_1) \right)$ and the following statements hold.
\item[(i)]
$ \left( \max_{|\la|=1} \| \wt E (\la I_\W -A )^{-1} \wt D \|_{\U_2 \to \U_1} \right)^{-1} =
\r_\c (A; \wt D , \wt E) \ge
\r_\t (A; \wt D , \wt E)  \ge  \| \IO_A \|_{\ll^q (\U_2) \to \ll^q (\U_1)}^{-1} $.

\item[(ii)]
If, additionally,  $\W$, $\U_1$, $\U_2$ are Hilbert spaces (and $q=2$), then
\begin{multline*}
\Bigl( \max_{|\la|=1} \| \wt E (\la I_\W -A )^{-1} \wt D \|_{\U_2 \to \U_1} \Bigr)^{-1} = \r_\c (A; \wt D , \wt E)=
\r_\t (A; \wt D , \wt E) =
 \| \IO_A \|_{\ll^2 (\U_2) \to \ll^2 (\U_1)}^{-1}  .
\end{multline*}
\end{theorem}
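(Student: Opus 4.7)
The plan is to identify $\IO_A$ as a convolution operator with an explicit transfer function, sandwich $\r_\c$ between matching spectral bounds, and then read off the Hilbert-space equalities from Plancherel. Since (\ref{e Asys}) is UES, $\|A^n\|_{\W\to\W} \leq Ce^{-\nu n}$, so the spectral radius of $A$ is strictly less than one and $(\la I_\W - A)^{-1}$ is analytic on $\{|\la|\geq 1\}$. Solving $w(n+1) = Aw(n) + \wt D v(n)$ with $w(0)=0_\W$ gives $(\IO_A v)(n) = \sum_{j=0}^{n-1}\wt E A^{n-1-j}\wt D v(j)$, with exponentially decaying convolution kernel; Young's inequality then puts $\IO_A$ in $\L(\ll^q(\U_2),\ll^q(\U_1))$ for every $1 \leq q \leq \infty$. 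The corresponding transfer function is $\zeta\wt E(I_\W-\zeta A)^{-1}\wt D$, which equals $G(\la) := \wt E(\la I_\W - A)^{-1}\wt D$ evaluated at $\la = 1/\zeta$.

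Next I would establish the lower bound $\r_\c \geq (\max_{|\la|=1}\|G(\la)\|_{\U_2\to\U_1})^{-1}$. The maximum modulus principle applied to the $\L(\U_2,\U_1)$-valued analytic function $\zeta\mapsto\zeta\wt E(I_\W-\zeta A)^{-1}\wt D$ on $\overline{\DD(1)}$ gives $\sup_{|\la|\geq 1}\|G(\la)\| = \max_{|\la|=1}\|G(\la)\|$. For $\De \in \L(\U_1,\U_2)$, the factorization
\[
\la I_\W - A - \wt D\De\wt E = (\la I_\W - A)\bigl(I_\W - (\la I_\W - A)^{-1}\wt D\De\wt E\bigr)
\]
together with the standard fact that $(\la I-A)^{-1}\wt D\De\wt E \in \L(\W)$ and $\De G(\la) \in \L(\U_2)$ share the same nonzero spectrum shows: if $\|\De\|\cdot\max_{|\la|=1}\|G(\la)\|<1$, then $\|\De G(\la)\|<1$ for all $|\la|\geq 1$, hence $\la I_\W - (A+\wt D\De\wt E)$ is boundedly invertible there, so (FOSc) is UES.

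The matching upper bound comes from destabilizing by a rank-one perturbation. Continuity of $G$ on the compact circle $\TT$ gives $\la_0 \in \TT$ with $\|G(\la_0)\| = \max_{|\la|=1}\|G(\la)\|$. For $\ep>0$, pick a unit $u\in\U_2$ with $|G(\la_0)u|_{\U_1} > \|G(\la_0)\|-\ep$ and, by Hahn--Banach, $u^*\in\U_1^*$ with $|u^*|_{\U_1^*}=1$ and $u^*(G(\la_0)u)=|G(\la_0)u|_{\U_1}$. Setting $\De y := u^*(y)\,u / u^*(G(\la_0)u)$ gives $\|\De\|_{\U_1\to\U_2} = 1/|G(\la_0)u|$ and $\De G(\la_0) u = u$, so $1\in\sigma(\De G(\la_0))$; by the spectral equivalence, $\la_0 \in \sigma(A+\wt D\De\wt E)$ and (FOSc) is not UES. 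Letting $\ep\to 0$ yields $\r_\c \leq (\max_{|\la|=1}\|G(\la)\|)^{-1}$. Combined with $\r_\c\geq\r_\t$ (from the norm-preserving embedding of $\L(\U_1,\U_2)$ into $\ll^\infty(\L(\U_1,\U_2))$) and $\r_\t \geq \|\IO_A\|^{-1}$ (Theorem \ref{t r>L fo}), this completes (i).

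For (ii), in the Hilbert setting with $q=2$, Plancherel identifies $\ll^2(\U_j)$ with the $\U_j$-valued $L^2$-space on $\TT$, under which the convolution $\IO_A$ becomes multiplication by the operator-valued symbol and has norm equal to the essential supremum of the pointwise operator norm, namely $\max_{|\la|=1}\|G(\la)\|$. The chain of inequalities in (i) then collapses to equalities. The main obstacle will be the destabilization step: in infinite dimensions the unit ball of $\U_2$ is not compact, so $\|G(\la_0)\|$ need not be attained on a unit vector, forcing the $\ep$-approximation; the accompanying spectral-equivalence reduction, which passes between operators acting on the three different spaces $\W$, $\U_1$, $\U_2$, must also be set up with care.
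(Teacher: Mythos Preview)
Your argument is correct. The paper's own proof is deliberately brief: it simply cites \cite[Corollary~4.5 and Proposition~5.3]{WH94} for the formula $\r_\c=(\max_{|\la|=1}\|G(\la)\|)^{-1}$, for the Hilbert-space equalities in~(ii), and for the $q=2$ lower bound on $\r_\t$, and then invokes its own Theorem~\ref{t r>L fo} solely to extend the inequality $\r_\t\ge\|\IO_A\|_{\ll^q\to\ll^q}^{-1}$ to arbitrary $1\le q\le\infty$. What you have written is essentially a self-contained reconstruction of those cited results (Neumann-series lower bound, rank-one destabilization via the $\sigma(ST)\setminus\{0\}=\sigma(TS)\setminus\{0\}$ identity, Plancherel for the multiplier norm), combined with the same appeal to Theorem~\ref{t r>L fo}; so the route is the same, only with the black-boxed citations unpacked.
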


Recall that time-invariant system (\ref{e Asys}) is UES if and only
if the spectral radius of $A$ is less than $1$.
Statement (ii) and, in the case $q=2$, statement (i) of this theorem
follows immediately from a combination of \cite[Corollary 4.5 and Proposition 5.3]{WH94}.
Statement (i) for $q\neq 2$ is a combination of the above mentioned results of \cite{WH94}
with Theorem \ref{t r>L fo}.

\subsection{The proof of Theorem \ref{t r ga>0}: reduction of order}
\label{ss reduction}

Let $\ga \in \RR$ and let $\B = \B^{p,\ga}$ or $\B = \Bo^{\infty,\ga}$. We want to write
the Volterra convolution system (\ref{e Keq}) defined on the phase space $\B$
and the perturbed  systems (Sc)--(St) in the form of first order systems.

Recall that system (\ref{e Keq}) can be written in the form $x (n+1) = K x_n$, where $K \in \L ( \B, \X)$.
Define the backward shift operator $\ShB$ in $\X^{\ZZ^-}$ (and so in all the phase spaces) by
\begin{eqnarray*} \label{e Sh}
 (\ShB \vphi)^{[m]}  :=  \left\{
\begin{array}{ll}
0_\X , &  m = 0, \\
\vphi^{[m+1]}  , & m \leq -1
\end{array} \right. .
\end{eqnarray*}
Then the first order system (\ref{e Asys}) with
\begin{equation}
A := P_0^\T K + \ShB \in \L (\B) \label{e Ared}
\end{equation}
and  $\W= \B$ is associated with system (\ref{e Keq}) in the sense that
\begin{equation} \label{e x_n = wn}
x_n (\tau, \vphi) = w (n, \tau, \vphi) ,
\end{equation}
where $w ( \cdot ,\tau, \psi)$ is a unique solution to system (\ref{e Asys})
satisfying the initial condition $w (\tau) = \psi$.
The operator $A$ can be written in the form of matrix with $\L(\X)$-entries:
\[
A = \left( \begin{array}{ccccccc}
K(0) & K(1) & K(2) & \dots & K(j-1) & K(j)  & \dots \\
I_{\X} & 0_\X & 0_\X & \dots & 0_\X  & 0_\X & \dots \\
0_{\X} & I_\X & 0_\X & \dots & 0_\X  & 0_\X & \dots \\
\dots & \dots & \dots & \dots & \dots  & \dots \\
0_{\X} & 0_\X & 0_\X & \dots & 0_\X  & 0_\X & \dots \\
0_{\X} & 0_\X & 0_\X & \dots & I_\X  & 0_\X & \dots \\
\dots & \dots & \dots & \dots & \dots  & \dots & \dots
 \end{array} \right) .
\]

Given the structure $\{ E, D \}$ of perturbations (Sc)-(St), we define the structure $\{ \wt E, \wt D \}$ of perturbations (FOSc)-(FOSnt) putting
\begin{equation}
\wt E := E, \ \ \ \wt D := P_0^\T D . \label{e EDred}
\end{equation}
Then solutions of  (Sc)-(St) and of (FOSc)-(FOSt), resp., are also
connected by (\ref{e x_n = wn}). Moreover, the input-output
operators are identical
\begin{equation} \label{e IO=}
\IO_A = \IO_K .
\end{equation}

The above procedure may be considered as
a generalization to systems with infinite delay of the phase-space method, which is well developed for systems with bounded delay,
see e.g. \cite{PH93}.

\begin{proposition} \label{p r=r new}
Let $\ga >0$. Let  $\B = \B^{p,\ga}$ or $\B=\Bo^{\infty,\ga}$. Assume (\ref{e
Ared}) and (\ref{e EDred}).
Then:
\item[(i)] system (St) is UES in $\X$ w.r.t. $\B$ if and only if
(FOSt) is UES,
\item[(ii)] $\r_i (K; D , E; \B)  = \r_i (A; \wt D , \wt E)$, $i = \c,\t$.
\end{proposition}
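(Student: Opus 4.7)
The plan is to establish part (i) directly and then derive (ii) from it. The key observation is that the correspondence (\ref{e x_n = wn}) between states of the first-order system and prehistories of the Volterra system extends to the perturbed systems. Indeed, writing $\wt A(n) := A + \wt D \, \De(n) \, \wt E$ in matrix form one verifies that $P_0 \wt A(n) \vphi = K\vphi + D\,\De(n)\, E\vphi$ and that the other rows act as the shift $\ShB$; so (FOSt) is precisely the first-order reduction of (St), and $x_n(\tau, \vphi) = w(n, \tau, \vphi)$ holds for the perturbed solutions as well. Uniform boundedness of $\wt A(n)$ on $\B$ is guaranteed by $\De(\cdot) \in \ll^\infty(\L(\U_1,\U_2))$ together with boundedness of $D$, $E$, and $K$.

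For the easy direction of (i), UES of (FOSt) implies UES of (St) immediately via
\[
|x(n,\tau,\vphi)|_\X = |P_0 w(n,\tau,\vphi)|_\X \le \|P_0\|_{\B \to \X} \, |w(n,\tau,\vphi)|_\B.
\]
For the converse, suppose (\ref{e ExStX}) holds for (St) with constants $C,\nu$. To estimate $|x_n(\tau,\vphi)|_\B$, I would split the index $m \in \ZZ^-$ according to the sign of $n+m-\tau$. For $m \in [\tau-n, 0]$, the value $x(n+m) = x(n+m,\tau,\vphi)$ is controlled by (\ref{e ExStX}); for $\B = \B^{p,\ga}$ the weighted contribution
\[
\sum_{m=\tau-n}^{0} e^{p\ga m} |x(n+m)|_\X^p \le C^p |\vphi|_\B^p \, e^{-p\nu(n-\tau)} \sum_{k=0}^{n-\tau} e^{p(\nu-\ga)k}
\]
decays exponentially at any rate strictly less than $\min(\nu, \ga)$. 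For $m < \tau - n$, one has $x(n+m) = \vphi^{[n+m-\tau]}$; substituting $m' = n+m-\tau$ factors out $e^{-p\ga(n-\tau)}$ and leaves a sum bounded by $|\vphi|_\B^p$. Combining both contributions gives $|x_n|_\B \le C' e^{-\nu'(n-\tau)} |\vphi|_\B$ for any $\nu' < \min(\nu,\ga)$, which is exactly UE stability of (FOSt). The cases $\B = \B^{\infty,\ga}$ and $\B = \Bo^{\infty,\ga}$ are handled by the same splitting with suprema replacing sums.

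Part (ii) then follows almost formally from (i). In Definition \ref{d rUES}, the infima defining $\r_\t(K;D,E;\B)$ and $\r_\t(A;\wt D,\wt E)$ range over the same set $\ll^\infty(\L(\U_1,\U_2))$ with the same norm, and by (i) applied to the perturbed systems, the condition ``(St) is not UES in $\X$ w.r.t.\ $\B$'' is equivalent to ``(FOSt) is not UES'' for each disturbance $\De(\cdot)$. Hence the two infima coincide. The argument for $\r_\c$ is identical upon restricting to constant disturbances, using the norm-preserving embedding $\L(\U_1,\U_2) \hookrightarrow \ll^\infty(\L(\U_1,\U_2))$.

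The main obstacle is the splitting estimate in the hard direction of (i): the positivity of the resulting decay rate $\nu' < \min(\nu,\ga)$ depends crucially on the hypothesis $\ga > 0$, since the contribution from initial data carries the weight $e^{-p\ga(n-\tau)}$ and yields no decay when $\ga = 0$. This is precisely why the non-fading case $\Bo^{\infty,0}$ cannot be handled by the same direct reduction and requires the two-step reduction of Section \ref{ss p t r Bo}.
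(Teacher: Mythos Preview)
Your proposal is correct. The paper does not give a self-contained proof of (i): it simply cites \cite[Proposition~3.12 and Sec.~7.2]{BK10} and then notes that (ii) follows from (i). Your argument for (i) --- the splitting of the $\B$-norm of $x_n$ into the ``future'' part $m\in[\tau-n,0]$ controlled by (\ref{e ExStX}) and the ``initial data'' part $m<\tau-n$ that contributes the weight $e^{-p\ga(n-\tau)}$ --- is the standard reduction-of-order estimate and is presumably what \cite{BK10} contains; in any case it is correct and you identified correctly why $\ga>0$ is essential. Your derivation of (ii) from (i) is exactly the paper's.
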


Statement (i) is proved in our previous paper \cite[Proposition 3.12 and Sec.~7.2]{BK10}, statement (ii) follows from (i).

\begin{proposition} \label{p resolvent}
Assume (\ref{e Ared}) and (\ref{e EDred}). Assume that both $\zeta^{-1} I_\B
-A $ and $ I_\X - \zeta  \wh K (\zeta) $ are boundedly invertible
for a certain $\zeta \in \CC \setminus \{ 0\}$. Then for any $v
\in \U_2$:
\item[(i)]
$ (\zeta^{-1} I_\B -A )^{-1} \wt D v = \left\{ \zeta^{-m+1}
[I_\X - \zeta  \wh K (\zeta)]^{-1} D v \right\}_{m=-\infty}^{0} $,
\item[(ii)] if additionally $\zeta \in \DD (e^{\ga}) \setminus \{0\}$ (with $\ga$ from the definition of $\B$), then
\[
 \wt E (\zeta^{-1} I_\B -A )^{-1} \wt D = \zeta \wh E (\zeta)
[I_\X - \zeta  \wh K (\zeta)]^{-1} D .
\]
\end{proposition}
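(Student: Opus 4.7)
The plan is to exploit the block-shift structure of $A=P_0^\T K + \ShB$ and solve the resolvent equation
$(\zeta^{-1} I_\B - A)\vphi = \wt D v = P_0^\T Dv$ directly, reading off the result coordinate by coordinate. Since $\wt D v$ is supported only at coordinate $0$, the equation splits into an infinite geometric recursion at coordinates $m\leq -1$ and a single $\X$-valued equation at $m=0$. This should give both (i) as an explicit formula and (ii) by applying $\wt E = E$ to it.

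Concretely, for $m\leq -1$ the equation reads $\zeta^{-1}\vphi^{[m]} - \vphi^{[m+1]} = 0_\X$, forcing
$\vphi^{[-j]} = \zeta^{j}\vphi^{[0]}$ for all $j\geq 0$. Plugging this geometric pattern into the series representation
$K\vphi = \sum_{j\geq 0} K(j)\vphi^{[-j]}$ (valid by Remark \ref{r p=inf repr} for $\B\ne \B^{\infty,\ga}$, and by assumption \eqref{e as K=sum} when $\B = \B^{\infty,\ga}$) collapses it to $K\vphi = \wh K(\zeta)\vphi^{[0]}$. The $m=0$ equation then becomes $\zeta^{-1}\vphi^{[0]} - \wh K(\zeta)\vphi^{[0]} = Dv$, so $\vphi^{[0]} = \zeta[I_\X - \zeta\wh K(\zeta)]^{-1}Dv$, which gives precisely the formula claimed in (i). The bounded invertibility of $\zeta^{-1} I_\B - A$ guarantees existence and uniqueness of a solution in $\B$, and since the derivation above shows any solution must have this form, the candidate tuple automatically lies in $\B$.

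For (ii), one applies $\wt E = E$ to the formula in (i). The additional hypothesis $\zeta\in \DD(e^\ga)\setminus\{0\}$ is used twice: first, because Lemma \ref{l Kn est} gives $R[\wh E]\geq e^\ga$, it ensures that the Z-transform $\wh E(\zeta) = \sum_{j\geq 0}\zeta^j E(j)$ converges at $\zeta$; second, $(|\zeta|^{-1}e^\ga)^m\to 0$ as $m\to -\infty$, so the geometric tuple $\vphi$ belongs to $\Bo^{\infty,\ga}$, where Remark \ref{r p=inf repr} justifies the representation $E\vphi = \sum_{j\geq 0}E(j)\vphi^{[-j]}$. Substituting $\vphi^{[-j]} = \zeta^{j+1}[I_\X-\zeta\wh K(\zeta)]^{-1}Dv$ and pulling out the factor $\zeta$ yields $\wt E(\zeta^{-1}I_\B - A)^{-1}\wt D v = \zeta\wh E(\zeta)[I_\X - \zeta\wh K(\zeta)]^{-1}Dv$, as required.

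The only real obstacle is bookkeeping the convolution representations of $K$ and $E$ at the right level of generality, particularly in the $\B^{\infty,\ga}$ setting where such representations need not hold on the whole phase space; but for the explicit geometric $\vphi$ constructed here, the summability is automatic for $|\zeta|<e^\ga$, and the assumption \eqref{e as K=sum} handles the $K$ side directly.
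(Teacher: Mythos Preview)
Your argument is correct and follows essentially the same route as the paper's proof: solve the resolvent equation coordinate by coordinate to obtain the geometric tuple $\vphi^{[-j]}=\zeta^{j}\vphi^{[0]}$, identify $\vphi^{[0]}$ from the $m=0$ equation, and then for (ii) observe that this particular $\vphi$ lies in $\Bo^{\infty,\ga}$ when $|\zeta|<e^{\ga}$, so the series representation $E\vphi=\sum_{j\ge 0}E(j)\vphi^{[-j]}$ is available even in the $\B^{\infty,\ga}$ case. Your write-up is in fact slightly more explicit than the paper's about where the representation of $K$ (via Remark~\ref{r p=inf repr} or assumption~\eqref{e as K=sum}) enters the derivation of (i).
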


\begin{proof}
\textbf{(i)} Due to  (\ref{e Ared}) and (\ref{e EDred}), the
equality $ (\zeta^{-1} I_\B -A ) \vphi =  \wt D v$ can be
rewritten as the system
\begin{eqnarray*}
\zeta^{-1} \vphi^{[0]} & - & \sum_{j=0}^{+\infty} K (j) \vphi^{[-j]}  = D v \ \ (\text{for } m=0), \\
\zeta \vphi^{[m+1]} & = & \vphi^{[m]}  \ \ \text{ for } m = -1,-2, \dots \ .
\end{eqnarray*}
This leads to $\vphi^{[m]} = \zeta^{-m} \vphi^{[0]}$, $m \le -1$,
and in turn to $ \vphi^{[0]} = \zeta [I_\X - \zeta  \wh K
(\zeta)]^{-1} D v$. So $\vphi$ is found and gives (i).

\textbf{(ii)} \emph{First,} take a simplified point of view that
\begin{equation} \label{e Erep}
E \vphi = \sum_{j=0}^{+\infty} E
(j) \vphi^{[-j]}
\end{equation}
for all $\vphi \in \B$ (this holds for each $E \in \L (\B,\U_1)$ in all the phase spaces except $\B^{\infty,\ga}$, see Remark \ref{r p=inf repr}).
Since $E \in \L (\B, \U_1)$ (where $\B = \B^{p,\ga}$ or $\B = \Bo^{\infty,\ga}$),
we see that $\wh E (\zeta) $ is defined for $|\zeta|<e^\ga$.
Since $\wt E  =E$, we get using (i) that
\[
\wt E (\zeta^{-1} I_\B -A )^{-1} \wt D v = \sum_{j=0}^{+\infty} \zeta^{j+1} E
(j)   [I_\X - \zeta  \wh K (\zeta)]^{-1} D v = \zeta
\wh E (\zeta)  [I_\X - \zeta  \wh K (\zeta)]^{-1} D v , \ \ \zeta \in \DD (e^{\ga}) \setminus \{0\}.
\]

\emph{Now} assume $\B=\B^{\infty,\ga}$.  Generally, (\ref{e Erep}) does not hold in $\B=\B^{\infty,\ga}$.
However, (i) implies that $\vphi_0 := (\zeta^{-1} I_\B -A )^{-1} \wt D v$
 has the form  $\{ \zeta^{-m} \vphi_0^{[0]} \}_{m}$.
So, for $|\zeta|<e^\ga$, we see that $\vphi_0 \in \Bo^{\infty,\ga}$.
Since the representation (\ref{e Erep}) holds for all
$\vphi \in \Bo^{\infty,\ga}$, it holds for $\vphi_0$.
\end{proof}

\begin{proof}[Proof of Theorem \ref{t r ga>0}.]
\emph{Step 1: the proof of (\ref{e rGAS>}).} By Proposition \ref{p r=r new} (i), system (\ref{e Asys}) is UES exactly when (\ref{e Keq}) is UES in $\X$ w.r.t. $\B$.
The boundness of $\IO_K $ follows from those of $\IO_A$,
see (\ref{e IO=}) and the remarks before Theorem \ref{t r>L fo}.
Formula (\ref{e rGAS>}) follows from Theorem \ref{t r fo} (i), Propositions \ref{p
r=r new} (ii) and \ref{p resolvent} (ii).
Note that Proposition \ref{p resolvent} (ii) is applicable since $\zeta$ in (\ref{e rGAS>})
belongs to the unit circle and $e^{\ga}>1$.

\emph{Step 2: the proof of (\ref{e r=}) for the case $p=2$.}
Since $\X$ is a Hilbert space, we see that $\W=\B=\B^{2,\ga}$ is so. Theorem \ref{t r fo}
(ii) and Propositions \ref{p r=r new}-\ref{p resolvent} give
\[ 
\r_\c (K; D , E; \B) =
 \r_\t (K; D , E; \B) =
 \| \IO_K \|_{\ll^2 (\U_2) \to \ll^2 (\U_1)}^{-1} =
\Bigl( \max_{|\zeta|=1} \| \wh E (\zeta)  (I_\X - \zeta  \wh K (\zeta))^{-1} D \|_{\U_2 \to \U_1} \Bigr)\!^{-1}.
\] 

\emph{Step 3: the proof of (\ref{e r=}) for $p \neq 2$.} Let us take $\ga_1 \in (0,\ga)$ and
apply formula (\ref{e r emb}) to the continuous embedding
$\B^{\textstyle 2,\ga_1} \subset \B$. This gives
$
\r_\c (K; D , E; \B^{\textstyle 2,\ga_1} ) \ge \r_\c (K; D , E;
\B) .
$
Formula (\ref{e r=}) for $\B^{\textstyle 2,\ga_1}$ has been proved already on
Step 2 and gives
\[
\r_\c (K; D , E; \B^{2,\ga_1})  =
 \| \IO_K \|_{\ll^2 (\U_2) \to \ll^2 (\U_1)}^{-1} =
 \Bigl( \max_{|\zeta|=1} \| \wh E (\zeta)  [I_\X - \zeta  \wh K (\zeta)]^{-1} D \|_{\U_2 \to \U_1} \Bigr)^{-1}.
\]
These two formulae lead to
\[
\Bigl( \max_{|\zeta|=1} \| \wh E (\zeta)
[I_\X - \zeta  \wh K (\zeta)]^{-1} D \|_{\U_2 \to
 \U_1} \Bigr)^{-1} = \| \IO_K \|_{\ll^2 (\U_2) \to \ll^2 (\U_1)}^{-1}
 \ge \r_\c (K; D , E; \B) .
\]
Combining the latter with formula (\ref{e rGAS>}) obtained on Step 1, we
complete the proof.
\end{proof}

\subsection{Proof of Theorem \ref{t r Bo}: reduction to exponentially fading phase spaces}
\label{ss p t r Bo}

Let system (\ref{e Keq}) be UES in $\X$ w.r.t. $\Bo^{\infty,0}$.
According to Theorem \ref{t UEScr ga=0} (i) $\Leftrightarrow$ (ii),
system (\ref{e Keq}) is UES in $\X$ w.r.t. $\B^{2,\ga}$ for all $\ga \in
(0,\ga_0)$ with certain $\ga_0>0$. This and Theorem \ref{t r ga>0}
imply $\IO_K \in \L \left( \ll^q (\U_2), \ll^q (\U_1)\right) $.

From the assumption that $E (\cdot)$ decays exponentially, we see that there exists $\ga_1>0$ such that the operator $ E $ can
be extended by continuity to the spaces $\B^{2,\ga} $ with $\ga \in
(0,\ga_1)$. We keep the same notation $E$ for these extensions. Put
\[
\ga_2 := \min \{ \ga_0 , \ga_1 \} \ \text{ and } \ r_0 := \Bigl(
\max_{|\zeta|=1} \| \wh E (\zeta)  [I_\X - \zeta  \wh K
(\zeta)]^{-1} D \|_{\U_2 \to \U_1} \Bigr)^{-1} .
\]

Theorem \ref{t r ga>0} and implication (\ref{e r emb}) yield that for all $\ga
\in (0,\ga_2)$
\begin{eqnarray}
 \r_\c (D , E; \Bo^{\infty,0}) & \ge & \r_\c (D , E; \B^{2,\ga})
 =  r_0,
 \label{e r>r>res}
 \\
 \r_\t (D , E; \Bo^{\infty,0}) & \ge & \r_\t (D , E; \B^{2,\ga})  \ge
 \| \IO_K \|_{ \ll^q (\U_2) \to \ll^q (\U_1)}^{-1} .
   \label{e r>r>L-1}
\end{eqnarray}
In particular, $\r_\c (D , E; \B^{2,\ga}) $ does not depend on
the choice of $\ga \in (0,\ga_2)$.

Let us prove that
\[
 \r_\c (D , E; \Bo^{\infty,0}) =  r_0 .
\]
Taking (\ref{e r>r>res}) into account, it is enough to prove that $
\r_\c (D , E; \Bo^{\infty,0}) \le  r_0 $. Assume that the
time-invariant system (Sc) is UES in $\X$ w.r.t. $\Bo^{\infty,0}$.
Then, by Theorem \ref{t UEScr ga=0} (i) $\Leftrightarrow$ (ii) applied to
(Sc), system (Sc) is UES in $\X$ w.r.t. $\B^{2,\ga}$ for certain
$\ga \in (0,\ga_2)$. The definition of $\r_\c (D , E;
\B^{2,\ga}) $ imply that $\| \De \|_{\U_1 \to \U_2} \le \r_\c
(D , E; \B^{2,\ga}) = r_0$ (see anew (\ref{e r>r>res}) and (Sc)).
This imply the desired statement.

Combining the equality $ \r_\c (D , E; \Bo^{\infty,0}) =  r_0 $
with (\ref{e rc>rt>rnt}) and (\ref{e r>r>L-1}) we get (\ref{e rUES>
Bo}).

When $\X$ and $\U_{1,2}$ are Hilbert spaces and $q=2$, Theorem \ref{t r ga>0} implies $r_0 =
\| \IO_K \|_{ \ll^2 (\U_2) \to \ll^2 (\U_1)}^{-1}$. From this and
(\ref{e rUES> Bo}), one can see that (\ref{e rUES> Bo}) holds with
the equalities.

\section{Applications to systems of special types and examples}
\label{s appl}

\subsection{Sufficient conditions for UE stability of time-varying systems}
\label{ss Stab tests}

The following lemma is standard  and can be proved in the same way as in the first order case.

\begin{lemma} \label{l Q=wtQ}
Assume that $\B$  is one of the phase spaces considered in Section \ref{ss Ph sp aux op}.
Let $Q(n) \in \L (\B,\X)$ and $\wt Q (n) \in \L (\B,\X)$ for all $n \in \ZZ^+$.
If $Q(n) = \wt Q(n)$ for $n$ large enough, then the UE stabilities in $\X$ w.r.t. $\B$ of the
systems $x(n+1) = Q(n) x_n$ and $x(n+1) = \wt Q (n) x_n$ are equivalent.
\end{lemma}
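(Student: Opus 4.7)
The plan is to reduce to a single direction by symmetry of the hypothesis, and then split solutions according to whether the initial time $\tau$ is above or below the threshold $N$ past which $Q(n) = \wt Q(n)$.

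First I would fix $N \in \NN$ with $Q(n) = \wt Q(n)$ for all $n \ge N$, and assume that $x(n+1) = Q(n)x_n$ is UES in $\X$ w.r.t.\ $\B$ with constants $C,\nu$. For any $\tau \ge N$ the two recursions coincide on $[\tau,+\infty)$, so $x(\cdot,\tau,\vphi)$ is the same function for both systems, and the UES bound transfers verbatim. The remaining task is to handle the finitely many initial times $\tau \in \{0,1,\dots,N-1\}$ for the $\wt Q$-system.

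Let $y(\cdot) = x(\cdot,\tau,\vphi)$ denote the $\wt Q$-solution. The prehistory evolves by the identity $y_{n+1} = \ShB y_n + P_0^\T y(n+1)$ with $y(n+1) = \wt Q(n)\, y_n$. In each of the phase spaces from Section~\ref{ss Ph sp aux op}, direct computation gives $\|\ShB\|_{\B \to \B} \le e^{-\ga}$ (take $1$ if $\ga \le 0$) and $\|P_0^\T\|_{\X \to \B} \le 1$, so
\[
|y_{n+1}|_\B \le \bigl(\|\ShB\|_{\B \to \B} + \|\wt Q(n)\|_{\B \to \X}\bigr)\, |y_n|_\B .
\]
Iterating from $\tau$ to $N$, and using that each $\wt Q(n) \in \L(\B,\X)$ has finite norm and that only the finite set $n \in \{0,\dots,N-1\}$ is involved, I obtain a uniform bound $|y_N|_\B \le M\,|\vphi|_\B$ with $M$ independent of $\tau \in \{0,\dots,N-1\}$ and of $\vphi$.

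To conclude, note that for $n \ge N$ we have $y(n) = x(n,N,y_N)$ which is now a $Q$-solution (the two systems agreeing from time $N$ onward), so
\[
|y(n)|_\X \le C e^{-\nu(n-N)} |y_N|_\B \le CM e^{\nu N} e^{-\nu(n-\tau)} |\vphi|_\B,
\]
while for $\tau \le n \le N$ the iteration just established bounds $|y(n)|_\X$ by a constant multiple of $|\vphi|_\B$, which is absorbed into $e^{-\nu(n-\tau)}$ at the cost of the factor $e^{\nu N}$. Taking the larger constant $C' := CM e^{\nu N} + (\text{bound on }[\tau,N])$ yields UES in $\X$ w.r.t.\ $\B$ for the $\wt Q$-system with the same exponent $\nu$. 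Swapping the roles of $Q$ and $\wt Q$ gives the converse, completing the proof. There is no real obstacle here; the only point requiring slight attention is the boundedness of the back-shift on each of the phase spaces $\B^{p,\ga}$, $\Bo^{\infty,\ga}$, $\B^{\infty,\ga}$, which is immediate from the defining norms.
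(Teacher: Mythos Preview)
Your argument is correct and is essentially what the paper has in mind: the paper does not give a proof at all, merely stating that the lemma ``is standard and can be proved in the same way as in the first order case.'' Your write-up supplies exactly those details, via the reduction to the first-order evolution $y_{n+1} = (\ShB + P_0^\T \wt Q(n))\,y_n$ in $\B$ and a finite iteration on $[\tau,N]$.

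One small correction: the norm of the back-shift on $\B^{p,\ga}$ (and on $\Bo^{\infty,\ga}$, $\B^{\infty,\ga}$) is exactly $e^{-\ga}$, so for $\ga \le 0$ it equals $e^{-\ga} \ge 1$, not $1$ as your parenthetical suggests. This does not affect the proof, since you only need $\|\ShB\|_{\B\to\B} < \infty$ to get the uniform bound $|y_N|_\B \le M|\vphi|_\B$ over the finitely many steps.
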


Roughly speaking, a modification of a finite number of operators $Q(n)$ in the system $x(n+1) = Q(n) x_n$
does not influence its UE stability.

Let $E(j) \in \L (\X,\U)$ and $\Delta (j) \in \L (\U,\X)$ for all $j \in \ZZ^+$.
Consider the system
\begin{equation} \label{e dec sys}
x (n+1) = \Delta (n) \sum_{j=0}^{+\infty} E (j) x (n-j) , \ \ \ n \ge 0.
\end{equation}

Let us apply Theorem \ref{t r ga>0} to system (\ref{e dec sys}).

\begin{corollary}
Let $\X$ and $\U$ be Hilbert spaces and $\ga >0$.
Let $\| E (j) \|_{\X \to \U} \leq C e^{-\ga j}$ for all $j \in \ZZ^+$ with a certain constant $C$. Then system (\ref{e dec sys})
is UES in $\X$ w.r.t. $B^{1,\ga}$ (and so w.r.t. all $B^{p,\beta}$ with $\beta < \ga$) whenever
\begin{equation}
\limsup_{n \to + \infty} \| \Delta (n) \|_{\U \to \X} <  \frac{1}{ \max_{|\zeta|=1} \| \wh E (\zeta)  \|_{\X \to \U} } .
\end{equation}
\end{corollary}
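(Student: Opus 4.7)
The plan is to place (\ref{e dec sys}) into the framework of Section \ref{s pert}. I take the unperturbed convolution system (\ref{e Keq}) to be trivial, $K(j)\equiv 0_\X$, and view (\ref{e dec sys}) as the time-varying perturbation (St) with $\U_1=\U$, $\U_2=\X$, $D=I_\X$, disturbance $\De(n)$, and output operator $E\in\L(\B^{1,\gamma},\U)$ acting by $E\vphi:=\sum_{j\ge 0} E(j)\vphi^{[-j]}$. The hypothesis $\|E(j)\|_{\X\to\U}\le Ce^{-\gamma j}$ together with $|\vphi|_{\B^{1,\gamma}}=\sum_{j\ge 0}e^{-\gamma j}|\vphi^{[-j]}|_\X$ yields $|E\vphi|_\U\le C|\vphi|_{\B^{1,\gamma}}$, so $E$ is well defined and bounded, and its associated discrete function and Z-transform coincide with the $E(\cdot)$ and $\wh E(\zeta)$ of the statement.

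The trivial system ($K\equiv 0$) is UES in $\X$ with respect to every phase space because all solutions vanish for $n\ge 1$. Since $\X$, $\U_1$, $\U_2$ are Hilbert, Theorem \ref{t r ga>0} applies with $q=2$ on $\B=\B^{1,\gamma}$; and because $\wh K\equiv 0$, the equality (\ref{e r=}) collapses to
\[
\r_\t(I_\X,E;\B^{1,\gamma})=\Bigl(\max_{|\zeta|=1}\|\wh E(\zeta)\|_{\X\to\U}\Bigr)^{-1}=:\rho.
\]

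Finally, I convert $\limsup_n\|\De(n)\|_{\U\to\X}<\rho$ into the $\ell^\infty$-bound required by the definition of $\r_\t$. Choose $\rho'$ strictly between $\limsup_n\|\De(n)\|_{\U\to\X}$ and $\rho$, pick $N$ so that $\|\De(n)\|_{\U\to\X}\le\rho'$ for all $n\ge N$, and set $\wt\De(n):=\De(n)$ for $n\ge N$ and $\wt\De(n):=\Oz_{\U\to\X}$ otherwise. Then $|\wt\De(\cdot)|_{\ell^\infty}\le\rho'<\rho=\r_\t$, so the perturbed system $x(n+1)=\wt\De(n)Ex_n$ is UES in $\X$ w.r.t.\ $\B^{1,\gamma}$. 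Lemma \ref{l Q=wtQ} applied to $Q(n):=\De(n)E$ and $\wt Q(n):=\wt\De(n)E$, which agree for $n\ge N$, transfers this UE stability back to (\ref{e dec sys}). The parenthetical extension to $\B^{p,\beta}$ with $\beta<\gamma$ then follows from Proposition \ref{p st emb} together with $\B^{p,\beta}\subset\B^{\infty,\beta}\subset\B^{1,\gamma}$, which is supplied by the chain (\ref{e emB}). No serious obstacle is anticipated: the argument is a direct invocation of Theorem \ref{t r ga>0}, and the only slightly non-routine step is the $\limsup$-to-$\ell^\infty$ reduction via the finite-modification lemma.
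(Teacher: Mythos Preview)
Your proof is correct and follows essentially the same approach as the paper: take $K\equiv 0$, set up the structure $(D,E)=(I_\X,E)$ with $\U_1=\U$, $\U_2=\X$, invoke formula (\ref{e r=}) from Theorem \ref{t r ga>0}, and finish with Lemma \ref{l Q=wtQ} to pass from the $\limsup$ condition to a $\sup$ condition. You have simply filled in a few details the paper leaves implicit (the boundedness check for $E$, the explicit truncation $\wt\De$, and the embedding argument for the parenthetical claim).
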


\begin{proof}
Define an operator $E : \B^{1,\ga} \to \U$ by $E \vphi = \sum_{j=0}^{+\infty} E (j) \vphi^{[-j]} $.
Consider system (\ref{e dec sys}) as a perturbation of (\ref{e Keq}) with $K(j)=\Oz_{\X}$ for all $j$,
 $\U_1 = \U$, $\U_2 = \X$, and $D = I_{\X}$.
Then (\ref{e r=}) implies that (\ref{e dec sys})
is UES in $\X$ w.r.t. $B^{1,\ga}$ whenever
\begin{equation*}
\sup_{n \ge 0} \| \Delta (n) \|_{\U \to \X} <  \frac{1}{ \max_{|\zeta|=1} \| \wh E (\zeta)  \|_{\X \to \U} } .
\end{equation*}
The reference to Lemma \ref{l Q=wtQ} completes the proof.
\end{proof}

For operators $Q (n,j) \in \L (\X)$, $n,j \in \ZZ^+$, consider
the system
\begin{equation} \label{e gen sys}
x (n+1) = \sum_{j=0}^{+\infty} Q (n,j) x (n-j) , \ \ \ n \ge 0.
\end{equation}

\begin{corollary} \label{c Q p b}
Let $\X$ be a Banach space, $\beta >0$, and $1 \le p,p' \le \infty$ be such that $1/p+1/p' =1$.
Assume that for each $n \ge 0$ the sequence $\{ \| e^{j \beta} Q (n,j) \|_{\X \to \X} \}_{j=0}^{+\infty}$
belongs to $\ll^{p'}$. Then system (\ref{e gen sys}) is UES in $\X$ w.r.t. $\B^{p,\beta}$
whenever
\begin{eqnarray} \label{e Qcond p>1}
\limsup_{n\to \infty} \sum_{j=0}^{+\infty} \| e^{j\beta} Q (n,j) \|_{\X \to \X}^{p'} & < & \left( 1-e^{-p\beta} \right)^{1/(p-1)}
 \text{ in the case } 1<p\leq \infty,\\
\text{and } \limsup\limits_{n \to \infty} \ \sup_{j \ge 0} \| e^{j\beta} Q(n,j) \|_{\X \to \X} & < & 1-e^{-\beta} \qquad \text{ in the case } p= 1,
\label{e Qcond p=1}
\end{eqnarray}
where  $e^{-p\beta} $ and $1/(p-1)$ have to be understood as zero when
$p=\infty$.
\end{corollary}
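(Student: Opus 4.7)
The plan is to view system (\ref{e gen sys}) as $x(n+1) = Q(n) x_n$ with $Q(n)\vphi := \sum_{j=0}^{+\infty} Q(n,j)\vphi^{[-j]}$, interpret it as an unstructured time-varying perturbation of the trivial convolution system obtained by taking $K \equiv \Oz_\X$ in (\ref{e Keq}), and then apply Corollary \ref{c unstr} combined with Lemma \ref{l Q=wtQ}. The trivial system is UES in $\X$ w.r.t.\ every $\B^{p,\beta}$, so Corollary \ref{c unstr} applies. For $K=0$, $\wh K(\zeta)\equiv 0$ gives $\max_{|\zeta|=1}\|[I_\X-\zeta\wh K(\zeta)]^{-1}\|_{\X\to\X}=1$, and the unstructured input-state operator $\IS_0$ acts on $\ell^p(\X)$ as the (unit-norm) right shift $f\mapsto\{0_\X,f(0),f(1),\dots\}$. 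Hence the two-sided bound in (\ref{e r> unstr}) collapses, yielding $\r_\t(K=0;\B^{p,\beta})=(1-e^{-p\beta})^{1/p}$, with the convention that this equals $1$ when $p=\infty$.

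The second step is a Hölder bound on $\|Q(n)\|_{\B^{p,\beta}\to\X}$. For $1<p<\infty$,
\[
|Q(n)\vphi|_\X \;\le\; \sum_{j=0}^{+\infty}\|e^{j\beta}Q(n,j)\|_{\X\to\X}\cdot e^{-j\beta}|\vphi^{[-j]}|_\X
\;\le\; \Bigl(\sum_{j=0}^{+\infty}\|e^{j\beta}Q(n,j)\|_{\X\to\X}^{p'}\Bigr)^{\!1/p'}|\vphi|_{\B^{p,\beta}},
\]
and the endpoints are handled directly ($p=1$: pull the supremum outside the sum and use $\sum_j e^{-j\beta}|\vphi^{[-j]}|_\X=|\vphi|_{\B^{1,\beta}}$; $p=\infty$: the $\ell^1$-sum bound is immediate). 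Using the identity $\frac{1}{p'(p-1)}=\frac{1}{p}$ (equivalently $p'=\frac{p}{p-1}$), the hypothesis (\ref{e Qcond p>1}) is nothing but $\limsup_{n\to\infty}\|Q(n)\|_{\B^{p,\beta}\to\X}<(1-e^{-p\beta})^{1/p}=\r_\t(K=0;\B^{p,\beta})$; and (\ref{e Qcond p=1}) is the corresponding statement for $p=1$.

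Finally, to pass from $\limsup$ to $\sup$ (which is what Definition \ref{d rUES} requires), choose $N$ so that $\sup_{n\ge N}\|Q(n)\|_{\B^{p,\beta}\to\X}<\r_\t(K=0;\B^{p,\beta})$, and define $\wt Q(n)=\Oz_{\B\to\X}$ for $n<N$ and $\wt Q(n)=Q(n)$ for $n\ge N$. Then $|\wt Q(\cdot)|_{\ell^\infty(\L(\B^{p,\beta},\X))}<\r_\t(K=0;\B^{p,\beta})$, so the system $x(n+1)=\wt Q(n)x_n$ (which is a perturbation of the type (St) with $D=I_\X$, $E=I_{\B^{p,\beta}}$, $\Delta(n)=\wt Q(n)$) is UES in $\X$ w.r.t.\ $\B^{p,\beta}$; Lemma \ref{l Q=wtQ} then transfers UE stability to the original system (\ref{e gen sys}). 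The only real point requiring care is the exponent bookkeeping $\frac{1}{p'(p-1)}=\frac{1}{p}$ and the consistent interpretation of the conventions $e^{-p\beta}=0$, $1/p=0$, $(1-e^{-p\beta})^{1/(p-1)}=1$ in the limit $p=\infty$; everything else is routine once the reduction to Corollary \ref{c unstr} with $K=0$ is identified.
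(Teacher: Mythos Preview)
Your proof is correct and follows essentially the same route as the paper: both treat (\ref{e gen sys}) as an unstructured time-varying perturbation of the trivial convolution system $K\equiv\Oz_\X$, invoke Corollary~\ref{c unstr} (using that $\|\IS_0\|_{\ell^p\to\ell^p}=1$), bound $\|Q(n)\|_{\B^{p,\beta}\to\X}$ by H\"older, and finish with Lemma~\ref{l Q=wtQ} to pass from $\limsup$ to $\sup$. Your additional remark that the two ends of (\ref{e r> unstr}) coincide here, so that $\r_\t$ is computed exactly, is a nice observation but not needed for the argument --- the lower bound alone suffices.
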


\begin{proof}
Define operators $N(n) : \B^{p,\beta} \to \X$ by $N(n) \vphi = \sum_{j=0}^{+\infty} Q (n,j) \vphi^{[-j]} $
and consider system (\ref{e gen sys}) as an unstructured perturbation of (\ref{e Keq}) with $K(j)=\Oz_{\X}$, $j \ge 0$.
The norm of the unstructured input-state operator $\IS_K $ equals 1 in each of $\ll^s$-spaces.
By Corollary \ref{c unstr}, system (\ref{e gen sys}) is UES in $\X$ w.r.t. $\B^{p,\beta}$ whenever
$\sup_n \| N (n) \|_{\B^{p,\beta} \to \X} < \left( 1-e^{-p\beta} \right)^{1/p}$.
Since
\begin{eqnarray}
\| N (n) \|_{\B^{p,\beta} \to \X} & \le  & \left( \sum_{j=0}^{+\infty} \| e^{j\beta} Q (n,j) \|_{\X \to \X}^{p'}  \right)^{1/p'}
 \text{ when } 1<p\leq \infty,\\
\text{and } \ \ \| N (n) \|_{\B^{1,\beta} \to \X} & \le & \sup_{j \ge 0} \| e^{j\beta} Q(n,j) \|_{\X \to \X}
\qquad \text{ when } p= 1,
\end{eqnarray}
we see that (\ref{e gen sys}) is UES in $\X$ w.r.t. $\B^{p,\beta}$ if
\begin{eqnarray}
\sup_n \sum_{j=0}^{+\infty} \| e^{j\beta} Q (n,j) \|_{\X \to \X}^{p'} & < & \left( 1-e^{-p\beta} \right)^{p'/p} = \left( 1-e^{-p\beta} \right)^{1/(p-1)}
\ \text{for }  1<p\leq \infty,\\
\text{and } \ \  \sup_{n,j} \| e^{j\beta} Q(n,j) \|_{\X \to \X} &  <  & 1-e^{-\beta}  \ \ \text{for }  p= 1.
\end{eqnarray}
Lemma \ref{l Q=wtQ} completes the proof.
\end{proof}

Now Proposition \ref{p st emb} makes it possible to give sufficient conditions of UE stability w.r.t. the non-fading phase spaces $\B^{\infty,0}$ and
$\Bo^{\infty,0}$.

\begin{corollary}
Let $\X$ be a Banach space, $0 \leq \ga < \alpha $, and $1 \le q \le \infty$.
Assume that for each $n \ge 0$ the operator $Q(n) = \sum_{j=0}^{+\infty} Q (n,j) \vphi^{[-j]} $ is bounded
in $\B^{q,\ga}$ (in $\Bo^{\infty,\ga}$). Here
the convergence of the infinite sum is understood in the sense of the norm topology of $\X$.

Assume that, for $n$ large enough, there exist constants $C(n)$ such that
$Q(n,j) \leq C(n) e^{-j \alpha}$ for all $j \ge 0 $. Then each of conditions (\ref{e Qcond p>1}), (\ref{e Qcond p=1})
with arbitrary $\beta \in (\ga,\alpha)$ and arbitrary $p$ in the range $1 \le p \le \infty$ implies the UE stability of system (\ref{e gen sys}) in $\X$
w.r.t. $\B^{q,\ga}$ (resp., w.r.t. $\Bo^{\infty,\ga}$).
\end{corollary}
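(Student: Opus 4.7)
The plan is to reduce to the fading-phase-space case handled by Corollary \ref{c Q p b} and then push the result back to $\B^{q,\ga}$ (or $\Bo^{\infty,\ga}$) via the embedding \eqref{e emB} and Proposition \ref{p st emb}. Fix $\beta \in (\ga,\alpha)$ and $p \in [1,\infty]$ satisfying one of \eqref{e Qcond p>1}, \eqref{e Qcond p=1}. The first step is to show that, for $n$ sufficiently large, the formal series $\sum_{j \ge 0} Q(n,j) \vphi^{[-j]}$ defines a bounded operator on the larger space $\B^{p,\beta}$. From the hypothesis $\|Q(n,j)\|_{\X\to\X} \le C(n) e^{-j\alpha}$ (for $n \ge n_0$) and $\beta < \alpha$, one obtains $\|e^{j\beta} Q(n,j)\|_{\X\to\X} \le C(n) e^{-j(\alpha-\beta)}$, so the sequence $\{\|e^{j\beta} Q(n,j)\|_{\X\to\X}\}_{j \ge 0}$ belongs to every $\ll^{p'}$; a standard Hölder estimate then shows $Q(n) \in \L(\B^{p,\beta},\X)$ for each $n \ge n_0$.

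Next, I would modify finitely many coefficients: define $\wt Q(n) = \Oz_{\B^{p,\beta} \to \X}$ for $n < n_0$ and $\wt Q(n) = Q(n)|_{\B^{p,\beta}}$ for $n \ge n_0$. The new system $x(n+1) = \wt Q(n) x_n$ is defined on $\B^{p,\beta}$, and the $\limsup$-type inequalities \eqref{e Qcond p>1}, \eqref{e Qcond p=1} for the original $Q(n,j)$ are insensitive to changes at finitely many $n$, so they are automatically inherited by $\wt Q(n)$. Consequently, Corollary \ref{c Q p b} applies to the modified system with parameters $(p,\beta)$, yielding its UE stability in $\X$ w.r.t. $\B^{p,\beta}$.

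Finally, I would transfer this to $\B^{q,\ga}$ (resp.\ $\Bo^{\infty,\ga}$). Since $\ga < \beta$, the chain \eqref{e emB} gives the continuous embeddings $\B^{q,\ga} \subset \B^{\infty,\ga} \subset \B^{1,\beta} \subset \B^{p,\beta}$ and $\Bo^{\infty,\ga} \subset \B^{\infty,\ga} \subset \B^{p,\beta}$. Proposition \ref{p st emb} then upgrades UE stability of the modified system from $\B^{p,\beta}$ to $\B^{q,\ga}$ (or $\Bo^{\infty,\ga}$). Since $\wt Q(n) = Q(n)$ as operators on $\B^{q,\ga}$ for $n \ge n_0$, Lemma \ref{l Q=wtQ} yields UE stability of the original system $x(n+1) = Q(n) x_n$ in $\X$ w.r.t. $\B^{q,\ga}$ (resp.\ $\Bo^{\infty,\ga}$).

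The only nontrivial point is the bookkeeping in the first step: one must verify that the extension $\wt Q(n)$ on $\B^{p,\beta}$ really restricts to the original $Q(n)$ on $\B^{q,\ga}$ (so that Lemma \ref{l Q=wtQ} applies), which is clear because both are given by the same norm-convergent series on $\B^{q,\ga}$. Everything else is a bookkeeping exercise; the real content of the proof is already packaged in Corollary \ref{c Q p b} (stability in the fading space) and Proposition \ref{p st emb} (transfer along the embedding scale).
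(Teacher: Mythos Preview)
Your proposal is correct and follows essentially the same approach as the paper's proof: modify finitely many $Q(n)$ so that the system is defined on $\B^{p,\beta}$, apply Corollary~\ref{c Q p b} there, then descend to $\B^{q,\ga}$ (or $\Bo^{\infty,\ga}$) via Proposition~\ref{p st emb} and invoke Lemma~\ref{l Q=wtQ}. Your write-up supplies more detail (the H\"older estimate for boundedness on $\B^{p,\beta}$ and the verification that the extension restricts correctly), but the logical skeleton is identical.
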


\begin{proof}
According to the assumptions, it is possible to modify $Q(n,j)$ for $0 \le n \le n_0 < \infty$ such that
the modified system is defined on each of phase spaces $\B^{p,\beta}$ with $\beta < \alpha$.
The UE stability of the initial and the modified system in $\X$ w.r.t. $\B^{q,\ga}$ are equivalent due to Lemma \ref{l Q=wtQ}.
Applying Corollary \ref{c Q p b} to the modified system, we see that it is UES in the $\B^{p,\beta}$ settings.
For $\beta > \ga$ Proposition \ref{p st emb} implies that both the modified and the original system is UES in $\X$
w.r.t. $\B^{q,\ga}$. For the case of $\Bo^{\infty,\ga}$, the proof is the same.
\end{proof}

The condition (\ref{e Qcond p>1}) for $p=\infty$ and $\beta>0$ improves
the sufficient condition for UE stability in the resolvent matrix sense given by \cite[formula (3.1)]{CKRV00}.

\subsection{A delayed feedback scheme} \label{ss Del feedback}

Consider another feedback scheme given by Fig.\ref{fig2}.
\begin{figure}[ht]
\centering
\setlength{\unitlength}{0.0007in}
\begingroup\makeatletter\ifx\SetFigFont\undefined%
\gdef\SetFigFont#1#2#3#4#5{%
  \reset@font\fontsize{#1}{#2pt}%
  \fontfamily{#3}\fontseries{#4}\fontshape{#5}%
  \selectfont}%
\fi\endgroup%
{\renewcommand{\dashlinestretch}{30}
\begin{picture}(7224,2439)(0,-10)
\path(912,2412)(6312,2412)(6312,1212)
    (912,1212)(912,2412)
\path(6312,1812)(7212,1812)(7212,312)(4512,312)
\path(2712,612)(2712,12)(4512,12)
    (4512,612)(2712,612)
\path(2712,312)(12,312)(12,1812)
    (912,1812)(762,1887)
\path(762,1737)(912,1812)
\path(4662,387)(4512,312)(4662,237)
\put(237,2037){\makebox(0,0)[lb]{{\SetFigFont{12}{12}{\rmdefault}{\mddefault}{\updefault}$v(n)$}}}
\put(6537,2037){\makebox(0,0)[lb]{{\SetFigFont{12}{12}{\rmdefault}{\mddefault}{\updefault}$y(n)$}}}
\put(2880,280){\makebox(0,0)[lb]{{\SetFigFont{12}{12}{\rmdefault}{\mddefault}{\updefault}$v(n)=\N(n) y_n$}}}
\put(1362,1750){\makebox(0,0)[lb]{{\SetFigFont{10}{10}{\rmdefault}{\mddefault}
{\updefault}${\displaystyle x(n+1)=\sum_{j=0}^{+\infty} K(j)x(n-j)+Dv(n)}$}}}
\put(1362,1412){\makebox(0,0)[lb]{{\SetFigFont{10}{10}{\rmdefault}{\mddefault}{\updefault}$~~y(n)
= \mathfrak{E} x (n)$}}}
\end{picture}
}
\caption{Delayed feedback.}
\label{fig2}
\end{figure}
Here $y (n) \in \Ufd_1$  is an output depending now only on the state $x (n)$ the system, but
the input $v(n) \in \Ufd_2 $ is connected with the output by $v(n) = \N (n) y_n$
and so depends on the prehistory of the output.
Here the Banach space $\Ufd_2$ ($\Ufd_1$) is the input  (resp., output) space.

In this section, we will use the space $\B^{p,\ga} ( \Ufd_1)$, which is defined similar to $\B^{p,\ga} $, but with $\Ufd_1$
instead of $\X$ (so that $\B^{p,\ga} =\B^{p,\ga} (\X)$).
Suppose that $\E \in \L (\X, \Ufd_1)$ and that the prehistory of the output
$y_n = \{ y_n^{[m]} \}_{m=-\infty}^{0} := \{ y (n+m) \}_{m=-\infty}^{0}$
 belongs to $\B^{p,\ga} ( \Ufd_1)$. Then it is natural to assume that \emph{unknown feedback operators} $\N (n)$ map $\B^{p,\ga} (\Ufd_1)$
to $\Ufd_2$.
One can define corresponding
stability radii similar to that of Section \ref{ss r def}.

However, we do not want to introduce a new notation because corresponding perturbed systems
\emph{can be considered as particular cases} of systems (Sc)-(Snt). For this purpose, consider
the diagonal operator
\[
M_\E : \B^{p,\ga} \to \B^{p,\ga} (\Ufd_1) \ \text{ defined by } \
(M_\E \vphi)^{[m]} = \E \vphi^{[m]} , \ \ m \in \ZZ^-,
\]
and put
\[
\U_1 = \B^{p,\ga} (\Ufd_1), \ \ \ \U_2 = \Ufd_2, \ \ \ \text{ and } E = M_\E.
\]
Then the following perturbed system can be associated with Fig.\ref{fig2}:
\begin{equation} \label{e per sys del f}
  x(n+1)=\sum_{j=0}^{+\infty} \ K(j) \ x(n-j) + D \ \N (n) \ M_\E x_n .
 \end{equation}
So $\r_i ( D , M_\E; \B)$, $i= \c, \t$, are \emph{the stability radii for the delayed feedback scheme.}

\begin{remark} \label{r rad MM08}
In the case when $K(j)$ are positive compact operators on a complex Banach lattice $\X$, and $D$, $E$,
$\N (n)$ satisfy certain additional assumptions, a radius of asymptotic stability defined similar to
$\r_\c ( D , M_\E; \Bo^{\infty,0})$ was considered in \cite[Sect.4]{MN08}.
\end{remark}

The input-output operator $\IOfd_{K}: \Ssp_+ (\U_2) \to \Ssp_+ (\U_1)$
associated with Fig.\ref{fig2} is defined by $\IOfd_K : v (\cdot) \to y (\cdot)$,  where $y
(n) = \E x (n)$, $n \ge 0$, and $x (\cdot)$ is the solution to the
system (\ref{e Keq in}).

Note that the operator $\IO_K$ associated with (\ref{e per sys del f})
differs from $\IOfd_{K}$, though they are obviously connected by
\begin{equation*} \label{e IO=(I0fd)}
(\IO_K v) (n) = M_\E x_n = \{ \dots, \E x(n-1) , \E x(n) \} = \{  \dots , (\IOfd_{K} v) (n-1) , (\IOfd_{K} v) (n) \} .
\end{equation*}

\begin{corollary} \label{c del feedback ga>0}
 Let $\ga >0$ and $1 \le p \le \infty$. Let $\B = \B^{p,\ga}$ or $\B=\Bo^{\infty,\ga}$
 (in the latter case $p$ is assumed to be equal to $\infty$ and $M_\E$ is assumed to be restricted to $\Bo^{\infty,\ga}$).
 Let (\ref{e Keq}) be UES in $\X$ w.r.t. $\B$. Then
\begin{multline}
\left( 1 - e^{-p\ga} \right)^{1/p} \Bigl( \max_{|\zeta|=1}
 \| \E [I_\X - \zeta  \wh K (\zeta)]^{-1} D \|_{\Ufd_2 \to \Ufd_1} \Bigr)^{-1}
= \r_\c (D , M_\E; \B)  \ge  \r_{\t} (D , M_\E; \B) \ge \\ \ge
\left( 1-e^{-p\ga} \right)^{1/p}
\| \IOfd_K \|_{ \ll^p (\Ufd_2) \to \ll^p ( \Ufd_1) }^{-1} > 0,
\label{e rGAS> fd}
\end{multline}
where  $e^{-p\ga} $ and $1/p$ have to be understood as zero when $p=\infty$.

If $p=2$ and $\X$, $\Ufd_1$, $\Ufd_2$ are Hilbert spaces, the equalities hold
in (\ref{e rGAS> fd}).
\end{corollary}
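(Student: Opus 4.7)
The plan is to recognize the delayed-feedback perturbation (\ref{e per sys del f}) as a special instance of the structured perturbation (St) with the assignments $\U_1 = \B^{p,\ga}(\Ufd_1)$, $\U_2 = \Ufd_2$, $E = M_\E$, and then apply Theorem \ref{t r ga>0} directly. The remaining work reduces to two norm computations: rewriting $\wh E(\zeta)$ in terms of $\E$, and rewriting the input--output operator $\IO_K$ of Definition \ref{d IO K} in terms of $\IOfd_K$ from Fig.~\ref{fig2}. Both computations will produce the same scaling factor $(1-e^{-p\ga})^{1/p}$, in parallel with what happens in Corollary \ref{c unstr}.

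For the Z-transform step, definition (\ref{e def G dot}) gives $E(n) = M_\E P_{-n}^\T$, and inspecting coordinates shows that $\wh E(\zeta)\psi$ is the tuple in $\B^{p,\ga}(\Ufd_1)$ with $m$-th coordinate $\zeta^{-m}\E\psi$. Taking $|\zeta|=1$ and computing the $\B^{p,\ga}(\Ufd_1)$-norm gives a geometric series yielding
\[
\|\wh E(\zeta)[I_\X-\zeta\wh K(\zeta)]^{-1}Dv\|_{\B^{p,\ga}(\Ufd_1)} = (1-e^{-p\ga})^{-1/p}\,\|\E[I_\X-\zeta\wh K(\zeta)]^{-1}Dv\|_{\Ufd_1},
\]
with the usual convention for $p=\infty$; the $\Bo^{\infty,\ga}$ case reduces to a supremum that attains its value at $m=0$. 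Plugging this into the equality $\r_\c = (\max_{|\zeta|=1}\|\wh E(\zeta)[\cdot]D\|)^{-1}$ furnished by Theorem \ref{t r ga>0} produces the leftmost equality in (\ref{e rGAS> fd}).

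For the input--output step, the identity $(\IO_K v)(n) = M_\E x_n = \{\E x(n+m)\}_{m\le 0}$ together with $x(k)=0$ for $k\le 0$ gives
\[
\|\IO_K v\|_{\ell^p(\B^{p,\ga}(\Ufd_1))}^p = \sum_{n=1}^{+\infty}\sum_{m=1-n}^{0} e^{pm\ga}|\E x(n+m)|_{\Ufd_1}^p .
\]
Switching the order of summation (the double sum ranges over $1\le k\le n$ with $k=n+m$) and summing the resulting geometric series in $n\ge k$ gives $\|\IO_K\| = (1-e^{-p\ga})^{-1/p}\|\IOfd_K\|$, exactly as in the derivation of (\ref{e IO=IS}); the $p=\infty$ case collapses to $\|\IO_K\| = \|\IOfd_K\|$ by taking the supremum. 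Inserting this into the lower bound for $\r_\t$ from Theorem \ref{t r ga>0} gives the right-hand inequality in (\ref{e rGAS> fd}). Boundedness of $\IO_K$ under the UE-stability hypothesis, which Theorem \ref{t r ga>0} asserts, transfers to boundedness of $\IOfd_K$ and hence to the strict positivity claim at the end of (\ref{e rGAS> fd}).

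Finally, in the Hilbert setting with $p=2$, the image space $\B^{2,\ga}(\Ufd_1)$ is again Hilbert, so Theorem \ref{t r ga>0} yields all equalities rather than just inequalities, and the chain (\ref{e rGAS> fd}) collapses accordingly. The only delicate point is a uniform bookkeeping of the $p=\infty$ conventions and the distinction between $\B^{\infty,\ga}$ (which is outside the hypotheses of the corollary) and $\Bo^{\infty,\ga}$ (where both the Z-transform norm and the input--output norm simplify cleanly to the stated form with factor $1$); everything else is a routine translation through the identifications $(\U_1,\U_2,E)=(\B^{p,\ga}(\Ufd_1),\Ufd_2,M_\E)$.
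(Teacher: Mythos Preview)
Your proposal is correct and follows essentially the same approach as the paper: apply Theorem \ref{t r ga>0} with $(\U_1,\U_2,E)=(\B^{p,\ga}(\Ufd_1),\Ufd_2,M_\E)$, compute $\wh{M_\E}(\zeta)\psi = \{\zeta^{-m}\E\psi\}_{m\le 0}$ to obtain the $(1-e^{-p\ga})^{-1/p}$ factor on the transfer-function side, and relate $\IO_K$ to $\IOfd_K$ via a geometric sum (paralleling (\ref{e IO=IS})) on the input--output side. Your treatment is in fact slightly more explicit than the paper's (which defers the double-sum computation to the phrase ``obtained in the same way as (\ref{e IO=IS})''), and you correctly note that choosing $q=p$ in Theorem \ref{t r ga>0} and checking that $\B^{2,\ga}(\Ufd_1)$ is Hilbert are the only additional bookkeeping points.
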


\begin{proof}
It is enough to apply Theorem \ref{t r ga>0} and to perform calculations similar to that of Section \ref{s unstr}.
The first equality in (\ref{e rGAS> fd}) requires additional explanations.
The discrete function $M_\E ( \cdot)$ constructed by the operator $M_\E$ (see Section \ref{ss Ph sp aux op})
is given by
\[
M_\E (n) = M_\E P_{-n}^\T = P_{-n}^\T \E .
\]
Here we extended the definition of $P_{-n}^\T$ given in Section \ref{ss Ph sp aux op}
to the space $\B^{p,\ga} ( \Ufd_1)$.
So
\[
\left( \wh M_\E (\zeta) \psi \right)^{[m]} =
\zeta^{-m}  \E \psi , \ \  m \in \ZZ^- .
\]
When $|\zeta| =1 $, $v \in \Ufd_2$, and $p<\infty$,
\begin{multline*}
| \wh M_\E (\zeta)  [I_\X - \zeta  \wh K (\zeta)]^{-1} D  v |_{\U_1}^p  =
| \wh M_\E (\zeta)  [I_\X - \zeta  \wh K (\zeta)]^{-1} D  v |_{\B^{p,\ga} (\Ufd_1)}^p =  \\
\qquad \sum_{m=-\infty}^{0} e^{pm\ga} | \E [I_\X - \zeta  \wh K (\zeta)]^{-1} D  v |_{\Ufd_1}^p =
(1-e^{-p\ga})^{-1}  | \E [I_\X - \zeta  \wh K (\zeta)]^{-1} D  v |_{\Ufd_1}^p .
\end{multline*}
This gives the desired equality (with standard changes for $p=\infty$).

To get the last inequality in (\ref{e rGAS> fd}), we use the formula
\begin{equation*} \label{e IO=IOfd}
\| \IO_K \|_{ \ll^p (\Ufd_2) \to \ll^p \left( \B^{p,\ga} (\Ufd_1) \right)} = \left( 1-e^{-p\ga} \right)^{-1/p}
\| \IOfd_K \|_{ \ll^p (\Ufd_2) \to \ll^p ( \Ufd_1) } ,
\end{equation*}
which can be obtained in the same way as (\ref{e IO=IS}).
\end{proof}

\begin{corollary} \label{c r=0 fd}
If $\E \neq 0_{\X \to \Ufd_1}$ and $D \neq 0_{\Ufd_2 \to \X}$, then $ \r_i (D , M_\E; \Bo^{\infty,0})  = 0 ,
i=\c,\t$ (here $M_\E$ is assumed to be restricted to $\Bo^{\infty,0}$).
\end{corollary}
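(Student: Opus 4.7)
The plan is to mimic the reasoning behind Corollary \ref{c unstr Bo}, applying Proposition \ref{p r=0} to the perturbation structure $(D, M_\E)$ in its contrapositive form. The key observation will be that the discrete function $M_\E(\cdot)$ associated (via (\ref{e def G dot})) with the operator $M_\E$ has constant, nonzero norm, so it fails to decay exponentially.

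First I would dispose of the degenerate case where (\ref{e Keq}) is not UES in $\X$ w.r.t.\ $\Bo^{\infty,0}$: then the zero disturbance $\Delta \equiv 0_{\U_1 \to \U_2}$ already yields an unstable perturbed system, so by Definition \ref{d rUES} both $\r_\c(D, M_\E; \Bo^{\infty,0}) = 0$ and (using the embedding (\ref{e rc>rt>rnt})) $\r_\t(D, M_\E; \Bo^{\infty,0}) = 0$ trivially. So from now on I assume (\ref{e Keq}) is UES in $\X$ w.r.t.\ $\Bo^{\infty,0}$, which puts us exactly in the setting of Proposition \ref{p r=0}.

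Next I would compute $M_\E(\cdot)$. By definition, $M_\E(n) = M_\E P_{-n}^\T \in \L(\X, \Bo^{\infty,0}(\Ufd_1))$, and for any $\psi \in \X$ the tuple $M_\E P_{-n}^\T \psi \in \Bo^{\infty,0}(\Ufd_1)$ has $\E\psi$ at position $-n$ and zero elsewhere; its sup-norm (with weight $e^{0 \cdot m} = 1$) is therefore $|\E\psi|_{\Ufd_1}$. Hence
\[
\|M_\E(n)\|_{\X \to \Bo^{\infty,0}(\Ufd_1)} = \|\E\|_{\X \to \Ufd_1} \qquad \text{for all } n \in \ZZ^+.
\]
Since $\E \neq 0_{\X \to \Ufd_1}$, this is a positive constant independent of $n$, so $M_\E(\cdot)$ certainly does not decay exponentially.

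Now apply Proposition \ref{p r=0} in its contrapositive form: since $D \neq 0_{\Ufd_2 \to \X}$ and $M_\E(\cdot)$ does not decay exponentially, it cannot be the case that $\r_\c(D, M_\E; \Bo^{\infty,0}) > 0$. Hence $\r_\c(D, M_\E; \Bo^{\infty,0}) = 0$, and since stability radii are nonnegative, the inequality $\r_\t \leq \r_\c$ from (\ref{e rc>rt>rnt}) yields $\r_\t(D, M_\E; \Bo^{\infty,0}) = 0$ as well. The main subtlety is simply making sure that the norm of $M_\E(n)$ in $\Bo^{\infty,0}(\Ufd_1)$ is computed correctly (independent of $n$), which is routine once one recalls that the exponential weight is trivial at $\ga = 0$; after that the conclusion is immediate from Proposition \ref{p r=0}.
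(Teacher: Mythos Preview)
Your proof is correct and follows essentially the same route as the paper: compute $\|M_\E(n)\| = \|\E\|_{\X\to\Ufd_1}$, observe this is a positive constant so $M_\E(\cdot)$ does not decay exponentially, and invoke Proposition~\ref{p r=0}. The only difference is that you explicitly dispose of the non-UES case first, which the paper glosses over; this is a minor (and arguably welcome) bit of extra care.
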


\begin{proof}
In this case, $\B = \Bo^{\infty,0}$. The function $M_\E ( \cdot)$ (which corresponds to $E (\cdot)$ of Proposition \ref{p r=0})
does not decay exponentially since
$ \| M_\E ( n ) \|_{\X \to \U_1} = \| P_{-n}^\T \E \|_{\X \to \B^{\infty,0} (\Ufd_1)}
 = \| \E \|_{\X \to \Ufd_1} $ is a positive constant. Proposition \ref{p r=0}
completes the proof.
\end{proof}

\subsection{An example of a perturbed non-positive system}
\label{ss ex nonpos}

Take $\X =\CC$ and consider the systems
\begin{equation} \label{e ex - 12 j per}
x(n+1) = - \sum_{j=0}^\infty 2^{-j} x (n-j) + \sum_{j=0}^{+\infty} \Delta (n,j) x (n-j) , \ \ n \geq 0 ,
\end{equation}
with uncertain complex coefficients $\Delta (n,j) \in \CC$, $n,j \in \ZZ^+$.
The problem is to find conditions on $\Delta (n,j)$ that ensure the UE stability of these systems
in $\X$ with respect to a certain phase space (by Remark \ref{r EM=SwrtBo}, such conditions guarantee also
the UE stability in the resolvent matrix sense).

We consider systems (\ref{e ex - 12 j per}) as perturbations of the convolution system
\begin{equation} \label{e ex - 12 j}
x(n+1) = - \sum_{j=0}^\infty 2^{-j} x (n-j) .
\end{equation}

\emph{First, consider stability properties of system (\ref{e ex - 12 j}).}
It is a  system of the type (\ref{e Keq}) with $K(j)= - 2^{-j}$.
The Z-transform of $K (\cdot)$ equals $\wh K (\zeta) = \frac{2}{\zeta-2}$.
The radius of convergence of $\wh K (\zeta)$ equals $R [\wh K] = 2$.
System (\ref{e ex - 12 j}) is defined on the phase spaces
\begin{equation} \label{ex - 12 j ph sp}
\B^{p,\ga} \text{ for all } -\infty < \ga < \ln 2, \ 1 \leq p \leq \infty,  \text{ and also on } \ \B^{1,\ln 2}.
\end{equation}

Recall that $X_K (\cdot)$ is the convolution kernel corresponding to the unstructured input-state operator $\IS_K $,
see (\ref{e IS K conv}), and that $X_K (\cdot)$ is connected with
the resolvent matrix $X_K (\cdot, \cdot)$ of (\ref{e ex - 12 j}) by $X_K (n,j) = X_K (n-j)$.
According to Lemma  \ref{l whK=whXK new}, the Z-transform of the function $X_K (\cdot)$ equals
\begin{equation} \label{e ex - 12 j wh X}
\wh X_K (\zeta) = [1-\zeta\wh K(\zeta)]^{-1} = \frac{2- \zeta}{2+\zeta}.
\end{equation}

Recovering the function $X_K (\cdot)$ from its Z-transform, one gets
$X(0) =1$ and $ X (j) = - (-\frac12)^{j-1} $ for $j \in \NN$. By (\ref{e IS K conv}), the explicit form of the
the unstructured input-state operator $\IS_K $ is
\begin{equation} \label{e ex -12 IS}
(\IS_K f) (n) = f(n-1) - \sum_{j=2}^{n} \left(-\frac12 \right)^{j-2} f (n-j) , \ \ \ (\IS_K f) (0) = 0 .
\end{equation}

We see that $X_K (\cdot)$ decays exponentially. In other words, \emph{
system (\ref{e ex - 12 j}) is UES in the resolvent matrix sense.}
By Theorem \ref{t UEScr ga>0} and Proposition \ref{p st emb}, \emph{system (\ref{e ex - 12 j}) is UES
in $\X$ w.r.t. each of the phase spaces of (\ref{ex - 12 j ph sp}).
For $\ga < \ln 2$, system (\ref{e ex - 12 j}) is also UES in $\X$ w.r.t. the spaces $\Bo^{\infty,\ga}$}, which are
isometrically embedded in $\B^{\infty,\ga}$.


\emph{Let us study stability radii of (\ref{e ex - 12 j}) under unstructured perturbations }
in a phase space $\B=\B^{p,\ga}$ assuming that either $0 < \ga < \ln 2$, $1 \leq p \leq \infty$, or
$ \ga = \ln 2$, $p=1$. We want to use the settings of Section \ref{s unstr} to calculate (or estimate)
the stability radii $\r_\c (K; \B)$ and $\r_\t (K; \B)$.

Clearly,
\[
\max_{|\zeta|=1} | [1 - \zeta  \wh K (\zeta)]^{-1} | = \max_{|\zeta|=1} \left| \frac{2- \zeta}{2+\zeta} \right| =
3 .
\]
By Corollary \ref{c unstr},
\[
\r_\c (K; \B^{p,\ga}) = \frac{(1-e^{-p\ga})^{1/p}}{3} .
\]

Time-varying stability radii $\r_\t$ can be easily calculated when $p=1,2,\infty$:
\begin{equation} \label{e ex -12 rt}
\frac{(1-e^{-p\ga})^{1/p}}{3} =  \r_\t (K; \B^{p,\ga}) =
(1-e^{-p\ga})^{1/p} \ \| \IS_K \|_{ \ll^p (\X) \to \ll^p ( \X) }^{-1} \ , \qquad p=1,2,\infty .
\end{equation}
Indeed, for $p=2$ this equality is provided immediately by Corollary \ref{c unstr}.
When $p=1$ or $p=\infty$, the norms of the unstructured input-state operator
can be calculated via (\ref{e ex -12 IS}):
\[
\| \IS_K \|_{ \ll^1 (\X) \to \ll^1 ( \X) } = \| \IS_K \|_{ \ll^\infty (\X) \to \ll^\infty ( \X) } = 3 .
\]
(The supremum of norms of $\IS_K f$ over the corresponding unit balls are
archived for suitable $f(\cdot)$ with alternating signs of $f(n)$.)
Hence,  $\max_{|\zeta|=1} | [1 - \zeta  \wh K (\zeta)]^{-1} |  = \| \IS_K \|_{ \ll^p (\X) \to \ll^p ( \X) }$,
 and therefore, (\ref{e r> unstr}) holds with equalities. This proves (\ref{e ex -12 rt}).

\emph{Finally, we derive stability conditions for (\ref{e ex - 12 j per}) in terms of coefficients}
using the obtained stability radii. To write system (\ref{e ex - 12 j per}) in the form (\ref{e unstr}),
we define the operators (actually, the functionals) $N(n):\B^{p,\ga} \to \CC$ by
$N(n) \vphi =  \sum_{j=0}^{+\infty} \ \Delta(n,j) \ \vphi^{[-j]}$. Then,
\begin{eqnarray}
\| N (n) \|_{\B^{p,\ga} \to \CC} & =  & \left( \sum_{j=0}^{+\infty} | \Delta(n,j) e^{j\ga}|^{p'}  \right)^{1/p'}
 \text{ when } 1<p\leq \infty,\\
\text{and } \| N (n) \|_{\B^{1,\ga} \to \CC} & = & \sup_{j \ge 0} | \Delta(n,j) e^{j\ga}|
\qquad \text{ when } p= 1,
\end{eqnarray}
where $p'$ is the H\"{o}lder conjugate of $p$, $1/p' + 1/p =1 $.

Combining the definition of $\r_\t (K; \B^{p,\beta})$ with (\ref{e ex -12 rt}), we see that
\emph{(\ref{e ex - 12 j per}) is UES in $\X$ w.r.t. $\B^{p,\beta}$ in each of the following cases}:\\
$p=1$, $0<\beta \leq \ln 2$, and
\begin{equation} 
\sup_{j \ge 0 } | \Delta(n,j) e^{j\beta}| < \frac{1-e^{-\beta}}{3} \ \  \text{ for all } n \ge 0 ;
\tag{N1}
\end{equation}
$p=2$, $0<\beta < \ln 2$, and
\begin{equation} 
\sum_{j=0}^{+\infty} | \Delta(n,j) e^{j\beta}|^2 < \frac{1-e^{-2\beta}}{9} \ \ \text{ for all } n \ge 0 ;
\tag{N2}
\end{equation}
$p=\infty$, $0<\beta < \ln 2$, and
\begin{equation} 
\sum_{j=0}^{+\infty} | \Delta(n,j) e^{j\beta}| < \frac{1}{3} \ \ \text{ for all } n \ge 0  .
\tag{N$\infty$}
\end{equation}

The continuous embedding $\B^{1,\delta} \subset \B^{2,\delta} \subset \B^{\infty,\delta}$ and
Proposition \ref{p st emb} imply also that, \emph{in the case $p=1$, $0<\beta < \ln 2$,
(\ref{e ex - 12 j per}) is UES in $\X$ w.r.t. $\B^{1,\beta}$ whenever any of the conditions
(N2) or (N$\infty$) is satisfied.}

Note that \emph{conditions (N1), (N2), and (N$\infty$) are independent,}
i.e., none of them implies another one.

Similarly, \emph{in the case $p=2$, $0<\beta < \ln 2$,
(\ref{e ex - 12 j per}) is UES in $\X$ w.r.t. $\B^{2,\beta}$ whenever (N$\infty$) is satisfied.}

The unstructured stability radii corresponding to $\B = \B^{\infty,0}$ and $\B = \Bo^{\infty,0}$ do not produce
stability tests since these radii are equal to $0$, see Corollary \ref{c unstr Bo}.
However, the continuous embedding argument allows one to obtain sufficient conditions of UE stability in $\X$ w.r.t.
$\B^{\infty,0}$ and $\Bo^{\infty,0}$, as well as w.r.t. $\B^{p,\ga}$ with $p \neq 1,2,\infty$.
In fact, embedding (\ref{e emB}), Proposition \ref{p st emb}, and the above results yield the following conditions
(since the produced conditions for the phase spaces $\B^{\infty,0}$ and $\Bo^{\infty,0}$ coincide, below
we give only $\B^{\infty,0}$ version).

\begin{proposition}
Let $0 \le \ga < \ln 2$. System (\ref{e ex - 12 j per}) is UES in $\X$ w.r.t. $\B^{p,\ga}$
if the condition (N1) is fulfilled for a certain $\beta \in (\ga,\ln 2]$ or if any of the conditions
(N2), (N$\infty$) is fulfilled for a certain $\beta \in (\ga,\ln 2)$.
\end{proposition}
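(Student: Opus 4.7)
The plan is a short, three-step reduction to the already-established sufficient conditions (N1), (N2), (N$\infty$), which give UE stability in $\X$ with respect to $\B^{1,\beta}$, $\B^{2,\beta}$, $\B^{\infty,\beta}$, respectively. The bridge is the continuous embedding (\ref{e emB}) combined with Proposition \ref{p st emb}.

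First, I would record the key consequence of (\ref{e emB}) I need: if $\beta > \ga \geq 0$ and $1 \le p \le \infty$, then
\[
 \B^{p,\ga} \ \subset \ \B^{\infty,\ga} \ \subset \ \B^{1,\beta} \ \subset \ \B^{2,\beta} \ \subset \ \B^{\infty,\beta},
\]
all continuously. Indeed, the middle inclusion is the one in (\ref{e emB}) with $\ga_0 = \ga < \beta = \ga_1$, while $\B^{p,\ga} \subset \B^{\infty,\ga}$ and $\B^{1,\beta} \subset \B^{2,\beta} \subset \B^{\infty,\beta}$ are direct from (\ref{e emB}) at fixed weight.

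Second, for each of the three conditions I just apply the corresponding result proved earlier in Section \ref{ss ex nonpos} at the exponent $\beta$: assuming (N1) with $\beta \in (\ga, \ln 2]$ gives UE stability in $\X$ w.r.t. $\B^{1,\beta}$; (N2) with $\beta \in (\ga, \ln 2)$ gives UE stability w.r.t. $\B^{2,\beta}$; and (N$\infty$) with $\beta \in (\ga, \ln 2)$ gives UE stability w.r.t. $\B^{\infty,\beta}$. Combining with the chain of embeddings above, the hypothesis $\B^{p,\ga} \subset \B^{s,\beta}$ needed for Proposition \ref{p st emb} is satisfied in every case ($s = 1, 2, \infty$ respectively), so the UE stability w.r.t. the larger space $\B^{s,\beta}$ transfers to UE stability in $\X$ w.r.t. $\B^{p,\ga}$.

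Third, for the $\B^{\infty,0}$ (equivalently $\Bo^{\infty,0}$) case referenced in the paragraph preceding the statement, the same argument goes through with $\ga=0$: the embedding $\B^{\infty,0} \subset \B^{1,\beta}$ for any $\beta \in (0, \ln 2]$ is exactly (\ref{e emB}) with $\ga_0=0$, and then Proposition \ref{p st emb} pushes UE stability from $\B^{1,\beta}$ (or $\B^{2,\beta}$, $\B^{\infty,\beta}$) down to $\B^{\infty,0}$. No separate argument is required.

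I do not foresee a real obstacle: all the analytic work was already done in deriving (N1)--(N$\infty$) from formula (\ref{e ex -12 rt}) and the radii $\r_\t(K;\B^{p,\beta}) = (1-e^{-p\beta})^{1/p}/3$. The only mildly delicate point is checking the strict inequality $\beta > \ga$ in (\ref{e emB}) (the embedding $\B^{p,\ga} \subset \B^{1,\ga}$ at equal weights fails), which is exactly why the proposition requires $\beta \in (\ga, \ln 2]$ or $(\ga, \ln 2)$ rather than a closed left endpoint.
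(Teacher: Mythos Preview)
Your proposal is correct and follows exactly the approach the paper itself indicates: the sentence immediately preceding the proposition states that ``embedding (\ref{e emB}), Proposition \ref{p st emb}, and the above results yield the following conditions,'' which is precisely your three-step reduction. Your chain of embeddings and the application of Proposition \ref{p st emb} are both accurate; the only superfluous remark is the parenthetical about $\B^{p,\ga} \subset \B^{1,\ga}$ failing, since the relevant obstruction is really at the step $\B^{\infty,\ga} \subset \B^{1,\beta}$, but this does not affect the argument.
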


These scales of stability tests have the following additional properties:
\begin{itemize}
\item[(i)] as before, none of the above conditions imply another one (even produced by a different $\beta$),
\item[(ii)] the constants in the right sides of (N1), (N2), and (N$\infty$) are sharp, more precisely,
for each of the conditions (N1), (N2), and (N$\infty$), there exist $\Delta (n,j) $  such that the equality holds in
the corresponding formula, but (\ref{e ex - 12 j per}) is not
UES in $\X$ w.r.t. any of phase spaces $\B^{p,\ga}$.
\item[(iii)] using Lemma \ref{l Q=wtQ}, the requirement 'for all $n \ge 0$' in (N1), (N2), and (N$\infty$) can be weakened to
'for all n large enough' .
\end{itemize}

Statement (i) can be easily seen by direct examination.

\emph{Let us prove (ii) for the case of (N$\infty$).}
Taking $\Delta (n,0) = -1/3$ for all $n \ge 0$, and $\Delta (n,j) = 0$ for all $j \ge 1$ and $n \ge 0$,
we see by straightforward calculations that the equality holds in (N$\infty$), that the convolution (time-invariant)
system (\ref{e ex - 12 j per})
is defined for all phase spaces of (\ref{ex - 12 j ph sp}), and that for system (\ref{e ex - 12 j per})
the condition (ii) of Theorem \ref{t UEScr ga>0} is not valid when $\zeta = -1 $.
Hence, (\ref{e ex - 12 j per}) is not UES in the resolvent matrix sense.
\emph{The equality holds in (N1)} if $\Delta (n,j) = - \frac{1-e^{-\beta}}{3} (-1)^{j} e^{-\beta j} $ for all $n$.
Though the corresponding convolution  system (\ref{e ex - 12 j per}) is defined on $\B^{1,\beta}$
and all the embedded phase spaces, it is not UES
in the resolvent matrix sense. Indeed, condition (ii) of Theorem \ref{t UEScr ga>0} is not valid again for $\zeta = -1 $.
\emph{Taking $\Delta (n,j) = - \frac{1-e^{-2\beta}}{3} (-1)^{j} e^{-2\beta j} $, we see that the equality holds in (N2),}
but the system it is not UES in the resolvent matrix sense by the same reason as before.



\end{document}